\documentclass{elsarticle}

\makeatletter
\def\ps@pprintTitle{%
 \let\@oddhead\@empty
 \let\@evenhead\@empty
 \def\@oddfoot{}%
 \let\@evenfoot\@oddfoot}
\makeatother
\usepackage{graphicx}
\usepackage{hyperref}
\usepackage{amsfonts}
\usepackage{epstopdf}
\usepackage{amssymb,amsmath,epsfig,multirow}
\usepackage[displaymath, mathlines,running]{lineno}
\usepackage{mathtools} 
\usepackage{color}
\usepackage{lineno,hyperref}
\usepackage{rotating}
\usepackage{makeidx}
\usepackage{lscape}
\usepackage{float}
\usepackage{epsfig}
\usepackage{amsmath,amssymb}
\usepackage{enumitem}
\usepackage{epic,eepic}
\usepackage{overpic}
\usepackage{color}
\usepackage{amsfonts}
\usepackage{graphicx}
\usepackage{enumitem}
\usepackage{pgf,tikz,pgfplots}
\pgfplotsset{compat=1.15}
\usepackage{mathrsfs}

\usepackage{algorithm}
\usepackage{algpseudocode}
\usepackage[normalem]{ulem} 

\usetikzlibrary{arrows}
\newtheorem{defn}{Definition}[section]
\newtheorem{theorem}{Theorem}[section]
\newtheorem{corollary}[theorem]{Corollary}
\newtheorem{proposition}[theorem]{Proposition}
\newtheorem{remark}{Remark}[section]
\newenvironment{proof}{\smallskip\noindent{{\it Proof.}}\hskip \labelsep}%
           {\hfill\penalty10000\raisebox{-.09em}{\large\bf\rm $\blacksquare$}\par\medskip}

\newtheorem{lemma}[theorem]{Lemma}

\DeclareMathOperator*{\argmin}{arg\,min}

\begin{document}
\begin{frontmatter}

\title{Multivariate {\color{black}Data-dependent} Partition of Unity {\color{black} based on} Moving Least Squares method}\tnotetext[label1]{This research has been supported by grant PID2023-146836NB-I00 (funded by MCIN/AEI/10.13039/501100011033) {\color{black} and CIAICO/2024/089}.}


\author[UV]{Inmaculada Garcés}
\ead{inmaculada.garces@uv.es}
\author[UPCT]{Juan Ruiz-Álvarez}
\ead{juan.ruiz@upct.es}
\author[UV]{Dionisio F. Y\'a\~nez}
\ead{dionisio.yanez@uv.es}
\date{Received: date / Accepted: date}


\address[UV]{Departamento de Matem\'aticas. Universidad de Valencia, Valencia (Spain).}
\address[UPCT]{Departamento de Matem\'atica Aplicada y Estad\'istica. Universidad  Polit\'ecnica de Cartagena, Cartagena (Spain).}

\begin{abstract}
Data approximation is essential in fields such as geometric design, numerical PDEs, and curve modeling. Moving Least Squares (MLS) is a widely used method for data fitting; however, its accuracy degrades in the presence of discontinuities, often resulting in spurious oscillations similar to those associated with the Gibbs phenomenon. This work extends the integration of MLS with the Weighted Essentially Non-Oscillatory (WENO) method and with an innovative partition of unity approach to higher dimensions. We propose a {\color{black}data-dependent} operator using the novel Non-Linear Partition of Unity \textcolor{black}{based on} Moving Least Squares method in \( \mathbb{R}^n \), which improves accuracy near discontinuities and maintains high-order accuracy in smooth regions. We demonstrate some theoretical properties of the method and perform numerical experiments to validate its effectiveness.
\end{abstract} 
\begin{keyword}
MLS \sep PUM \sep WENO \sep high accuracy approximation \sep improved \textcolor{black}{adaptation} to discontinuities \sep 41A05 \sep 41A10 \sep 65D05 \sep 65M06 \sep 65N06.
\end{keyword}

\end{frontmatter}

\section{Introduction} \label{intro}



The approximation of data is a fundamental task arising in diverse areas such as computer-aided geometric design, numerical solutions of partial differential equations, and curve and surface modeling {\color{black}\cite{F, GNS, GS, H, W}}. While many approximation techniques perform well under smooth data assumptions, challenges arise when the data contain discontinuities. 

The considered problem is that of constructing an accurate approximation of an unknown function $f \colon \mathbb{R}^n \to \mathbb{R}$, based on the data values $\{f(\mathbf{x}_i)\}_{i=1}^N$ corresponding to a set of points distributed arbitrarily within an open and bounded domain $\Omega \subset \mathbb{R}^n$, where $\{\mathbf{x}_i\}_{i=1}^N \subset \Omega$. Among the established techniques, the Moving Least Squares (MLS) method is a widely adopted and robust strategy for data fitting, with applications spanning statistics and applied mathematics (see, e.g., \cite{L}). In addition, the Partition of Unity method (PUM or PU method) subdivides the domain into multiple smaller subdomains, applying the Radial Basis Function method (RBF) locally within each of them {\color{black} \cite{B,carretal,C2, cavo, CRL,CRO,DMP,W2}}. The final approximation is then obtained as a convex combination of the interpolants. When applied to data generated from a continuous function, these methods yield satisfactory results. As an illustrative case, Fig.~\ref{figintro} \textbf{(a)} displays the approximation obtained for the Franke’s function:
\begin{equation} \label{Franke}
{\color{black}f_1}(x, y) = \frac{3}{4} e^{ -\frac{(9x - 2)^2}{4} - \frac{(9y - 2)^2}{4} } + \frac{3}{4} e^{ -\frac{(9x + 1)^2}{49} - \frac{(9y + 1)}{10} } + \frac{1}{2} e^{ -\frac{(9x - 7)^2}{4} - \frac{(9y - 3)^2}{4} }  - \frac{1}{5} e^{ -(9x - 4)^2 - (9y - 7)^2 }.
\end{equation} 
Here, the MLS method is applied to a uniform grid of data points defined as $X=\{(i/n_d,j/n_d)\, \colon \, i,j=0,\dots,n_d \}$, where the discretization parameter is set to $n_d = 64$. Nonetheless, in the presence of discontinuities, this method suffers from well-known artifacts, such as the Gibbs phenomenon, that reduce its approximation quality (see \cite{ALRY}). This behavior can be clearly observed in Figs.~\ref{figintro} \textbf{(b)} and~\textbf{(c)}, where the outcome of applying the MLS method to a piecewise smooth function is illustrated\textcolor{black}{, namely}




\begin{equation}\label{frank}
{\color{black}{f_2}}(x, y) =
\begin{cases}
{\color{black} f_1}(x, y) + 1, & (x - 0.5)^2 + (y - 0.5)^2 \leq 0.25^2, \\
{\color{black} f_1}(x, y),     & \text{otherwise}.
\end{cases}
\end{equation}

As the PU method, commonly applied to RBF, is a convex linear combination of RBF interpolators, if we adjust PU to MLS to obtain PU-MLS, the expected results for data with discontinuities are similar to those shown in Figure \ref{figintro}.

\begin{figure}[H]
\centering
    \begin{tabular}{ccc} 
         \includegraphics[width=0.3\hsize]{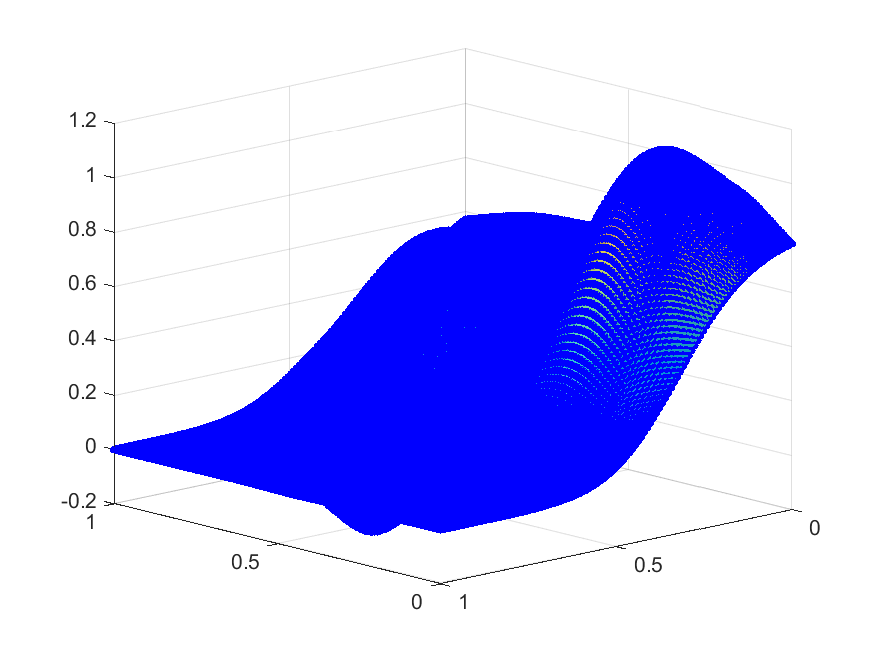} & \includegraphics[width=0.3\hsize]{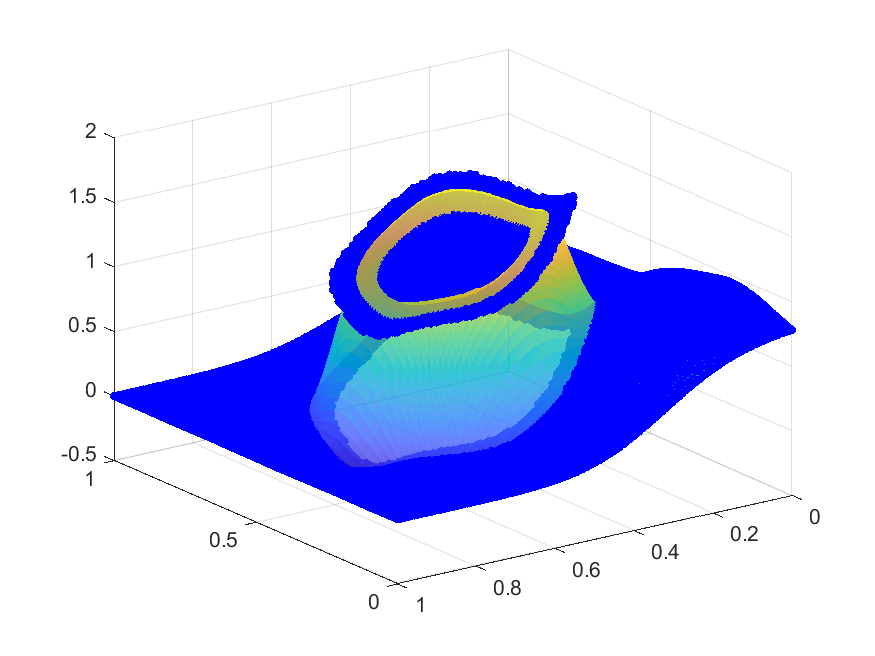} & \includegraphics[width=0.3\hsize]{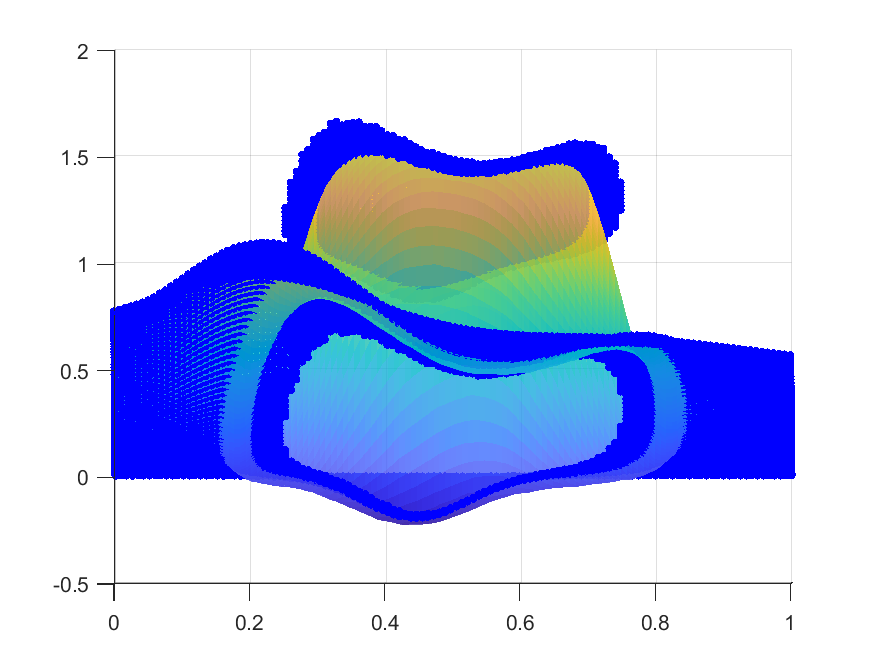}\\
         \textbf{(a)} & \textbf{(b)}  & \textbf{(c)}\\
 \end{tabular}
 \caption{\scriptsize{\textbf{(a)} MLS approximation of Franke’s function given in Eq.~\eqref{Franke}; \textbf{(b)} MLS approximation of the function ${\color{black} f_2}$ defined in Eq.~\eqref{frank}; \textbf{(c)} Rotated view of plot \textbf{(b)}.}}
 \label{figintro}
 \end{figure}

In this work, we extend previous efforts to integrate MLS with the Weighted Essentially Non-Oscillatory (WENO) method (see, e.g., \cite{BBMZ}) and a novel partition of unity framework, originally explored in one dimension in \cite{GRRY}, to higher-dimensional settings. We propose a {\color{black} data-dependent approximation operator using the Partition of Unity based on Moving Least Squares method in $\mathbb{R}^n$ (DDPU-MLS)}. This approach enhances accuracy near discontinuities while preserving high-order convergence in smooth regions. \textcolor{black}{Some new techniques have been developed in this way (see, e.g., \cite{DLNS, RRY}).} We establish theoretical properties of the method and provide numerical evidence demonstrating its performance and reliability.
This paper is divided in six sections: Firstly, we show some important properties and results of the MLS method in Section \ref{sec2}. In Section \ref{sec3}, we introduce the PUM framework and establish the theoretical results concerning its order of accuracy. We begin by recalling the fundamentals of \textcolor{black}{RBF} interpolation, and then we integrate this method with the one seen above in a linear context. The new {\color{black} DDPU}-MLS method in $\mathbb{R}^n$ will be presented in Section \ref{sec4}, together with several theoretical results that establish its main properties. Particular attention is devoted to the Gibbs phenomenon, which typically arises near discontinuities in the linear case but is mitigated in the non-linear formulation. We also analyze the approximation order in regions where the underlying data are smooth. Numerical experiments, reported in Section \ref{num}, are conducted to validate the theoretical findings. Finally, concluding remarks and perspectives are summarized in Section \ref{conc}. 



\section{\textcolor{black}{The MLS method}} \label{sec2}
 
The MLS method is a powerful algorithm designed to approximate a function $f \colon \mathbb{R}^n \to \mathbb{R}$ from values known at a scattered set of points $\{\mathbf{x}_i\}_{i=1}^N \subset \mathbb{R}^n$. This technique is also known in statistics as \textit{local polynomial regression} \cite{F}, and has applications in denoising, image processing, subdivision schemes, and the numerical solution of partial differential equations (see, e.g., \cite{carretal, davidlevin2}).


We consider the point values $ \{f(\mathbf{x}_i)\}_{i=1}^N$ as a sampling of the function $f$ at the sites $\mathbf{x}_i$, and the goal is to approximate $f(\mathbf{x})$ at a specific location $\mathbf{x} \in \mathbb{R}^n$. This value is calculated as a linear combination of the known data:
\begin{equation}\label{eq1}
    \mathcal{Q}(f)(\mathbf{x}) = \sum_{i=1}^N a_i(\mathbf{x}) f(\mathbf{x}_i),
\end{equation}
where the coefficients $\{a_i(\mathbf{x})\}_{i=1}^N$ depend on the evaluation point $\mathbf{x}$ and are computed by requiring exact reproduction of functions in a finite-dimensional polynomial space.

A weight function is now defined in terms of a function $w:[0,\infty)\to\mathbb{R}$ satisfying the following properties:
\begin{itemize}
    {\color{black}    
    \item $w$ is non-negative, compactly supported, and monotonically decreasing on $[0,c]$, with $\mathrm{supp}(w) \subseteq [0,c]$ for some constant $c>0$.
    \item $w$ attains its maximum at the origin, that is, $w(0) = \max_{r \ge 0} w(r)$, and $w$ is bounded.}
    \item There exists $p \in \mathbb{N}$ such that $w \in \mathcal{C}^p([0,\infty))${\color{black} ,
    where $w$ is understood to be extended by zero outside its support}.
\end{itemize}

Throughout this work, we assume that the set of nodes is quasi-uniformly (or uniformly) distributed. We then introduce the concept of the fill distance (see, for example, \cite{F,W}), which quantifies how well the points in $X=\{\mathbf{x}_i\}_{i=1}^N$ cover the domain $\Omega$. It is defined as

\begin{equation} \label{filldistance}
   h=h_{X, \Omega} = \sup_{\mathbf{x}\in \Omega} \min_{\mathbf{x}_j \in X} \|\mathbf{x}-\mathbf{x}_j\|_2. 
\end{equation}

We now proceed to the fundamental definition of what is referred to as a local polynomial reproduction.

\begin{defn}[\cite{W}] \label{LPR} 
Let $X = \{\mathbf{x}_i\}_{i=1}^N \subseteq \Omega$  be a set of points. We say that a family of functions $a_i : \Omega \to \mathbb{R}$, $i=1, \hdots, N$, 
achieves a local polynomial reproduction of degree $m$ on $\Omega$ if there exist constants $h_0, C_1, C_2 > 0$ such that
\begin{enumerate}
    \item[(1)] $\sum_{i=1}^N a_i(\mathbf{x}) p(\mathbf{x}_i) = p(\mathbf{x}), \, \forall p \in {\color{black} \Pi_m(\Omega)}$,
\item[(2)] $\sum_{i=1}^N |a_i(\mathbf{x})| \le C_1, \, \forall \mathbf{x} \in \Omega,$ 
\item[(3)] $a_i(\mathbf{x}) = 0 \text{ if } \|\mathbf{x} - \mathbf{x}_i\|_2 > C_2 h_{X,\Omega} \text{ and } \mathbf{x} \in \Omega,$
\end{enumerate}
for every set $X$ satisfying $h_{X,\Omega} \le h_0$.
\end{defn}

The coefficients $\{a_i\}_{i=1}^N$ can also be interpreted as generating a local polynomial reproduction; or, less rigorously, one may say that the quasi-interpolant $\mathcal{Q}(f)$ in Eq. \eqref{eq1} achieves a local polynomial reproduction.
\begin{theorem}[\cite{W}]
Suppose that $\Omega \subseteq \mathbb{R}^n$ is bounded. Let us define {\color{black} $\widetilde{\Omega}$} as the closure of $\bigcup_{x \in \Omega} B(x, C_2 h_0)$. {\color{black} Assume that $f \in C^{m+1}(\widetilde{\Omega})$}. Let us define $\mathcal{Q}(f)(\mathbf{x}) = \sum_{i=1}^N a_i f(\mathbf{x}_i)$, where $\{a_i\}_{i=1}^N$ is a local polynomial reproduction of order $m$ on $\Omega$. Then there exists a constant $C > 0$ that depends only on the constants in Definition \ref{LPR} such that
\[
|f(\mathbf{x}) - \mathcal{Q}(f)(\mathbf{x})| \le C h_{X,\Omega}^{m+1} |f|_{C^{m+1}({\color{black} \widetilde{\Omega}})},
\]
for all $X$ with $h_{X,\Omega} \le h_0$. 
The semi-norm appearing on the right-hand side is given by $|f|_{C^{m+1}({\color{black} \widetilde{\Omega}})} := \max_{|\boldsymbol{\alpha}| = m+1} \| D^{\boldsymbol{\alpha}} f \|_{\infty, {\color{black} \widetilde{\Omega}}}$, where
\[
\boldsymbol{\alpha} = (\alpha_1,\alpha_2,\dots,\alpha_n)^T \in \mathbb{N}_0^n, 
\quad |\boldsymbol{\alpha}| = \alpha_1+\alpha_2+\hdots+\alpha_n, 
\quad D^{\boldsymbol{\alpha}} f = \frac{\partial^{|\boldsymbol{\alpha}|} f}{\partial x_1^{\alpha_1}\partial x_2^{\alpha_2}{\color{black} \cdots} \, \partial x_n^{\alpha_n}}.
\] 
\end{theorem}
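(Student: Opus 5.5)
The plan is the standard Taylor-expansion argument for quasi-interpolants with local polynomial reproduction. Fix $\mathbf{x} \in \Omega$ and a point set $X$ with $h := h_{X,\Omega} \le h_0$. Let $p$ be the Taylor polynomial of $f$ of degree $m$ centred at $\mathbf{x}$:
\[
p(\mathbf{y}) = \sum_{|\boldsymbol{\alpha}| \le m} \frac{D^{\boldsymbol{\alpha}} f(\mathbf{x})}{\boldsymbol{\alpha}!} (\mathbf{y}-\mathbf{x})^{\boldsymbol{\alpha}}.
\]
Then $p \in \Pi_m$ and $p(\mathbf{x}) = f(\mathbf{x})$.

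By property (1) of Definition \ref{LPR}, $\sum_i a_i(\mathbf{x}) p(\mathbf{x}_i) = p(\mathbf{x}) = f(\mathbf{x})$. Hence
\[
f(\mathbf{x}) - \mathcal{Q}(f)(\mathbf{x}) = \sum_{i=1}^N a_i(\mathbf{x}) \bigl( p(\mathbf{x}_i) - f(\mathbf{x}_i) \bigr).
\]
By property (3), only indices with $\|\mathbf{x} - \mathbf{x}_i\|_2 \le C_2 h$ contribute. For those indices the segment $[\mathbf{x}, \mathbf{x}_i]$ lies in the ball $B(\mathbf{x}, C_2 h) \subseteq B(\mathbf{x}, C_2 h_0) \subseteq \widetilde{\Omega}$. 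This inclusion is the reason $\widetilde{\Omega}$ is defined as it is: it makes Taylor's theorem with integral or Lagrange remainder legitimate even though $\Omega$ itself need not be convex.

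Taylor's theorem along that segment gives
\[
|f(\mathbf{x}_i) - p(\mathbf{x}_i)| \le \sum_{|\boldsymbol{\alpha}| = m+1} \frac{|D^{\boldsymbol{\alpha}} f(\boldsymbol{\xi}_i)|}{\boldsymbol{\alpha}!} |(\mathbf{x}_i - \mathbf{x})^{\boldsymbol{\alpha}}| \le \Bigl(\sum_{|\boldsymbol{\alpha}|=m+1} \frac{1}{\boldsymbol{\alpha}!}\Bigr) (C_2 h)^{m+1} |f|_{C^{m+1}(\widetilde{\Omega})}.
\]
Here we use $|(\mathbf{y})^{\boldsymbol{\alpha}}| \le \|\mathbf{y}\|_2^{|\boldsymbol{\alpha}|}$. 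The multinomial sum equals $n^{m+1}/(m+1)!$, a constant depending only on $n$ and $m$.

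Finally, property (2) bounds $\sum_i |a_i(\mathbf{x})| \le C_1$, so
\[
|f(\mathbf{x}) - \mathcal{Q}(f)(\mathbf{x})| \le C_1 C_2^{m+1} \frac{n^{m+1}}{(m+1)!}\, h^{m+1} |f|_{C^{m+1}(\widetilde{\Omega})}.
\]
The constant is uniform in $\mathbf{x}$ and in $X$ (given $h \le h_0$), and depends only on $C_1$, $C_2$ and the fixed $m$ and $n$.

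The only delicate step is the geometric one: checking that the remainder points $\boldsymbol{\xi}_i$ lie in $\widetilde{\Omega}$ and that $f$ is smooth enough along the whole segment. This is exactly why $h \le h_0$ is assumed and why the enlarged closed set $\widetilde{\Omega}$ appears. The rest is routine bookkeeping of the multinomial constant.
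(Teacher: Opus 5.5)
Your proposal is correct and follows the standard argument behind this result (Theorem 3.8 in Wendland's book, which the paper quotes without proof): take the degree-$m$ Taylor polynomial of $f$ at $\mathbf{x}$, use reproduction (1) to write the error as $\sum_i a_i(\mathbf{x})\bigl(p(\mathbf{x}_i)-f(\mathbf{x}_i)\bigr)$, restrict to $\|\mathbf{x}-\mathbf{x}_i\|_2\le C_2 h_{X,\Omega}$ via (3), bound the Lagrange remainder on $\widetilde{\Omega}$, and close with (2). The only cosmetic difference is that Wendland first inserts an arbitrary $p\in\Pi_m$ via the triangle inequality and obtains the factor $1+C_1$ before choosing the Taylor polynomial, whereas your use of $p(\mathbf{x})=f(\mathbf{x})$ yields the slightly sharper factor $C_1$.
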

Therefore, the minimization problem is:

\begin{equation*} 
p_{\mathbf{x}}=\argmin_{p\in\Pi_m}\sum_{i=1}^N (f(\mathbf{x}_i)-p(\mathbf{x}_i))^2w\left(\frac{\|\mathbf{x}-\mathbf{x}_i\|_2}{h}\right).
\end{equation*}
The following definition, adapted from \cite{L}, formalizes the MLS approximation.
\begin{defn}[MLS pointwise approximation]
Let \( \{\mathbf{x}_i\}_{i=1}^N \subset \mathbb{R}^n \), and \( P = \mathrm{span}\{p_j\}_{j=1}^J \). Given a weight function \( w_i(\mathbf{x}):=w\left(\frac{\|\mathbf{x}-\mathbf{x}_i\|_2}{h}\right) \geq 0 \), the MLS approximation of \( f(\mathbf{x}) \) is defined as \( p_{\mathbf{x}}(\mathbf{x}) \), where \( p_{\mathbf{x}} \in P \) minimizes, among all $p\in P$, the weighted least-squares error functional
\begin{equation}\label{problema1}
\sum_{i=1}^N (f(\mathbf{x}_i) - p(\mathbf{x}_i))^2 w_i(\mathbf{x}).
\end{equation}
\end{defn}
That is, \( f(\mathbf{x}) \) is approximated by evaluating at \( \mathbf{x} \) the polynomial \( p_{\mathbf{x}} \) that best fits the data \( \{f(\mathbf{x}_i)\}_{i=1}^N \) around \( \mathbf{x} \) in a weighted least-squares sense. To solve \eqref{problema1}, we use Theorem 4.3 in \cite{W}.

\begin{theorem}[\cite{W}] \label{coeff}
Suppose that for each $\mathbf{x}\in \mathbb{R}^n$ the set $\{\mathbf{x}_i\}_{i=1}^N \subset \mathbb{R}^n$ is $\Pi_m(\mathbb{R}^n)$-unisolvent (the zero polynomial is the only polynomial of $\Pi_m(\mathbb{R}^n)$ that vanishes in all of them). In this situation, problem \eqref{problema1} has a unique solution. The solution $\mathcal{Q}(f)(\mathbf{x}) = p_{\mathbf{x}}(\mathbf{x})$ can be expressed in the form $\mathcal{Q}(f)(\mathbf{x}) = \sum_{i=1}^N a^{*}_i(\mathbf{x})f(\mathbf{x}_i),$ with coefficients $a^{*}_i(\mathbf{x})$ defined as the solution to the minimization of the quadratic functional $\sum_{i=1}^N a_i(\mathbf{x})^2\frac{1}{w_i(\mathbf{x})}$ subject to the interpolation conditions $\sum_{i=1}^N a_i(\mathbf{x}) p(\mathbf{x}_i) = p(\mathbf{x}),\, \forall p \in \Pi_m(\mathbb{R}^n)$. 
\end{theorem}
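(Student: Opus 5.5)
The plan is to write the problem in matrix form and show that both characterizations produce the same linear functional of the data. Fix $\mathbf{x}$ and a basis $\{p_j\}_{j=1}^J$ of $\Pi_m(\mathbb{R}^n)$. Let $P\in\mathbb{R}^{N\times J}$ have entries $P_{ij}=p_j(\mathbf{x}_i)$. Let $W(\mathbf{x})=\mathrm{diag}(w_1(\mathbf{x}),\dots,w_N(\mathbf{x}))$, $\mathbf{f}=(f(\mathbf{x}_1),\dots,f(\mathbf{x}_N))^T$ and $\mathbf{p}(\mathbf{x})=(p_1(\mathbf{x}),\dots,p_J(\mathbf{x}))^T$. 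Because $1/w_i$ appears in the dual functional, I restrict both problems to the index set $I(\mathbf{x})=\{i: w_i(\mathbf{x})>0\}$. On the dual side this means forcing $a_i=0$ when $w_i=0$, with the usual convention $a_i^2/0=\infty$ unless $a_i=0$. I read the unisolvency hypothesis as applying to $\{\mathbf{x}_i\}_{i\in I(\mathbf{x})}$, which is its natural meaning. Then $P_I$ has full column rank and $G:=P_I^TW_IP_I$ is symmetric positive definite.

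\textbf{Primal problem.} Write $p=\sum_j c_jp_j$. The functional \eqref{problema1} becomes $(\mathbf{f}_I-P_I\mathbf{c})^TW_I(\mathbf{f}_I-P_I\mathbf{c})$. It is strictly convex in $\mathbf{c}$ because $G$ is positive definite, so it has the unique minimizer $\mathbf{c}^*=G^{-1}P_I^TW_I\mathbf{f}_I$. Hence $p_{\mathbf{x}}(\mathbf{x})=\mathbf{p}(\mathbf{x})^TG^{-1}P_I^TW_I\mathbf{f}_I$, which is linear in the data. Set $\mathbf{a}^\sharp_I=W_IP_IG^{-1}\mathbf{p}(\mathbf{x})$ and $a^\sharp_i=0$ for $i\notin I$.

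\textbf{Dual problem.} Minimize $\mathbf{a}_I^TW_I^{-1}\mathbf{a}_I$ subject to $P_I^T\mathbf{a}_I=\mathbf{p}(\mathbf{x})$; these constraints are exactly the reproduction conditions written in the basis. The feasible set is nonempty because $\mathbf{a}^\sharp$ satisfies $P_I^T\mathbf{a}^\sharp_I=G\,G^{-1}\mathbf{p}(\mathbf{x})=\mathbf{p}(\mathbf{x})$. The objective is strictly convex, so the minimizer is unique. To identify it, take any feasible $\mathbf{a}$ and put $\mathbf{d}=\mathbf{a}_I-\mathbf{a}^\sharp_I$, so that $P_I^T\mathbf{d}=0$. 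The cross term is
\[
2\,\mathbf{d}^TW_I^{-1}\mathbf{a}^\sharp_I=2\,\mathbf{d}^TP_IG^{-1}\mathbf{p}(\mathbf{x})=0.
\]
This gives the expansion
\[
\mathbf{a}_I^TW_I^{-1}\mathbf{a}_I=\mathbf{a}^{\sharp T}_IW_I^{-1}\mathbf{a}^\sharp_I+\mathbf{d}^TW_I^{-1}\mathbf{d},
\]
so $\mathbf{a}^*=\mathbf{a}^\sharp$. This orthogonality argument can replace a Lagrange multiplier computation. Finally $\sum_i a_i^*(\mathbf{x})f(\mathbf{x}_i)=\mathbf{p}(\mathbf{x})^TG^{-1}P_I^TW_I\mathbf{f}_I=p_{\mathbf{x}}(\mathbf{x})$, which proves the claimed representation. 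The result is also independent of the chosen basis, since a change of basis transforms $P$, $G$ and $\mathbf{p}$ consistently.

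\textbf{Main obstacle.} The only delicate step is the handling of zero weights, since $w$ is compactly supported and the dual functional divides by $w_i$. I plan to address it explicitly through the restriction to $I(\mathbf{x})$ described above, and to note that unisolvency must be understood on that index set so that $G$ is invertible. The remaining steps are routine linear algebra.
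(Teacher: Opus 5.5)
Your proposal is correct and follows the route the paper relies on. The paper gives no argument of its own beyond citing \cite{W}, but it records exactly your closed form \eqref{solucionMLS_pointwise}, and in the remark after \eqref{construccionE_pointwise} it handles vanishing weights as you do, by passing to the nodes with $w_i(\mathbf{x})>0$ and using unisolvency of that subset; this is the hypothesis actually needed, since unisolvency of all of $\{\mathbf{x}_i\}_{i=1}^N$ would not suffice for compactly supported $w$. Your orthogonality identity for the dual problem replaces the usual Lagrange-multiplier step and gives feasibility, uniqueness and the identification $\mathbf{a}^*_I=W_IP_IG^{-1}\mathbf{p}(\mathbf{x})$ in one step.
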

We can express the operator \(\mathcal{Q}(f)\) in matrix form as
\begin{equation}\label{solucionMLS_pointwise}
\mathcal{Q}(f)(\mathbf{x}) = (f(\mathbf{x}_1), \ldots, f(\mathbf{x}_N)) D E (E^T D E)^{-1} (p_1(\mathbf{x}), \ldots, p_J(\mathbf{x}))^T,
\end{equation}
where \(E \in \mathbb{R}^{N \times J}\) and \(D \in \mathbb{R}^{N \times N}\) are defined as
\begin{equation}\label{construccionE_pointwise}
E_{i,j} = p_j(\mathbf{x}_i), \quad 1 \leq i \leq N, \; 1 \leq j \leq J, \qquad 
D = \mathrm{diag}(w_1(\mathbf{x}), \ldots, w_N(\mathbf{x})).
\end{equation}
{\color{black} We note that although some weights $w_i(\mathbf{x})$ may vanish, the matrix $E^T D E$ remains invertible. Indeed, only the nodes $\mathbf{x}_i$ such that $w_i(\mathbf{x})>0$ contribute to the matrix, and by the local polynomial reproduction assumption (Definition~\ref{LPR}), these nodes form a unisolvent set for the polynomial space of degree at most $m$. As a consequence, $E^T D E$ is symmetric positive definite and therefore invertible.}

Assuming that \(\mathrm{rank}(E) = J\), the expression above provides the unique solution for the local approximation at the evaluation point \(\mathbf{x}\). This approach further assumes that the data are sufficiently smooth to be well approximated by a polynomial within a neighborhood of each point \(\mathbf{x}\). We refer to this setting as the \emph{linear} MLS approach. Based on this formulation, the separation distance and quasi-uniformity can then be defined using Eq.~\eqref{filldistance}.

\begin{defn}[\cite{W}]
 Consider a finite set of points $X = \{\mathbf{x}_1, \dots, \mathbf{x}_N\} \subset \Omega$. The separation distance of $X$ is given by
 $$q_X:=\frac{1}{2} \min_{i\neq j} \|\mathbf{x}_i-\mathbf{x}_j\|_2.$$
 Moreover, the set $X$ is considered quasi-uniform, characterized by a constant {\color{black} $c_{\text{qu}} \ge 1$}, whenever
 \begin{equation}\label{acot}
    q_X \leq h_{X, \Omega} \leq c_{\text{qu}} q_X. 
 \end{equation}
\end{defn}

The separation distance provides the largest possible radius for two balls centered at different data locations to be essentially disjunct. The notion of quasi-uniformity should be understood in relation to multiple collections of data sites. Specifically, we consider a sequence of sets whose elements \textcolor{black}{gradually cover} the domain $\Omega$ more densely. In this setting, it is crucial that condition \eqref{acot} holds for every set in the sequence, using the same constant $c_{\text{qu}}$.



We assume that the domain $\Omega \subset \mathbb{R}^n$ fulfills the interior cone condition, following the formulation in \cite{F, RRY, W}. This means that there exist an angle $\theta \in (0, \pi/2)$ and a radius $r > 0$ for which the following holds: for every point $\mathbf{x} \in \Omega$, there is a unit direction vector $\boldsymbol{\xi}(\mathbf{x})$ such that the cone
$$C=\{\mathbf{x}+\lambda \mathbf{y}\, \colon \, \mathbf{y}\in \mathbb{R}^n, \, \|\mathbf{y}\|_2=1, \, \mathbf{y}^T\boldsymbol{\xi}(\mathbf{x})\geq \cos\theta, \, \lambda \in [0,r]\}$$
is contained in $\Omega$ (see Figure \ref{cone}).
\begin{figure}[H]
\centering 
\includegraphics[width=0.35\hsize]{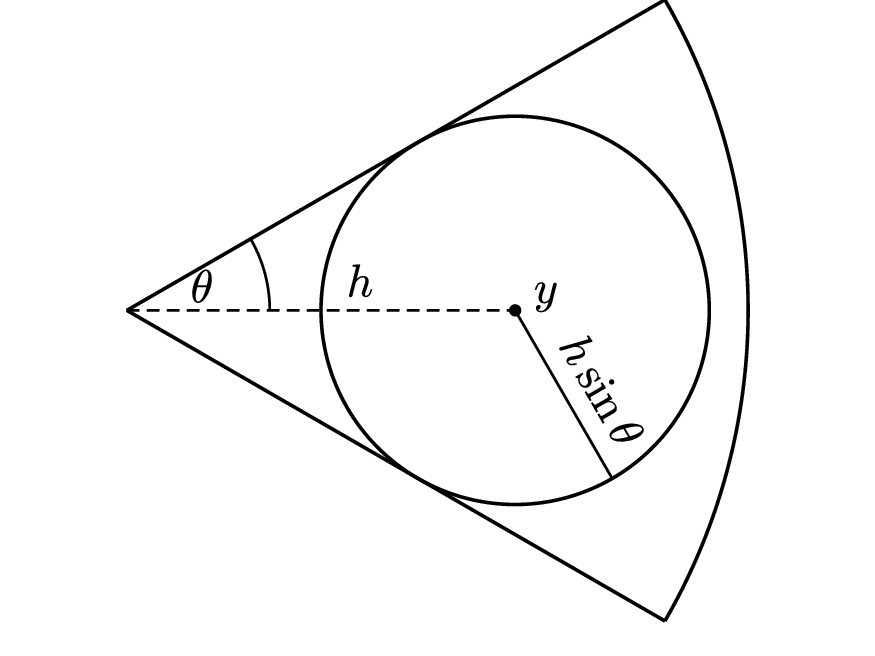}
\caption{Ball in a cone.}
\label{cone}
\end{figure}

The lemma established by Wendland in \cite{W} shows that any ball fulfills the interior cone condition. 
This property becomes relevant in our numerical experiments, particularly when constructing a specific version of the partition of unity method.
\begin{lemma}[\cite{RRY, W}]
Any ball with radius $\delta > 0$ fulfills the interior cone condition, defined by the same radius $\delta$ and an angle of $\theta = \pi/3$.
\end{lemma}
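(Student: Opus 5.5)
Let $B=B(\mathbf{x}_0,\delta)$ be a ball of radius $\delta>0$. The plan is to give, for each $\mathbf{x}\in B$, an explicit unit direction $\boldsymbol{\xi}(\mathbf{x})$ and show that the cone with angle $\theta=\pi/3$ and radius $\delta$ in that direction stays inside $B$. The natural choice is to point toward the center: $\boldsymbol{\xi}(\mathbf{x})=(\mathbf{x}_0-\mathbf{x})/\|\mathbf{x}_0-\mathbf{x}\|_2$ when $\mathbf{x}\neq\mathbf{x}_0$, and any unit vector when $\mathbf{x}=\mathbf{x}_0$. The center case is immediate: every cone point satisfies $\|\mathbf{x}_0+\lambda\mathbf{y}-\mathbf{x}_0\|_2=\lambda\le\delta$. (If the ball is open, endpoints at distance exactly $\delta$ need care; I would work with the closed ball, or with any radius strictly less than $\delta$, which is what the paper's use of the lemma requires.)

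For $\mathbf{x}\neq\mathbf{x}_0$, set $\mathbf{d}=\mathbf{x}-\mathbf{x}_0$ with $0<\|\mathbf{d}\|_2=t<\delta$. A generic cone point is $\mathbf{z}=\mathbf{x}+\lambda\mathbf{y}$, with $\|\mathbf{y}\|_2=1$, $\lambda\in[0,\delta]$ and $\mathbf{y}^T\boldsymbol{\xi}\ge\cos(\pi/3)=1/2$. Since $\mathbf{y}^T\mathbf{d}=-t\,\mathbf{y}^T\boldsymbol{\xi}\le -t/2$, expanding the square gives
\[
\|\mathbf{z}-\mathbf{x}_0\|_2^2=t^2+2\lambda\,\mathbf{y}^T\mathbf{d}+\lambda^2\le t^2-\lambda t+\lambda^2 .
\]
It therefore suffices to show $g(\lambda):=\lambda^2-\lambda t+t^2\le\delta^2$ for all $\lambda\in[0,\delta]$ and $t\in[0,\delta]$.

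This is the key step. The function $g$ is convex in $\lambda$, so its maximum over $[0,\delta]$ is attained at an endpoint. At $\lambda=0$ we get $g(0)=t^2\le\delta^2$. At $\lambda=\delta$ we get $g(\delta)=\delta^2-\delta t+t^2=\delta^2-t(\delta-t)\le\delta^2$, because $0\le t\le\delta$. Hence $\|\mathbf{z}-\mathbf{x}_0\|_2\le\delta$, so the cone lies in the closed ball.

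The only real subtlety is the open-versus-closed boundary issue mentioned above. The choice $\theta=\pi/3$ is exactly where the bound works: it makes $\cos\theta=1/2$, which produces the term $-\lambda t$, and that term is precisely what keeps $g(\delta)\le\delta^2$ for every $t\in[0,\delta]$.
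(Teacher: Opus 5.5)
Your proof is correct. The paper gives no proof of its own and simply cites Wendland, whose argument is the same as yours: point the cone from $\mathbf{x}$ toward the center, and note that the critical configuration is the equilateral triangle formed by a boundary point, the center, and a cone endpoint at distance $\delta$. Your convexity bound $\lambda^2-\lambda t+t^2\le\delta^2$ makes that picture rigorous.

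Your open-versus-closed caveat is also accurate, and slightly sharper than the paper's statement. For the open balls $\Omega_k=B(\tilde{\mathbf{x}}_k,\varepsilon(h))$ used in the paper, only the center fails, since its cone reaches the boundary at $\lambda=\delta$. Every other point satisfies your bound strictly. The lemma should therefore be read for closed balls, or with any cone radius $r<\delta$.
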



Having established these preliminary results, we can now determine the corresponding order of accuracy, following the statements of Theorem~4.7 and Corollary~4.8 in \cite{W}.

\begin{theorem}[\cite{W}] \label{orderL}
Assume that $\Omega \subset \mathbb{R}^n$ is a compact set fulfilling the interior cone condition for some angle $\theta \in (0, \pi/2)$ and radius $r > 0$. Fix an integer $m \in \mathbb{N}$, and let $h_0, C_1,$ and $C_2$ be positive constants depending only on $m$, $\theta$, and $r$. Suppose that $X=\{\mathbf{x}_1, \dots, \mathbf{x}_N\} \subseteq \Omega$ satisfies \eqref{acot} and $h_{X,\Omega} \leq h_0$. Denote by ${\color{black} \widetilde{\Omega}}$ the closure of $\bigcup_{\mathbf{x} \in \Omega} B(\mathbf{x}, 2 C_2 h_0)$. Assume that the local basis functions $a_i^*(\mathbf{x})$ are constructed as in Th. \ref{coeff} and have supports contained in $B(\mathbf{x}, \delta)$, i.e.\ they depend on $h_{X,\Omega}$ through $\delta:= 2C_2 h_{X,\Omega}$. Then, for all $\mathbf{x} \in \Omega$, the following properties hold:
\begin{enumerate}
    \item[(1)] $\sum_{i=1}^N a_i^*(\mathbf{x}) \, p(\mathbf{x}_i) = p(\mathbf{x}),\, \forall p \in P$,
    \item[(2)] $\sum_{i=1}^N |a_i^*(\mathbf{x})| \le \widetilde{C}_1$, 
    \item[(3)] $a_i^*(\mathbf{x}) = 0 \, \, \text{if } \, \, \|\mathbf{x} - \mathbf{x}_i\|_2 > \widetilde{C}_2 h_{X,\Omega}$,
\end{enumerate}
for certain constants $\widetilde{C}_1, \widetilde{C}_2 > 0$. 
\end{theorem}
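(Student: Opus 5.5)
Property (1) comes straight from Theorem~\ref{coeff}, property (3) from the compact support of $w$, and property (2) from a norming-set argument on the cone, together with the variational characterization of the $a_i^*$. Throughout, the scale is $\delta=2C_2h_{X,\Omega}$, so $w_i(\mathbf{x})=w(\|\mathbf{x}-\mathbf{x}_i\|_2/\delta)$ (after rescaling $w$ so that $c=1$).

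\emph{Step 1: properties (1) and (3).} Fix $\mathbf{x}\in\Omega$. Property (3) is immediate: $w_i(\mathbf{x})=0$ whenever $\|\mathbf{x}-\mathbf{x}_i\|_2>\delta$. In the matrix form \eqref{solucionMLS_pointwise} the $i$-th coefficient carries the factor $w_i(\mathbf{x})$, so $a_i^*(\mathbf{x})=0$ for $\|\mathbf{x}-\mathbf{x}_i\|_2>2C_2h_{X,\Omega}$. Hence $\widetilde C_2=2C_2$.

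For property (1), we need the nodes in $B(\mathbf{x},\delta)$, and in particular those in $B(\mathbf{x},\delta/2)$ where the weight is bounded below, to be $\Pi_m$-unisolvent. This comes from the norming set result underlying Theorem~4.7 in \cite{W}. Because $\Omega$ satisfies the interior cone condition and $h_{X,\Omega}\le h_0$, with $h_0$ small in terms of $m,\theta,r$, the set $X\cap C(\mathbf{x})$ is a norming set for $\Pi_m$ on a cone $C(\mathbf{x})\subset B(\mathbf{x},C_2h)\cap\Omega$. That is,
\[
\|p\|_{L_\infty(C(\mathbf{x}))}\le 2\max_{\mathbf{x}_j\in X\cap C(\mathbf{x})}|p(\mathbf{x}_j)| \qquad \text{for all } p\in\Pi_m .
\]
In particular these nodes are unisolvent. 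Theorem~\ref{coeff} then applies, and the constraint set of that minimization is exactly property (1).

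\emph{Step 2: the Lebesgue constant, property (2).} Since the constraints are satisfied by any local polynomial reproduction, I first build a comparison family $\tilde a_j(\mathbf{x})$. I take it from the norming-set/Hahn--Banach construction: it is supported on $X\cap C(\mathbf{x})$, reproduces $\Pi_m$, and satisfies $\sum_j|\tilde a_j|\le 2$. On its support the weights satisfy $w_j(\mathbf{x})\ge w(1/2)>0$, because those nodes lie within $C_2h=\delta/2$ of $\mathbf{x}$. (We may assume $w>0$ on $[0,1)$, which is the standard condition.) By the minimality in Theorem~\ref{coeff},
\[
\sum_i \frac{a_i^*(\mathbf{x})^2}{w_i(\mathbf{x})}\le \sum_j \frac{\tilde a_j(\mathbf{x})^2}{w_j(\mathbf{x})}\le \frac{4}{w(1/2)} .
\]
Cauchy--Schwarz over the at most $\#\bigl(X\cap B(\mathbf{x},\delta)\bigr)$ nonzero terms then gives
\[
\sum_i|a_i^*(\mathbf{x})|\le \Bigl(\sum_i \frac{a_i^{*2}}{w_i}\Bigr)^{1/2}\Bigl(\sum_{\|\mathbf{x}_i-\mathbf{x}\|\le\delta} w_i\Bigr)^{1/2}\le \frac{2}{\sqrt{w(1/2)}}\,\bigl(w(0)\,\#(X\cap B(\mathbf{x},\delta))\bigr)^{1/2}.
\]

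\emph{Step 3: counting, the main obstacle.} It remains to bound $\#(X\cap B(\mathbf{x},\delta))$ independently of $h$, and this is where quasi-uniformity \eqref{acot} enters. The balls $B(\mathbf{x}_i,q_X)$ are pairwise disjoint. Those with $\mathbf{x}_i\in B(\mathbf{x},\delta)$ all lie in $B(\mathbf{x},\delta+q_X)$. Comparing volumes gives
\[
\#\le\left(\frac{\delta+q_X}{q_X}\right)^n\le (2C_2c_{\text{qu}}+1)^n ,
\]
a constant depending only on $m,\theta,r,c_{\text{qu}},n$. So $\widetilde C_1$ is uniform in $h$.

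The delicate part is Step 2. There I must make sure that the norming-set constants, and hence $C_2$ and $h_0$, depend only on $m,\theta,r$, and that the comparison family really sits where $w$ is bounded away from zero. For both points I would rely on the explicit constants of Theorem~3.8 and Corollary~4.8 in \cite{W}.
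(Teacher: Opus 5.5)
Your proposal is correct and is essentially the standard proof of Theorem~4.7 in \cite{W}, which the paper cites without reproducing. That proof obtains (1) by construction from Theorem~\ref{coeff} and (3) with $\widetilde{C}_2=2C_2$ from the support of the weight. It obtains (2) by comparing the minimal quadratic form with the norming-set reproduction of Wendland's Theorem~3.14, which is supported in $B(\mathbf{x},\delta/2)$ and has $\ell_1$-norm at most $2$, followed by Cauchy--Schwarz and the volume count $\#(X\cap B(\mathbf{x},\delta))\le(1+2C_2c_{\text{qu}})^n$. You were right to add the hypothesis that the rescaled $w$ is positive on $[0,1/2]$: Wendland assumes it, but the weight properties listed in the paper do not imply it, and without it both the comparison step and the unisolvency of the positively weighted nodes can fail.
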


\begin{corollary}[\cite{W}]
Under the assumptions of Theorem~\ref{orderL}, one can identify an explicitly computable constant $c > 0$ for which, for every function $f \in \mathcal{C}^{m+1}({\color{black} \widetilde{\Omega}})$, the approximation error satisfies the following bound:
\begin{equation*}
    \|f-\mathcal{Q}(f)\|_{\infty, \Omega} \leq ch^{m+1}_{X, \Omega}|f|_{\mathcal{C}^{m+1}({\color{black} \widetilde{\Omega}})}.
\end{equation*}
\end{corollary}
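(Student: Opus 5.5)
The plan is to combine Theorem~\ref{orderL} with the general error estimate for local polynomial reproductions, i.e.\ the theorem of \cite{W} stated right after Definition~\ref{LPR}. Theorem~\ref{orderL} says that the MLS coefficients $a_i^*$ of Theorem~\ref{coeff} satisfy the three properties of Definition~\ref{LPR}, with constants $\widetilde{C}_1,\widetilde{C}_2$ that depend only on $m$, $\theta$, $r$ (and on $c_{\text{qu}}$ and $w$). Once this is granted, the bound follows by a Taylor-expansion argument, and the constant can be written out explicitly in terms of $\widetilde{C}_1$, $\widetilde{C}_2$, $m$ and $n$.

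Fix $\mathbf{x}\in\Omega$ and let $T_{\mathbf{x}}$ be the Taylor polynomial of degree $m$ of $f$ centred at $\mathbf{x}$. It lies in $\Pi_m$, so property (1) gives
\[
f(\mathbf{x})-\mathcal{Q}(f)(\mathbf{x}) = \sum_{i=1}^N a_i^*(\mathbf{x})\bigl(T_{\mathbf{x}}(\mathbf{x}_i)-f(\mathbf{x}_i)\bigr).
\]
By property (3), only indices with $\|\mathbf{x}-\mathbf{x}_i\|_2\le \widetilde{C}_2 h_{X,\Omega}$ contribute. For these indices I use the integral (or Lagrange) form of the remainder along the segment $[\mathbf{x},\mathbf{x}_i]$:
\[
|f(\mathbf{x}_i)-T_{\mathbf{x}}(\mathbf{x}_i)| \le \sum_{|\boldsymbol{\alpha}|=m+1}\frac{\|D^{\boldsymbol{\alpha}}f\|_\infty}{\boldsymbol{\alpha}!}\,|(\mathbf{x}_i-\mathbf{x})^{\boldsymbol{\alpha}}| \le \frac{n^{m+1}}{(m+1)!}\,(\widetilde{C}_2 h_{X,\Omega})^{m+1}\,|f|_{\mathcal{C}^{m+1}(\widetilde{\Omega})}.
\]
The multinomial identity gives $\sum_{|\boldsymbol{\alpha}|=m+1}1/\boldsymbol{\alpha}!=n^{m+1}/(m+1)!$, and I use $|(\mathbf{y})^{\boldsymbol{\alpha}}|\le\|\mathbf{y}\|_2^{m+1}$. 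Property (2) then yields the claim with
\[
c=\widetilde{C}_1\,\widetilde{C}_2^{\,m+1}\,\frac{n^{m+1}}{(m+1)!}.
\]
Taking the supremum over $\mathbf{x}\in\Omega$ finishes the argument.

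The only delicate point is to justify that the segment $[\mathbf{x},\mathbf{x}_i]$ lies in $\widetilde{\Omega}$, so that the derivatives appearing in the remainder are controlled by the seminorm on $\widetilde{\Omega}$. The segment is contained in $B(\mathbf{x},\widetilde{C}_2 h_{X,\Omega})$, and balls are convex. So it suffices that $\widetilde{C}_2 h_{X,\Omega}\le 2C_2h_0$. This holds because the support of $a_i^*(\mathbf{x})$ lies in $B(\mathbf{x},\delta)$ with $\delta=2C_2h_{X,\Omega}\le 2C_2h_0$, so one may take $\widetilde{C}_2=2C_2$. This is exactly why $\widetilde{\Omega}$ was defined with radius $2C_2h_0$ in Theorem~\ref{orderL}. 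I expect this bookkeeping of the enlargement radius, rather than the estimate itself, to be the main obstacle, together with checking that $\widetilde{C}_1$ is indeed independent of $X$; the latter is supplied by Theorem~\ref{orderL} and is not reproved here.
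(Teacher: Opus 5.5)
Your proof is correct and is essentially the standard argument behind this result, which the paper does not prove but cites from Wendland (Corollary~4.8). Theorem~\ref{orderL} makes the MLS coefficients a local polynomial reproduction, the support hypothesis $\delta=2C_2h_{X,\Omega}$ lets you take $\widetilde{C}_2=2C_2$ (which is exactly why $\widetilde{\Omega}$ uses radius $2C_2h_0$), and the Taylor-remainder argument of the error theorem following Definition~\ref{LPR} then gives the bound. The differences are only in the constant: centring $T_{\mathbf{x}}$ at $\mathbf{x}$ gives you $\widetilde{C}_1$ instead of $1+\widetilde{C}_1$, and your bound $|(\mathbf{y})^{\boldsymbol{\alpha}}|\le\|\mathbf{y}\|_2^{m+1}$ gives the factor $n^{m+1}$ where $n^{(m+1)/2}$ would also suffice, neither of which affects the statement.
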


However, when the data exhibits discontinuities or strong gradients, linear MLS may introduce artifacts such as Gibbs oscillations. To address this, the WENO (Weighted Essentially Non-Oscillatory) method has been developed \cite{S}. WENO builds non-linear combinations of local polynomial approximations and includes smoothness indicators to detect non-smooth behavior in the data. We will look at this in more detail in Section \ref{sec4}.



\section{\textcolor{black}{The PU method}}\label{sec3}

The \textcolor{black}{PUM}, often referred to simply as the PU method, was introduced to facilitate efficient numerical computations, particularly in the context of meshfree approximation techniques. The main idea involves breaking down a complex problem into several smaller, manageable parts while maintaining the desired approximation accuracy. Let $\Omega \subset \mathbb{R}^n$ be an open and bounded domain, which is covered by a collection of $M$ overlapping subdomains $\Omega_j$ such that \cite{C2, RRY, W2}:

\begin{itemize}
    \item The collection of subdomains fully covers the domain $\Omega$, that is,
    $$\Omega \subseteq \bigcup_{j=1}^M \Omega_j.$$
    
    \item There exists a uniform constant $K$ ensuring that each point $\mathbf{x} \in \Omega$ belongs to no more than $K$ subdomains $\Omega_j$.
    
    \item There exist a positive constant $C_r$ and an angle $\theta \in (0, \pi/2)$ such that each intersection $\Omega_j \cap \Omega$ fulfills an interior cone condition characterized by the angle $\theta$ and the radius $r = C_r h_{X,\Omega}$.
    
    \item The local fill distance $h_{X_j,\Omega_j}$, where $X_j = \Omega_j \cup X$, is uniformly controlled by the global fill distance $h_{X,\Omega}$.
\end{itemize}


Such a configuration is referred to as a regular covering of $(\Omega, X)$, see \cite{W}. Based on this covering, we construct a set of continuous, non-negative, compactly supported functions $\{w_j\}_{j=1}^M,$ with $w_j \, \colon \, \Omega_j \to \mathbb{R}$, satisfying
\[
\sum_{j=1}^M w_j(\mathbf{x}) = 1 ,\, \forall \mathbf{x}\in \Omega \quad \text{ and } \quad \text{supp}(w_j) \subseteq \Omega_j.
\]
This family of functions is called a partition of unity. The global approximation is then defined as the weighted combination 
\[
\mathcal{Q}_{\text{PU}}(f)(\mathbf{x}) = \sum_{j=1}^M w_j (\mathbf{x}) \mathcal{Q}_j(f)(\mathbf{x}).
\]
Consider a bounded domain $\Omega \subseteq \mathbb{R}^n$, and let $\{\Omega_j\}_{j=1}^M$ denote a collection of open and bounded subsets satisfying $\Omega \subseteq \bigcup_{j=1}^M \Omega_j$.

\begin{defn}[\cite{W}]
Consider a family of nonnegative functions $\{w_j\}_{j=1}^M$, where each $w_j \in \mathcal{C}^k(\mathbb{R}^n)$. We say that this family forms a $k$-stable partition of unity associated with the covering $\{\Omega_j\}_{j=1}^M$ whenever the following properties are satisfied for every index $j = 1, \dots, M$:
\begin{enumerate}
    \item The support of each function is contained in its corresponding set: \quad $\operatorname{supp}(w_j) \subseteq \Omega_j.$ 
    \item The functions form a partition of unity on $\Omega$:\quad $\sum_{j=1}^M w_j \equiv 1 \quad \text{on } \Omega.$ 
    \item For each multi-index $\alpha \in \mathbb{N}_0^n$ satisfying $|\alpha| \leq k$, there exists a constant $C_\alpha > 0$ such that
    \[
    \|D^\alpha w_j\|_{L^\infty(\Omega_j)} \leq \frac{C_\alpha}{\delta_j^{|\alpha|}}, \quad \text{for all } 1 \leq j \leq M \quad \text{and} \quad \delta_j = \mathrm{diam}(\Omega_j) = \sup_{x,y \in \Omega_j} \|x - y\|_2.
    \]
\end{enumerate}
\end{defn}


The PU method divides the global domain into a set of smaller overlapping subdomains, on each of which the RBF approach is locally applied. Next, we are going to integrate this method into the MLS method, seen in Section \ref{sec2}.

\subsection{\textcolor{black}{The RBF method}}
The RBF interpolation method has been the subject of an extensive study in the literature (see, for example, \textcolor{black}{\cite{B, C2, F, W}}). In the present work, we shall primarily adopt the notation established in \cite{cavo}. The RBF method constructs an approximation of the target function by expressing it as a linear combination of radial basis functions centered at a given set of data points, denoted by $X=\{\mathbf{x}_i\}_{i=1}^N \subset \Omega$. Each point $\mathbf{x}_i$ is associated with a function value $f_i = f(\mathbf{x}_i)$, where $f \, \colon \, \Omega \to \mathbb{R}$ represents an unknown function sampled at the nodes. The goal of the scattered data interpolation problem is to determine an operator 
$\mathcal{Q}(f)\, \colon \, \Omega \to \mathbb{R}$ that exactly reproduces the known data at the corresponding locations, that is, 

$$\mathcal{Q}(f)(\mathbf{x}_i)=f(\mathbf{x}_i), \quad i=1, \dots, N.$$
Our objective is to minimize the distance between $\mathcal{Q}(f)$ and $f$ with respect to a chosen norm. To begin, we introduce a radial basis function $\Phi \colon \Omega \to \mathbb{R}$, defined through a univariate function $\phi \colon [0, \infty[ \to \mathbb{R}$ such that
$$\Phi(\mathbf{x}) = \phi(\|\mathbf{x}\|_2), \quad \forall \mathbf{x} \in \Omega {\color{black} ,}$$
where $\| \cdot \|_2$ denotes the Euclidean norm in $\mathbb{R}^n$. We define $\Phi_i(\mathbf{x})=\phi(\|\mathbf{x}-\mathbf{x}_i\|_2), \, i=1, \dots, N$ and employ a radial basis function expansion to express the interpolation problem:
\begin{equation}\label{interpolant}
    \mathcal{Q}(f)(\mathbf{x})=\sum_{i=1}^N c_i \Phi_i(\mathbf{x}) = \sum_{i=1}^N c_i \phi(\|\mathbf{x}-\mathbf{x}_i\|_2), \quad \mathbf{x}\in \Omega,
\end{equation}
where $c_i$ are the coefficients determined by the interpolation system $A\mathbf{c}=\mathbf{f}$, with
\begin{equation*}
    A=\begin{bmatrix}
        \phi(\|\mathbf{x}_1-\mathbf{x}_1\|_2) &  \phi(\|\mathbf{x}_1-\mathbf{x}_2\|_2) & \dots &  \phi(\|\mathbf{x}_1-\mathbf{x}_N\|_2)\\
        \phi(\|\mathbf{x}_2-\mathbf{x}_1\|_2) &  \phi(\|\mathbf{x}_2-\mathbf{x}_2\|_2) & \dots &  \phi(\|\mathbf{x}_2-\mathbf{x}_N\|_2)\\ 
        \vdots & \vdots & \ddots & \vdots\\
         \phi(\|\mathbf{x}_N-\mathbf{x}_1\|_2) &  \phi(\|\mathbf{x}_N-\mathbf{x}_2\|_2) & \dots &  \phi(\|\mathbf{x}_N-\mathbf{x}_N\|_2)\\
    \end{bmatrix}, \, \mathbf{c}=\begin{bmatrix}
        c_1\\
        c_2\\
        \vdots\\
        c_N
    \end{bmatrix}, \, \mathbf{f}= \begin{bmatrix}
    f_1\\
    f_2\\
    \vdots\\
    f_N
    \end{bmatrix}.
\end{equation*}

The matrix $A$ is evidently symmetric. {\color{black} In order to guarantee the uniqueness of the solution of the associated linear system, it is necessary to ensure that $A$ is positive definite. This property depends on the choice of the radial basis function $\phi$, which is required to satisfy suitable positive definiteness conditions. To this end, we recall the following definition (see \cite{F}).}

\begin{defn}[\cite{F}] 
 A real-valued and continuous function $\Phi : \mathbb{R}^n \to \mathbb{R}$ is defined to be positive definite over $\mathbb{R}^n$ if, for every collection of $N$ distinct points $\{\mathbf{x}_i\}_{i=1}^N \subset \mathbb{R}^n$ and for any coefficient vector $c = [c_1, \ldots, c_N] \in \mathbb{R}^N$, the inequality below is satisfied:
 \begin{equation}\label{posdef}
     \sum_{j=1}^N \sum_{i=1}^N c_j c_i \Phi_i(\mathbf{x}_j) \geq 0.
 \end{equation}
 Moreover, $\Phi$ is said to be strictly positive definite on $\mathbb{R}^n$ if the expression in \eqref{posdef} equals zero only when $\mathbf{c=0}$.
\end{defn}

Several choices of the function $\phi$ are known in the literature to yield strictly positive definite kernels. A selection of commonly used examples is reported in Table~\ref{tabla1nucleos} (see, e.g., \cite{F}). In this table, the cutoff function $(\cdot)_+ : \mathbb{R} \to \mathbb{R}$ is defined as
 \begin{equation*}
 (x)_+=\begin{cases}
 x, & x\geq 0,\\
 0, & x<0.
 \end{cases}
 \end{equation*}

\begin{table}[H]
\centering
\begin{tabular}{lll}
\hline
$\phi(  \,r)$ & RBF & Denoted as:                                                         \\ \hline
$e^{-  r^2}$ & Gaussian $\mathcal{C}^\infty$ & G                                          \\
$\left(1 + r^2 \right)^{-1/2}$ & Inverse MultiQuadratic $C^\infty$ & IMQ  \\
$e^{-  r}$ & Mat\'ern $\mathcal{C}^0$ & M0 \\
$e^{-  r} \left( 1 +   r \right)$ & Mat\'ern $\mathcal{C}^2$ & M2 \\
$e^{-  r} \left( 3 + 3  r +   r^2 \right)$ & Mat\'ern $\mathcal{C}^4$ & M4 \\
$(1 -   r)^2_+$ & Wendland $\mathcal{C}^0$ & W0 \\
$(1 -   r)^4_+ \left( 4  r + 1 \right)$ & Wendland $\mathcal{C}^2$ & W2 \\
$(1 -   r)^6_+ \left( 35  r^2 + 18  r + 3 \right)$ & Wendland $\mathcal{C}^4$ & W4 \\ \hline
\end{tabular}
\caption{Selected radial basis functions (RBFs).}\label{tabla1nucleos}
\end{table}

A remarkable feature of the interpolant introduced in Eq.~(9) lies in its smoothness, as it can be represented through a linear mixture of smooth basis functions $\Phi_i$. Consequently, the continuity property of the interpolation operator is inherited from that of the radial function $\phi(\|\cdot\|_2)$.



\subsection{Integrating MLS with the Partition of Unity method in $\mathbb{R}^n$}

We suppose an open domain $\Omega \subseteq \mathbb{R}^n$, some data points $\{\mathbf{x}_i\}_{i=1}^N$ with $\mathbf{x}_i=(x^i_1, \dots, x^i_n)$ their components and $h$ the fill distance, and some associated set $\{f_i = f(\mathbf{x}_i)\}_{i=1}^N$ with $f : \Omega \to \mathbb{R}$ an unknown function. Let $\mathbf{x} \in \mathbb{R}^n$ be a point of the domain, and we suppose that the compact support of the weight function $w$ is of size a constant $c$. We consider that $\{\tilde{\mathbf{x}}_k\}_{k=1}^M \subset \mathbb{R}^n$ are some center points with {\color{black} $\tilde{\mathbf{x}}_k = (\tilde{x}_1^k, \dots, \tilde{x}_n^k)$} their components and call 
\[
\Omega_k = {\color{black} B}(\tilde{\mathbf{x}}_k, {\color{black}\varepsilon(h)})=\{ \mathbf{x}\in \mathbb{R}^n : \|\mathbf{x}-\tilde{\mathbf{x}}_k\|_2<{\color{black}\varepsilon(h)} \}, \quad k = 1, \dots, M,
\] 
where $M$ denotes the number of subdomains employed in the partition of unity {\color{black}(in general, $M$ is not required to
coincide with $N$)}, and \( {\color{black}\varepsilon(h)} > 0 \) determines the radius of the balls.

The MLS problem for each \( k \) in $\mathbb{R}^n$ is:
\[
p_k = \underset{p \in \Pi_d}{\arg\min} \sum_{\mathbf{x}_i \in \Omega_k} \left( f(\mathbf{x}_i) - p(\mathbf{x}_i) \right)^2 w\left({\color{black}\mu_k} \frac{\|\mathbf{x}-\mathbf{x}_i\|_2}{h}\right),
\]
{\color{black}where $\mu_k>0$ is a constant (similar to the shape parameter, see \cite{W})}. 

The set of centers $\{\tilde{\mathbf{x}}_k\}_{k=1}^M$ must be chosen so as to satisfy the following conditions:
\begin{enumerate}
    \item The union of all subdomains should cover the entire domain $\Omega$, that is,
$$\Omega\subseteq \bigcup_{k=1}^M\Omega_k.$$
\item The subdomains $\Omega_k$ are required to overlap slightly.
\item Each subdomain must contain more data points than the number of polynomial terms, i.e.,
$$|\Omega_k\cap\{\mathbf{x}_i\}_{i=1}^N|> \binom{d+n}{n},\quad \forall k=1,\hdots,M.$$
\item For each point $\mathbf{x} \in \mathbb{R}^n$, the count of subdomains that include $\mathbf{x}$ is limited by a global constant $K_1$, that is,
$$\sum_{k=1}^M \chi_{\Omega_k}(\mathbf{x}) \leq K_1, \quad \text{for all}\,\,\mathbf{x}\in \mathbb{R}^n.$$
\end{enumerate}
Thus, a partition of unity method can be designed 
\begin{equation}\label{PUM}
\mathcal{Q}_\text{PU}(f)(\mathbf{x})=\sum_{k=1}^M \theta_k (\mathbf{x})p_k(\mathbf{x}),
\end{equation}
being
$$\theta_k(\mathbf{x})=\frac{\delta_k(\mathbf{x})}{\sum_{j=1}^M\delta_j(\mathbf{x})},\quad \text{with} \quad \delta_k(\mathbf{x})=w\left({\color{black}\mu_k}\frac{\|\mathbf{x}-\tilde{\mathbf{x}}_k\|_2}{h}\right).$$



\section{\textcolor{black}{The DDPU-MLS method}} \label{sec4}


In order to introduce non-linearity in our method, we are going to use a well-known tool, WENO. The WENO scheme has become widely recognized as an effective approach for data reconstruction, particularly in problems involving discontinuities (see, e.g.,~\cite{S}). It builds several polynomial approximations on different stencils and combines them using non-linear weights. \textcolor{black}{For the sake of clarity and simplicity of exposition, we briefly recall the
main ideas of the WENO methodology in the one-dimensional setting.} Consider a function $f(x)$ whose value we wish to approximate at {\color{black} the mid-point of the interval $(x_{i-1}, x_i)$, denoted by $x_{i-\frac{1}{2}}$}. The WENO reconstruction of $f$ at {\color{black}$x_{i-\frac{1}{2}}$} can be expressed as
$$\hat{f}({\color{black}x_{i-\frac{1}{2}}})=\sum_{k=0}^{r-1} \beta_k p_k({\color{black}x_{i-\frac{1}{2}}}),$$
where $r$ denotes the number of stencils, $p_k$ are the local polynomial approximations associated with each stencil, and $\beta_k$ are the corresponding non-linear weights. These weights are defined as
$$\beta_k=\frac{\alpha_k}{\sum_{l=0}^{r-1} \alpha_l}, \quad \text{with } \alpha_k=\frac{C_k}{(\epsilon+I_k)^t},$$
where $C_k$ denotes the linear weights, $\epsilon$ is a small positive parameter introduced to prevent division by zero, and $t$ is generally assigned the value $2$ to improve precision within smooth areas. The smoothness indicators $I_k$ are defined by:

\begin{equation*}
I_k = \sum_{l=1}^{r-1} \int_{x_{i-1/2}}^{x_{i+1/2}} \Delta x^{2l-1} \left( \frac{d^l}{dx^l} {\color{black} p}_k(x) \right)^2 dx,
\end{equation*}
where $\Delta x$ represents the grid spacing. These values mark whether one node is close to the discontinuity or not.

{\color{black} In this section, we briefly review the classical WENO methodology developed in \cite{AMR}, which introduces a genuinely multivariate, non-separable two-dimensional WENO interpolation as an extension of the one-dimensional scheme, while preserving the same order of accuracy in smooth regions without relying on tensor-product constructions. This approach was later improved in \cite{MRSY} by constructing a general multivariate WENO interpolant with progressive order of accuracy. 

We consider the mid-point of the hypercube where we want to approximate the function
$\mathbf{x}^{*}=(x^*_1,\hdots,x^*_n)\in \prod_{j=1}^n [x^{(i_j-1)}_{j},x^{(i_j)}_{j}],$ with $\mathbf{x}^{*}=\mathbf{x}^{*}_{\frac{1}{2}}$. The construction follows closely the one-dimensional case. We design a convex combination of lower-degree interpolants of degree $r$. Let $\mathbf{k}=(k_1,\hdots, k_n)$ and define
$$p_0^{2r-1}(\mathbf{x}^{*}_{\frac{1}{2}})=\sum_{\mathbf{k}\in \{0,1,\hdots,r-1\}^n} C^r_{ \mathbf{k}}p_{\mathbf{k}}^r(\mathbf{x}^{*}_{\frac{1}{2}}),$$
where $C^r_{\mathbf{k}}=\prod_{j=1}^n C^r_{k_j}$ are the optimal weights, , and $C^r_{k_j}$, for $k_j = 0,\hdots,r-1$ and $j = 1,\hdots,n$, denote the corresponding one-dimensional optimal weights, whose explicit formula is given in \cite{ABM}: 
\begin{equation}\label{opt_w}
{C}_{k}^r=\frac{1}{2^{2r-1}}
\binom{2r}{2k+1}
,  \quad k=0,\hdots, r-1.
\end{equation}
The polynomials $p_{ \mathbf{k}}^r$ interpolate at the nodes
\begin{equation*}
\begin{split}
\mathcal{S}_{\mathbf{k}}^{r}&=\{x^{(i_1+k_1-r)}_{1}, \hdots, x^{(i_1+k_1)}_{1}\}\times\{x^{(i_2+k_2-r)}_{2}, \hdots, x^{(i_2+k_2)}_{2}\}\times\hdots\times\{x^{(i_n+k_n-r)}_{n}, \hdots, x^{(i_n+k_n)}_{n}\}=\prod_{j=1}^n\mathcal{S}_{k_j}^r.
\end{split}
\end{equation*}
The optimal weights are then replaced by non-linear ones using the following formula
\begin{equation}\label{omegand}
\beta^r_{\mathbf{k}}=\frac{\alpha^r_{\mathbf{k}}}{\sum_{\mathbf{l}\in\{0,\hdots,r-1\}^n}\alpha^r_{\mathbf{l}}},\,\,\text{with}\,\,
\alpha^r_{\mathbf{k}}=\frac{C^r_{\mathbf{k}}}{(\epsilon+I^r_{\mathbf{k}})^t},
\end{equation}
where $I^r_{\mathbf{k}}$ denote the smoothness indicators, which are required to satisfy the following conditions:
\begin{enumerate}[label={\bfseries P\arabic*}]
\item\label{P1sm} The order of the smoothness indicator in regions free of discontinuities is $h^2$, i.e.,
$$I^r_{\mathbf{k}}=O(h^2) \,\, \text{if}\,\, f \,\, \text{is smooth in } \,\, \mathcal{S}^r_{\mathbf{k}}.$$
\item\label{P2sm}The distance between two smoothness indicators in regions free of discontinuities is $h^{r+1}$, i.e.,
 let
  $\mathbf{k}=(k_1, \hdots, k_n)$, and $\mathbf{k}'=(k'_1, \hdots, k'_n)$
 be such that both $\mathcal{S}^r_{\mathbf{k}}$ and $\mathcal{S}^r_{\mathbf{k}'}$ are free of discontinuities, then
$$I^r_{\mathbf{k}}-I^r_{\mathbf{k}'}=O(h^{r+1}).$$
\item\label{P3sm} When the stencil $\mathcal{S}^r_{\mathbf{k}}$ is affected by a discontinuity,
 then:
$$I^r_{\mathbf{k}} \nrightarrow 0 \,\, \text{as}\,\, h\to 0.$$
\end{enumerate}


}

To construct the smoothness indicators in $\mathbb{R}^n$, we consider
\begin{equation} \label{set}
\mathcal{S}_k=\Omega_k \cap X = \{\mathbf{x}_{k_i} \in \Omega_k \cap X \colon i=1, \dots, N_k\},
\end{equation}
with $N_k>3$, and the linear least square problem \begin{equation} \label{problemLS}
p_k=\underset{p\in\Pi_1(\mathbb{R}^2)}{\arg \min} \sum_{i=1}^{N_k} (f(\mathbf{x}_{k_i})-p(\mathbf{x}_{k_i}))^2,
\end{equation} 
where \(\Pi_1(\mathbb{R}^2)\) denotes the space of all polynomials of degree at most one. Subsequently, we introduce the smoothness indicators as the average of the absolute errors, i.e.,
\begin{equation} \label{smoothind}
I_k=\frac{1}{N_k}\sum_{i=1}^{N_k} |f(\mathbf{x}_{k_i})-p_k(\mathbf{x}_{k_i})|,
\end{equation}
which satisfy the conditions {\color{black}(see, e.g., \cite{ABM,BBMZ})}:
\begin{itemize}
\item The order of a smoothness indicator in regions free of discontinuities is \(h^2\); that is, \(I_k = \mathcal{O}(h^2)\) whenever \(f\) is smooth within \(\Omega_k\).
\item $I_k \not\rightarrow 0 \text{ as } h \rightarrow 0$ when a discontinuity crosses $\Omega_k$.
\end{itemize}

The operator is:
\begin{equation} \label{INLPUM}
\mathcal{Q}^\text{{\color{black}DD}}_\text{PU}(f)(\mathbf{x})=\sum_{k=1}^M \mathcal{W}_k (\mathbf{x})p_k(\mathbf{x}),
\end{equation}
using
$$\mathcal{W}_k(\mathbf{x})=\frac{\alpha_k(\mathbf{x})}{\sum_{j=1}^M \alpha_j(\mathbf{x})}, \quad \text{with } \alpha_k(\mathbf{x}) = \gamma_k \varphi_k(\mathbf{x})= \frac{\varphi_k(\mathbf{x})}{(\epsilon+I_k)^t}, \quad {\color{black}\varphi_k(\mathbf{x})=\varphi\left(\mu_k\frac{\|\mathbf{x}-\mathbf{x_k}\|}{h}\right),} $$
where $0<\gamma_k=(\epsilon+I_k)^{-t}${\color{black}, $\mu_k$ are some positive constants ($\mu_k$ independent of $h$),}  and $\varphi$ are compactly supported functions. {\color{black}Note that the constant $\frac{\mu_k}{h}$ is chosen in order to get $\varphi_k$ with support in $\Omega_k$.}
{\color{black} Let us} look at some important properties of this method. We consider a function \( \zeta : \mathbb{R}^n \to \mathbb{R},\, \zeta \in \mathcal{C}^1 \), and the sets
\begin{equation} \label{setsOmega}
\Gamma = \{ \mathbf{x} \in \Omega : \zeta(\mathbf{x}) = 0 \}, \quad \Omega^+ = \{ \mathbf{x} \in \Omega : \zeta(\mathbf{x}) \geq 0 \}, \quad \Omega^- = \Omega \setminus \Omega^+
\end{equation} 
that is to say that $\Omega$ is divided into two subsets, $\Omega = \Omega^+ \cup \Omega^-$. Let us consider that the data under consideration is generated by a piecewise-defined function of the form
$$
f(\mathbf{x}) = \begin{cases}
f^+(\mathbf{x}), & \mathbf{x} \in \Omega^+, \\
f^-(\mathbf{x}), & \mathbf{x} \in \Omega^-,
\end{cases}
$$
where $f^\pm \in \mathcal{C}^k(\overline{\Omega^\pm}),\; k \in \mathbb{N}$, but

$$
\lim_{\mathbf{x} \to \mathbf{x}_0 \in \Gamma} f^-(\mathbf{x}) \neq f^+(\mathbf{x}_0).
$$ 
Accordingly, the set of scattered data points $X = \{ \mathbf{x}_i \}_{i=1}^N$ is naturally partitioned into two subsets, $X^\pm = X \cap \Omega^\pm$. In other words, we consider a domain in $\mathbb{R}^n$ that is divided into two parts separated by a smooth curve. The function we want to approximate is smooth in each part but has a jump discontinuity across the curve, so we split the data into two sets depending on which side of the discontinuity the points belong to. \textcolor{black}{This two-region setting is considered for simplicity of presentation. The proposed {\color{black} DDPU}--MLS method naturally extends to multiple or intersecting  discontinuities, since the non-linear weights locally adapt to the regularity of the data in each patch, in the same spirit as classical WENO schemes.}

We aim to demonstrate that, in smooth regions, the approximation preserves the expected order of convergence. As a consequence, it is observed that the method effectively suppresses the Gibbs phenomenon.





\begin{proposition}
Let \( \Omega \subset \mathbb{R}^n \) be an open and bounded domain such that \( \Omega \subseteq \bigcup_{k=1}^M \Omega_k \), where each \( \Omega_k \subset \mathbb{R}^n \) is open. Suppose a function \( \zeta : \mathbb{R}^n \to \mathbb{R}\) that divides our domain into two subsets, \( \Omega = \Omega^+ \cup \Omega^- \), where $\Omega^+$ and $\Omega^-$ are defined as in Eq. \eqref{setsOmega}. Let \( h > 0 \) denote the local fill distance. Given a discrete set \( X \subset \Omega \), define the local sample set $\mathcal{S}_k$ defined in Eq. \eqref{set}, with $N_k>3$. For each \( k \), consider the linear least-squares polynomial approximation $p_k$ defined in \eqref{problemLS}, and the local smoothness indicator is defined by Eq. \eqref{smoothind}. Let \( \varphi_k(\mathbf{x}) \) be compactly supported weight functions supported on \( \Omega_k \), and define the non-linear weights by
\[
\alpha_k(\mathbf{x}) = \frac{\varphi_k(\mathbf{x})}{(\epsilon + I_k)^t}, \quad \mathcal{W}_k(\mathbf{x}) = \frac{\alpha_k(\mathbf{x})}{\sum_{j=1}^M \alpha_j(\mathbf{x})},
\]
where \( \epsilon > 0 \) and \( t > 0 \) are fixed parameters.

Then, the following statements hold:
\begin{itemize}
    \item If \( f \in \mathcal{C}^{2}(\overline{\Omega_k}) \), then
    \[
    \|f - p_k\|_{\infty, \Omega_k} = O(h^{2}) \quad \text{as } h \to 0.
    \]
    \item If \( f \) has a discontinuity in \( \Omega_{k_0} \), then
    \[
    \|f - p_{k_0}\|_{\infty, \Omega_{k_0}} = O(1) \quad \text{as } h \to 0.
    \]
\end{itemize}

As a consequence, the normalized weights satisfy:
$$
\mathcal{W}_k(\mathbf{x}) = 
\begin{cases}
O(1), & \text{if } f \in \mathcal{C}^{2}(\overline{\Omega_k}), \\
O(h^{2t}), & \text{if } f \text{ is not smooth in } \overline{\Omega_k}.
\end{cases}
$$
\end{proposition}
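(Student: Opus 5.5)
For the first statement I will compare $p_k$ with the first-order Taylor polynomial of $f$ at the centre $\tilde{\mathbf{x}}_k$ of $\Omega_k$. Call this polynomial $T_k\in\Pi_1$. Since $\Omega_k$ is a ball of radius $\varepsilon(h)$ with $\varepsilon(h)=O(h)$, Taylor's theorem gives $\|f-T_k\|_{\infty,\Omega_k}\le C\,\varepsilon(h)^2|f|_{\mathcal{C}^2(\overline{\Omega_k})}=O(h^2)$.

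The least-squares operator $L_k\colon f\mapsto p_k$ of \eqref{problemLS} is linear and reproduces $\Pi_1$. Hence
\[
f-p_k=(f-T_k)-L_k(f-T_k),
\]
and it suffices to bound $L_k$ uniformly, in the sense $\|L_k g\|_{\infty,\Omega_k}\le \Lambda\max_i|g(\mathbf{x}_{k_i})|$ with $\Lambda$ independent of $h$. I will obtain this by rescaling $\Omega_k$ to the unit ball. The least-squares problem is invariant under affine changes of variable. By quasi-uniformity \eqref{acot} and the unisolvency assumption $N_k>3$, the rescaled node sets stay uniformly unisolvent: they contain $\Pi_1$-unisolvent subsets that are well separated relative to the unit scale. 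The rescaled Gram matrix $E^TE$ (in the sense of \eqref{construccionE_pointwise} with $D=I$) then has smallest eigenvalue bounded below, which gives a uniform Lebesgue constant, just as in Theorem~\ref{orderL}. This uniform unisolvency is the step I expect to be the main obstacle. I would state it as a standing assumption if the paper's regular-covering hypotheses do not provide it directly. Combining the two estimates gives $\|f-p_k\|_{\infty,\Omega_k}\le(1+\Lambda)\,O(h^2)$.

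For the second statement, let the jump be $J=\lim f^- - f^+\neq0$ at some point of $\Gamma\cap\Omega_{k_0}$. I will argue by contradiction. If $\|f-p_{k_0}\|_{\infty}\to0$, then the residuals at nodes on both sides of $\Gamma$ (both sets being nonempty for small $h$, since $\Gamma$ crosses the ball) would make the affine $p_{k_0}$ close to $f^+$ on $X^+\cap\Omega_{k_0}$ and close to $f^-$ on $X^-\cap\Omega_{k_0}$. But an affine function varies by only $O(h)$ across a ball of radius $O(h)$, while $f^\pm$ differ by about $|J|$ near the interface. Therefore some residual is at least roughly $|J|/2$, so the error does not tend to zero. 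The same computation shows that a fixed positive fraction of the nodes carry residuals of order $|J|$, and this is what gives $I_{k_0}\not\to0$. In the first case, $I_k\le\|f-p_k\|_\infty=O(h^2)$ follows directly from \eqref{smoothind}.

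For the weights I will take $\epsilon$ of order $h^2$ (or negligible), as is usual in this WENO setting. Then $\gamma_k=(\epsilon+I_k)^{-t}\sim h^{-2t}$ for a smooth patch and $\gamma_{k_0}=O(1)$ for a patch crossed by the discontinuity. At any $\mathbf{x}$, at most $K_1$ terms are nonzero, and the $\varphi_j$ are bounded. If some smooth patch contains $\mathbf{x}$ with $\varphi_j(\mathbf{x})$ bounded below, the denominator satisfies $\sum_j\alpha_j(\mathbf{x})\gtrsim h^{-2t}$. This gives $\mathcal{W}_{k_0}=O(h^{2t})$, and trivially $\mathcal{W}_k\le1=O(1)$ for the smooth patches.
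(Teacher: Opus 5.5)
\paragraph{Main gap: the lower bound on nodal residuals.} The problem is in your lower bound for the crossed patch $\Omega_{k_0}$. Your premise that an affine function varies by only $O(h)$ across a ball of radius $O(h)$ holds only when its gradient is bounded independently of $h$. The least-squares fit $p_{k_0}$ is not of that kind: it responds to the jump with a slope of order $|J|/h$. Already in one variable, fitting a line to the values $0,0,0,J,J$ at $-2h,-h,0,h,2h$ gives slope $0.3J/h$, fitted values spread over $1.2|J|$, and largest nodal residual $0.4|J|<|J|/2$. So your argument gives no lower bound on the nodal residuals, and that lower bound is exactly what $I_{k_0}\not\to 0$ needs. (For the continuous norm no gradient bound is needed: $p_{k_0}$ is continuous and $f$ jumps inside the open ball, so $\|f-p_{k_0}\|_{\infty,\Omega_{k_0}}\ge|J|/2$. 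The literal claim $O(1)$ is just the upper bound $(1+\Lambda)\|f\|_\infty$ from your part-1 Lebesgue constant.)

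The missing step is to apply your norming inequality on one side of $\Gamma$ only. Suppose $X^+\cap\Omega_{k_0}$ contains a subset $Y$ with $\|q\|_{\infty,\Omega_{k_0}}\le\Lambda'\max_{Y}|q|$ for all $q\in\Pi_1$. Let $\delta$ be the largest nodal residual, and let $T^+$ be a linear Taylor polynomial of $f^+$ at a point of $\Gamma\cap\Omega_{k_0}$. Then $|p_{k_0}-T^+|\le\delta+O(h^2)$ on $Y$, hence $\|p_{k_0}-T^+\|_{\infty,\Omega_{k_0}}\le\Lambda'(\delta+O(h^2))$. Evaluating at any node of $X^-\cap\Omega_{k_0}$ gives $\delta\ge|J|-O(h)-\Lambda'(\delta+O(h^2))$, i.e.\ $\delta\ge(|J|-O(h))/(1+\Lambda')$. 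Finally, $I_{k_0}\ge\delta/N_{k_0}$ stays away from zero because $N_{k_0}$ is uniformly bounded under quasi-uniformity with $\varepsilon(h)=O(h)$. Your ``fixed positive fraction of the nodes'' claim is unjustified and not needed.

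\paragraph{Hypotheses to make explicit.} Your claim that $X^+\cap\Omega_{k_0}$ and $X^-\cap\Omega_{k_0}$ are both nonempty for small $h$ ``since $\Gamma$ crosses the ball'' is false. The ball shrinks with $h$, and $\Gamma$ can cut off a node-free cap at every level. Then $\mathcal{S}_{k_0}$ sees only smooth data, $I_{k_0}=O(h^2)$, and $\mathcal{W}_{k_0}=O(h^{2t})$ simply fails. You need to assume that $\mathcal{S}_{k_0}$ meets both sides and that one side carries a set $Y$ as above. Likewise, $N_k>3$ is only a count, not unisolvency, so your part-1 uniform unisolvency is a genuine extra hypothesis.

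By contrast, your choice $\epsilon\sim h^2$ and your requirement of a smooth patch with $\varphi_j(\mathbf{x})$ bounded below are necessary, not cosmetic. With $\epsilon$ fixed as in the statement, every $\gamma_k$ lies between positive constants independent of $h$, so $\mathcal{W}_{k_0}$ does not decay. And at $\mathbf{x}\in\Gamma$ every patch containing $\mathbf{x}$ is crossed, while the weights still sum to $1$.

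\paragraph{Comparison with the paper.} The paper's own proof is an order-of-magnitude sketch with the same skeleton as yours. It cites ``standard least-squares theory'' for the $O(h^2)$ residuals, asserts $O(1)$ residuals in crossed patches, and silently writes $(\epsilon+O(h^2))^{-t}=O(h^{-2t})$. Your part 1 and your weight argument are a more careful version of it, and with the one-sided repair above, part 2 would be too.
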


\begin{proof}
Assume $f \in \mathcal{C}^{2}(\overline{\Omega_k})$. Then, by standard least-squares approximation theory over quasi-uniform samples with polynomials of degree 1, we have:
$$
|f(\mathbf{x}_{k_i}) - p_k(\mathbf{x}_{k_i})| = O(h^{2}), \quad \text{for all } i = 1, \dots, N_k,
$$
which implies
$$
I_k = \frac{1}{N_k} \sum_{i=1}^{N_k} |f(\mathbf{x}_{k_i}) - p_k(\mathbf{x}_{k_i})| = O(h^{2}).
$$
Hence,
$$
\alpha_k(\mathbf{x}) = \frac{\varphi_k(\mathbf{x})}{(\epsilon + O(h^{2}))^t} = O(h^{-2t}),
$$
and the normalized weight is
$$
\mathcal{W}_k(\mathbf{x}) = \frac{\alpha_k(\mathbf{x})}{\sum_{j=1}^M \alpha_j(\mathbf{x})} = \frac{O(h^{-2t})}{O(h^{-2t})} = O(1).
$$

Now suppose $f$ is not smooth in $\overline{\Omega_k}$ (i.e., it has a discontinuity or a singularity). Then, the least-squares error does not vanish as $h \to 0$, and
$$
|f(\mathbf{x}_{k_i}) - p_k(\mathbf{x}_{k_i})| = O(1),
$$
so that
$$
I_k = O(1), \quad \text{and therefore,} \quad \alpha_k(\mathbf{x}) = \frac{\varphi_k(\mathbf{x})}{(\epsilon + O(1))^t} = O(1).
$$
Comparing with the smooth case, where $\alpha_j(\mathbf{x}) = O(h^{-2t})$, we get
$$
\mathcal{W}_k(\mathbf{x}) = \frac{O(1)}{O(h^{-2t}) + O(1)} = O(h^{2t}).
$$ 

So we distinguish two regions in the domain: one where the function is smooth, and another where it is not. In the smooth regions, the approximation error \( f(\mathbf{x}) - p_k(\mathbf{x}) \) behaves like \( O(h^2) \) as \( h \to 0 \), and the weights \( \mathcal{W}_k(\mathbf{x}) \) remain bounded, that is, they are \( O(1) \). In contrast, in non-smooth regions, such as near a discontinuity, the approximation error does not decrease with \( h \); it stays of order \( O(1) \).

As we have seen in the previous calculations, in order to reach the expected accuracy, the weights in the non-smooth regions must be of order \( O(h^{2t}) \). This choice makes sure that the influence of the non-smooth regions on the total error becomes small when \( h \to 0 \), so that the final approximation keeps the desired order of accuracy.

\end{proof}

The smoothness of the new approximation operator depends only on the function $\varphi_k$ since $\alpha_k=\gamma_k \varphi_k$. We summarize this property in the next proposition.

\begin{proposition} \label{orderNL}
    Let $\nu\in \mathbb{N}$. Let \( \Omega \subset \mathbb{R}^n \) be an open and bounded domain such that \( \Omega \subseteq \bigcup_{k=1}^M \Omega_k \), where each \( \Omega_k \subset \mathbb{R}^n \) is open. If the weight functions $\varphi_k \in \mathcal{C}^{\nu}(\overline{\Omega_k})$ for all \( k = 1, \dots, M \), then the new approximation defined in Eq. \eqref{INLPUM} is $\mathcal{C}^{\nu}(\overline{\Omega})$.
\end{proposition}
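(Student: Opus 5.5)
The plan is to write the operator in \eqref{INLPUM} as a finite sum of products and quotients of functions whose regularity we control. The key structural fact is that the data-dependent factors do not depend on $\mathbf{x}$. Each $\gamma_k=(\epsilon+I_k)^{-t}$ is a positive number: it is computed from the fixed data $\{f(\mathbf{x}_{k_i})\}$ through \eqref{problemLS} and \eqref{smoothind}, and it does not vary with the evaluation point. Likewise, each $p_k$ in \eqref{problemLS} is the least-squares fit on the fixed set $\mathcal{S}_k$, hence a fixed polynomial of degree at most one, which is $\mathcal{C}^\infty$ in $\mathbf{x}$. So all the $\mathbf{x}$-dependence of $\mathcal{Q}^{\text{DD}}_{\text{PU}}(f)$ enters only through the $\varphi_k$. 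This is where the non-linearity, which acts on the data, is decoupled from the smoothness, which is a property in $\mathbf{x}$.

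Step 1: regularity of each $\alpha_k$. We interpret the hypothesis as follows: each $\varphi_k$, extended by zero outside $\Omega_k$, is $\mathcal{C}^\nu$ on $\mathbb{R}^n$. This is the natural reading, since $\varphi_k$ has compact support in $\Omega_k$, and it mirrors the convention used for $w$ in Section~\ref{sec2}. Then $\alpha_k=\gamma_k\varphi_k\in\mathcal{C}^\nu(\overline{\Omega})$ for every $k$.

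Step 2: regularity of the numerator and denominator. The numerator $\sum_k \alpha_k p_k$ is $\mathcal{C}^\nu(\overline{\Omega})$, because it is a finite sum of products of $\mathcal{C}^\nu$ functions with polynomials. The denominator is $S(\mathbf{x})=\sum_{j=1}^M\alpha_j(\mathbf{x})$, which is also $\mathcal{C}^\nu$.

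Step 3: positivity of the denominator. This is the main obstacle, because the quotient rule only gives $\mathcal{C}^\nu$ regularity where $S$ stays away from zero. The approach is as follows.
\begin{itemize}
\item Every $\mathbf{x}\in\overline{\Omega}$ must lie in some $\Omega_k$ with $\varphi_k(\mathbf{x})>0$. This holds on $\Omega$ by the covering $\Omega\subseteq\bigcup_k\Omega_k$, together with the property that $\varphi_k>0$ in the interior of its support ball. That property holds because $\varphi$ is positive on the interior of its support, exactly as for the weight $w$.
\item For the closure, we use compactness of $\overline{\Omega}$ and continuity of $S$. If the covering only reaches $\Omega$, we strengthen it to $\overline{\Omega}\subseteq\bigcup_k\Omega_k$. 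This is harmless, since the balls are open and slightly overlapping (condition 2 of the PU construction), so a cover of the bounded set $\Omega$ can be arranged to cover its closure.
\item With this, $S$ is continuous and strictly positive on the compact set $\overline{\Omega}$, hence $S\ge s_0>0$ there.
\end{itemize}

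Step 4: conclusion. Since $S\in\mathcal{C}^\nu$ and $S\ge s_0>0$, the reciprocal $1/S$ is $\mathcal{C}^\nu$ on a neighbourhood of $\overline{\Omega}$, by the chain rule with $u\mapsto 1/u$, which is smooth on $(0,\infty)$. Each $\mathcal{W}_k=\alpha_k/S$ is then $\mathcal{C}^\nu(\overline{\Omega})$, and $\mathcal{Q}^{\text{DD}}_{\text{PU}}(f)=\sum_k\mathcal{W}_kp_k\in\mathcal{C}^\nu(\overline{\Omega})$. If desired, the Leibniz formula gives explicit bounds on $D^{\boldsymbol{\alpha}}$ for $|\boldsymbol{\alpha}|\le\nu$, but these are not needed for the qualitative statement.
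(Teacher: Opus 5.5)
Your proof is correct and follows the paper's route. Since $\gamma_k$ and $p_k$ do not depend on $\mathbf{x}$, the functions $\alpha_k=\gamma_k\varphi_k$ and $\sum_j\alpha_j$ are $\mathcal{C}^\nu$, and once the denominator is shown positive, the quotient rule and products with polynomials give the claim. Two of your additions are left implicit in the paper, which only checks positivity of the denominator on $\Omega$. The first is your explicit reading of the hypothesis as $\mathcal{C}^\nu$ regularity of the zero extension of $\varphi_k$. The second is your requirement that the positivity sets of the $\varphi_k$ cover $\overline{\Omega}$; this is an added assumption, not a consequence of the stated cover of $\Omega$, but it is genuinely needed. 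For example, take $\Omega=\Omega_1\cup\Omega_2$, two overlapping balls: at a corner point of $\partial\Omega$, the weight $\mathcal{W}_1$ tends to $1$ along $\partial\Omega_2$ and to $0$ along $\partial\Omega_1$, so it has no continuous extension there.
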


\begin{proof}
    By Eq. \eqref{INLPUM}, since each \( \gamma_k \in \mathbb{R}^+ \) is a constant and \( \varphi_k \in \mathcal{C}^{\nu}(\overline{\Omega_k}) \), it follows that  $$\alpha_k(\mathbf{x}) = \gamma_k \varphi_k(\mathbf{x}) \in \mathcal{C}^{\nu}(\overline{\Omega_k}).$$ Moreover, the denominator $\sum_{j=1}^M \alpha_j(\mathbf{x})$ is a finite sum of \( \mathcal{C}^{\nu} \) functions, so it is also \( \mathcal{C}^{\nu} \), and strictly positive on \( \Omega \) due to the coverage assumption \( \Omega \subseteq \bigcup_{k=1}^M \Omega_k \) and the positivity of \( \varphi_k \) inside their supports. Therefore, the normalized weights
    $$\mathcal{W}_k(\mathbf{x}) = \frac{\alpha_k(\mathbf{x})}{\sum_{j=1}^M \alpha_j(\mathbf{x})} \in \mathcal{C}^{\nu}(\overline{\Omega}).$$

    Since each \( p_k \) is a polynomial function, it is infinitely differentiable and hence belongs to \( \mathcal{C}^\nu(\overline{\Omega_k}) \) for any \( \nu \). The product \( \mathcal{W}_k(\mathbf{x}) p_k(\mathbf{x}) \) is then \( \mathcal{C}^{\nu} \), and the finite sum
    \[
    \mathcal{Q}^{\mathrm{NL}}_{\mathrm{PU}}(f)(\mathbf{x}) = \sum_{k=1}^M \mathcal{W}_k(\mathbf{x}) p_k(\mathbf{x})
    \]
    is also \( \mathcal{C}^{\nu} \) on \( \overline{\Omega} \). Hence, the global {\color{black}data-dependent} PU-MLS approximation is \( \mathcal{C}^{\nu}(\overline{\Omega}) \).
\end{proof}

\begin{remark}
The {\color{black} DDPU}-MLS method constructed above effectively mitigates the Gibbs phenomenon near discontinuities. This is achieved through the non-linear weighting mechanism, which assigns significantly smaller weights to local approximants constructed over regions where the function lacks smoothness. As a result, the global approximation avoids the spurious oscillations typically observed in standard MLS or linear PU-MLS approaches when approximating functions with jumps or discontinuities. We will look at this fact in the next section on numerical experiments.
\end{remark}

\section{Numerical experiments} \label{num} 

In this section, we provide a series of numerical examples to confirm the results established in this work and to examine the properties of the proposed {\color{black}data-dependent} algorithm, {\color{black} DDPU}-MLS, in relation to the conventional PU-MLS method. In Subsection \ref{order}, we begin with an example designed to analyze the order of accuracy using a smooth test case, namely, the well-known Franke's function as given in Eq. \eqref{Franke}. The subsequent subsection focuses on the suppression of undesired oscillations near discontinuities. To this end, we revisit the example introduced in Sec. \ref{intro}, which serves to illustrate the effectiveness of the proposed algorithm for this class of functions. The numerical tests are conducted using both a regular grid and Halton sequences as sampling points. In all cases, the same parameter settings as in \cite{cavo} are employed; specifically, the number of subdomains along a single direction is given by $\lfloor \sqrt{N}/2 \rfloor$, with the radius chosen as $\delta=\sqrt{2/M}$. {\color{black} All numerical experiments are performed in two spatial dimensions, consistently with the multivariate formulation of the {\color{black} DDPU}--MLS method introduced in Section \ref{sec4}.}


\subsection{Order of accuracy} \label{order} 
We begin our analysis of the approximation order by considering the well-known Franke’s function, given in Eq.~\eqref{Franke}. The numerical tests employ two types of data distributions: a uniform grid $X_{N=2^l+1}=\{(i/2^l,j/2^l)\, \colon \, i,j=0,\dots, 2^l \}$ and a collection of $N=(2^l+1)^2$ scattered data points generated from the Halton sequence, as described in \cite{H}. The parameter $h_l$ denotes the corresponding fill distance, while the pointwise errors are defined as
$$e^l_j=|f({\color{black}\mathbf{z}}_j)-\mathcal{Q}^l({\color{black}\mathbf{z}}_j)|,$$ 
{\color{black} where $\{{\color{black}\mathbf{z}}_j\}_{j=1}^{N_{\mathrm{eval}}}$ denotes a uniform evaluation grid over $[0,1]^2$, with $N_{\mathrm{eval}} = 120^2$,} and $\mathcal{Q}^l$ stands for either $\mathcal{Q}_\text{PU}(f)$ or $\mathcal{Q}^\text{{\color{black}DD}}_\text{PU}(f)$ at each refinement level $l$. {\color{black} Here, the index $j$ labels the evaluation points in the two-dimensional domain, and $e_j^l$ denotes the pointwise error at the location $\mathbf{z}_j \in \mathbb{R}^2$.} Finally, the discrete maximum and root-mean-square errors, together with their associated convergence rates, are computed as
\begin{equation*} 
\begin{split}
\text{MAE}_l=\max_{j=1,\hdots,120} e^l_j,  \quad \text{RMSE}_l=\left(\frac{1}{120} \sum_{j=1}^{120} (e^l_j)^2\right)^{\frac{1}{2}}, \quad r_l^{\infty}=\frac{\log(\text{MAE}_{l-1}/\text{MAE}_{l})}{\log(h_{l-1}/h_{l})}, \quad r_l^{2}=\frac{\log(\text{RMSE}_{l-1}/\text{RMSE}_{l})}{\log(h_{l-1}/h_{l})}.
\end{split}
\end{equation*}

We use the acronyms PU-MLS$^p_\mathcal{H}$ and {\color{black} DDPU}-MLS$^p_\mathcal{H}$ when the operators shown in Eqs. \eqref{PUM} and \eqref{INLPUM} are used, with $p$ representing the degree of the polynomials and $\mathcal{H}=$ G, W2, W4 indicating that $w$ is Gaussian, Wendland $\mathcal{C}^2$ or Wendland $\mathcal{C}^4$ function, as shown in Table \ref{tabla1nucleos}. {\color{black} We apply these functions in the form $w_i(\mathbf{x})=w(\gamma_k \|\mathbf{x}-\mathbf{x}_i\|)$, where the shape parameter is defined as $\gamma_k = \frac{\mu_k}{h}$ for each refinement level $k$. We take $\gamma_k=\lfloor \sqrt{N}/2 \rfloor / \sqrt{2}$ for W2 and W4, and $\gamma_k=\sqrt{2} \,\lfloor \sqrt{N}/2 \rfloor$ for G.} In the Gaussian case, we only consider the values $\mathbf{x}_i \in X$ such that $w_i(\mathbf{x})>10^{-10}$.


\begin{table}[h!]
\begin{center}
\begin{tabular}{lrrrrrrrrrrr}
& \multicolumn{2}{c}{PU-MLS$^2_{\text{W2}}$} & &   \multicolumn{2}{c}{{\color{black} DDPU}-MLS$^2_{\text{W2}}$}& &\multicolumn{2}{c}{PU-MLS$^2_{\text{W2}}$} & &   \multicolumn{2}{c}{{\color{black} DDPU}-MLS$^2_{\text{W2}}$}\\ \cline{1-3} \cline{5-6} \cline{8-9} \cline{11-12}
$l$ & $\text{MAE}_l$ & $r_l^{\infty}$ & &$\text{MAE}_l$ & $r_l^{\infty}$ & & $\text{RMSE}_l$ & $r_l^{2}$ & &$\text{RMSE}_l$ & $r_l^{2}$       \\
\hline
$4$  &                     1.0660e-02 &       & & 2.8853e-02 &      &&         1.2781e-03 &       &&  3.7739e-03 &          \\
$5$  &                     8.9460e-04 &  3.5748 & & 2.6184e-03 &  3.4620&&         1.0177e-04 &  3.6506 &&  4.2879e-04 &  3.1377    \\
$6$  &                     6.7838e-05 &  3.7211 & & 4.4858e-04 &  2.5453&&         7.6350e-06 &  3.7365 &&  5.2508e-05 &  3.0297    \\
$7$ &                     5.3291e-06 &  3.6701 & & 4.6282e-05 &  3.2768&&         6.6164e-07 &  3.5285 &&  6.3381e-06 &  3.0504    \\
\hline
\hline
& \multicolumn{2}{c}{PU-MLS$^2_{\text{W4}}$} & &   \multicolumn{2}{c}{{\color{black} DDPU}-MLS$^2_{\text{W4}}$}& &\multicolumn{2}{c}{PU-MLS$^2_{\text{W4}}$} & &   \multicolumn{2}{c}{{\color{black} DDPU}-MLS$^2_{\text{W4}}$}\\ \cline{1-3} \cline{5-6} \cline{8-9} \cline{11-12}
$l$ & $\text{MAE}_l$ & $r_l^{\infty}$ & &$\text{MAE}_l$ & $r_l^{\infty}$ & & $\text{RMSE}_l$ & $r_l^{2}$ & &$\text{RMSE}_l$ & $r_l^{2}$       \\
\hline
$4$  &                  8.4714e-03 &        && 3.0652e-02 &       &&   9.8372e-04  &     & &  3.4076e-03 &        \\
$5$  &                  7.4136e-04 &  3.5144  && 2.4897e-03 &  3.6219 &&   7.8304e-05  & 3.6511& &  3.7181e-04 &  3.1961 \\
$6$  &                  5.8730e-05 &  3.6580  && 3.9069e-04 &  2.6719 &&   6.6490e-06  & 3.5579& &  4.4703e-05 &  3.0561 \\
$7$ &                  4.2374e-06 &  3.7928  && 3.6824e-05 &  3.4073 &&   5.3169e-07  & 3.6445& &  5.3467e-06 &  3.0636 \\
\hline
\hline
& \multicolumn{2}{c}{PU-MLS$^2_{\text{G}}$} & &   \multicolumn{2}{c}{{\color{black} DDPU}-MLS$^2_{\text{G}}$}& &\multicolumn{2}{c}{PU-MLS$^2_{\text{G}}$} & &
\multicolumn{2}{c}{{\color{black} DDPU}-MLS$^2_{\text{G}}$}\\ \cline{1-3} \cline{5-6} \cline{8-9} \cline{11-12}
$l$ & $\text{MAE}_l$ & $r_l^{\infty}$ & &$\text{MAE}_l$ & $r_l^{\infty}$ & & $\text{RMSE}_l$ & $r_l^{2}$ & &$\text{RMSE}_l$ & $r_l^{2}$       \\ \hline

$4$  & 1.8314e-02 &   & & 5.4331e-02 &   && 2.4049e-03 &    && 8.0603e-03 &     \\
$5$  & 1.7815e-03 & 3.3617& & 9.7512e-03 & 2.4781&& 2.0424e-04 & 3.5577&& 1.0028e-03 & 3.0068  \\
$6$  & 1.3656e-04 & 3.7055& & 1.0640e-03 & 3.1961&& 1.5270e-05 & 3.7415&& 1.2148e-04 & 3.0453  \\
$7$  & 2.1561e-05 & 2.6631& & 9.9053e-05 & 3.4252&& 2.5381e-06 & 2.5889&& 1.3977e-05 & 3.1196  \\

\hline
\end{tabular}
\end{center}
\caption{\scriptsize{Errors and rates using linear and {\color{black} data-dependent} PU-MLS methods (degree 2) for Franke's test function evaluated at grid points.}}\label{exp11}
\end{table}

\begin{table}[h!]
\begin{center}
\begin{tabular}{lrrrrrrrrrrr}
& \multicolumn{2}{c}{PU-MLS$^3_{\text{W2}}$} & &   \multicolumn{2}{c}{{\color{black} DDPU}-MLS$^3_{\text{W2}}$}& &\multicolumn{2}{c}{PU-MLS$^3_{\text{W2}}$} & &   \multicolumn{2}{c}{{\color{black} DDPU}-MLS$^3_{\text{W2}}$}\\ \cline{1-3} \cline{5-6} \cline{8-9} \cline{11-12}
$l$ & $\text{MAE}_l$ & $r_l^{\infty}$ & &$\text{MAE}_l$ & $r_l^{\infty}$ & & $\text{RMSE}_l$ & $r_l^{2}$ & &$\text{RMSE}_l$ & $r_l^{2}$       \\
\hline
$2$  & 2.6635e-01 &       & & 1.2665e+00 &      && 7.5969e-02 &       && 6.9229e-02 &          \\
$3$  & 5.8942e-02 & 2.1759 & & 6.4884e-02 & 4.2868 && 1.0215e-02 & 2.8947 && 1.2481e-02 & 2.4717 \\
$4$  & 1.0321e-02 & 2.5138 & & 1.8325e-02 & 1.8241 && 1.1200e-03 & 3.1891 && 1.5477e-03 & 3.0115 \\
$5$  & 8.1815e-04 & 3.6570 & & 1.2601e-03 & 3.8621 && 8.2028e-05 & 3.7712 && 9.9590e-05 & 3.9580 \\
\hline
\hline
& \multicolumn{2}{c}{PU-MLS$^3_{\text{W4}}$} & &   \multicolumn{2}{c}{{\color{black} DDPU}-MLS$^3_{\text{W4}}$}& &\multicolumn{2}{c}{PU-MLS$^3_{\text{W4}}$} & &   \multicolumn{2}{c}{{\color{black} DDPU}-MLS$^3_{\text{W4}}$}\\ \cline{1-3} \cline{5-6} \cline{8-9} \cline{11-12}
$l$ & $\text{MAE}_l$ & $r_l^{\infty}$ & &$\text{MAE}_l$ & $r_l^{\infty}$ & & $\text{RMSE}_l$ & $r_l^{2}$ & &$\text{RMSE}_l$ & $r_l^{2}$       \\
\hline
$2$  & 2.8200e-01 &        && 1.2665e+00 &       && 7.8948e-02 &     & & 7.0630e-02 &        \\
$3$  & 5.7881e-02 & 2.2845 && 6.2876e-02 & 4.3322 && 9.4199e-03 & 3.0671 & & 1.1173e-02 & 2.6603 \\
$4$  & 8.4535e-03 & 2.7755 && 1.7929e-02 & 1.8102 && 9.1034e-04 & 3.3712 & & 1.4273e-03 & 2.9686 \\
$5$  & 7.0882e-04 & 3.5760 && 8.5953e-04 & 4.3826 && 6.6973e-05 & 3.7648 & & 8.4005e-05 & 4.0867 \\
\hline
\hline
& \multicolumn{2}{c}{PU-MLS$^3_{\text{G}}$} & &   \multicolumn{2}{c}{{\color{black} DDPU}-MLS$^3_{\text{G}}$}& &\multicolumn{2}{c}{PU-MLS$^3_{\text{G}}$} & &
\multicolumn{2}{c}{{\color{black} DDPU}-MLS$^3_{\text{G}}$}\\ \cline{1-3} \cline{5-6} \cline{8-9} \cline{11-12}
$l$ & $\text{MAE}_l$ & $r_l^{\infty}$ & &$\text{MAE}_l$ & $r_l^{\infty}$ & & $\text{RMSE}_l$ & $r_l^{2}$ & &$\text{RMSE}_l$ & $r_l^{2}$       \\ \hline
$2$ & 2.4911e-01 &      & & 2.5331e+00 &      && 7.0075e-02 &      && 1.2272e-01 &       \\
$3$ & 2.4548e-01 & 0.0212 & & 8.1384e-02 & 4.9600  && 4.0979e-02 & 0.7740 && 9.8184e-03 & 3.6437 \\
$4$ & 1.3354e-02 & 4.2003 & &1.4330e-02 & 2.5057   && 1.5689e-03 & 4.7071 && 6.8683e-04 & 3.8375 \\
$5$ & 1.2850e-03 & 3.3775 & & 5.1022e-04 & 4.8117 && 1.3171e-04 & 3.5743 && 3.3476e-05 & 4.3587  \\
\hline
\end{tabular}
\end{center}
\caption{\scriptsize{Errors and rates using linear and {\color{black} data-dependent} PU-MLS methods (degree 3) for Franke's test function evaluated at grid points.}}\label{exp12}
\end{table}

Tables \ref{exp11} and \ref{exp12} confirm that, on uniform grids, the numerical order of accuracy matches the theoretical predictions: close to 3 for quadratic reconstruction ($p=2$) and close to 4 for cubic reconstruction ($p=3$). In both cases, PU-MLS and {\color{black} DDPU}-MLS display comparable convergence rates. The errors decrease consistently with refinement, with the {\color{black} DDPU}-MLS method yielding slightly smaller values in general.




Now, we repeat the experiment on non-uniform grids by using Halton's points within the interval $[0,1]^2$. {\color{black} As before, we use the same values of the shape parameter $\gamma_k$.}

\begin{table}[h!]
\begin{center}
\begin{tabular}{lrrrrrrrrrrr}
& \multicolumn{2}{c}{PU-MLS$^2_{\text{W2}}$} & &   \multicolumn{2}{c}{{\color{black} DDPU}-MLS$^2_{\text{W2}}$}& &\multicolumn{2}{c}{PU-MLS$^2_{\text{W2}}$} & &   \multicolumn{2}{c}{{\color{black} DDPU}-MLS$^2_{\text{W2}}$}\\ \cline{1-3} \cline{5-6} \cline{8-9} \cline{11-12}
$l$ & $\text{MAE}_l$ & $r_l^{\infty}$ & &$\text{MAE}_l$ & $r_l^{\infty}$ & & $\text{RMSE}_l$ & $r_l^{2}$ & &$\text{RMSE}_l$ & $r_l^{2}$       \\
\hline
$2$  & 1.2181e+00 &       & & 1.2181e+00 &       && 1.0833e-01 &       && 1.4901e-01 &        \\
$3$  & 2.6721e-01 & 2.1886& & 2.6721e-01 & 2.1886&& 1.6203e-02 & 2.7411&& 2.6141e-02 & 2.5110 \\
$4$  & 6.4666e-02 & 2.0469& & 6.4812e-02 & 2.0436&& 2.7382e-03 & 2.5650&& 4.2469e-03 & 2.6218 \\
$5$  & 4.6229e-03 & 3.8061& & 3.7808e-03 & 4.0995&& 2.6968e-04 & 3.3439&& 5.3960e-04 & 2.9765 \\
\hline
\hline
& \multicolumn{2}{c}{PU-MLS$^2_{\text{W4}}$} & &   \multicolumn{2}{c}{{\color{black} DDPU}-MLS$^2_{\text{W4}}$}& &\multicolumn{2}{c}{PU-MLS$^2_{\text{W4}}$} & &   \multicolumn{2}{c}{{\color{black} DDPU}-MLS$^2_{\text{W4}}$}\\ \cline{1-3} \cline{5-6} \cline{8-9} \cline{11-12}
$l$ & $\text{MAE}_l$ & $r_l^{\infty}$ & &$\text{MAE}_l$ & $r_l^{\infty}$ & & $\text{RMSE}_l$ & $r_l^{2}$ & &$\text{RMSE}_l$ & $r_l^{2}$       \\
\hline
$2$  & 1.5395e+00 &       & & 1.5395e+00 &       && 1.2517e-01 &       && 1.8510e-01 &        \\
$3$  & 2.6657e-01 & 2.5299& & 2.6657e-01 & 2.5299&& 1.5564e-02 & 3.0075&& 2.3416e-02 & 2.9827 \\
$4$  & 6.4795e-02 & 2.0406& & 6.4812e-02 & 2.0402&& 2.6643e-03 & 2.5464&& 3.9539e-03 & 2.5661 \\
$5$  & 3.3082e-03 & 4.2917& & 3.5490e-03 & 4.1908&& 2.6460e-04 & 3.3318&& 4.7547e-04 & 3.0559 \\
\hline
\hline
& \multicolumn{2}{c}{PU-MLS$^2_{\text{G}}$} & &   \multicolumn{2}{c}{{\color{black} DDPU}-MLS$^2_{\text{G}}$}& &\multicolumn{2}{c}{PU-MLS$^2_{\text{G}}$} & & \multicolumn{2}{c}{{\color{black} DDPU}-MLS$^2_{\text{G}}$}\\ \cline{1-3} \cline{5-6} \cline{8-9} \cline{11-12}
$l$ & $\text{MAE}_l$ & $r_l^{\infty}$ & &$\text{MAE}_l$ & $r_l^{\infty}$ & & $\text{RMSE}_l$ & $r_l^{2}$ & &$\text{RMSE}_l$ & $r_l^{2}$       \\ \hline
$2$  & 5.2726e-01 &   & & 5.2726e-01 &   && 1.0378e-01 &   && 8.1783e-02 &     \\
$3$  & 8.1774e-02 & 2.6888& & 1.2125e-01 & 2.1206&& 1.8985e-02 & 2.4507&& 2.8204e-02 & 1.5359  \\
$4$  & 1.5458e-02 & 2.4033& & 4.4310e-02 & 1.4522&& 2.8313e-03 & 2.7453&& 7.3400e-03 & 1.9420  \\
$5$  & 3.3042e-03 & 2.2260& & 9.9623e-03 & 2.1531&& 3.5984e-04 & 2.9760&& 1.0553e-03 & 2.7982  \\
\hline
\end{tabular}
\end{center}
\caption{\scriptsize{Errors and rates using linear and {\color{black} data-dependent} PU-MLS methods (degree 2) for the provided test on Halton's points.}}\label{exp23}
\end{table}

\begin{table}[h!]
\begin{center}
\begin{tabular}{lrrrrrrrrrrr}
& \multicolumn{2}{c}{PU-MLS$^3_{\text{W2}}$} & &   \multicolumn{2}{c}{{\color{black} DDPU}-MLS$^3_{\text{W2}}$}& &\multicolumn{2}{c}{PU-MLS$^3_{\text{W2}}$} & &   \multicolumn{2}{c}{{\color{black} DDPU}-MLS$^3_{\text{W2}}$}\\ \cline{1-3} \cline{5-6} \cline{8-9} \cline{11-12}
$l$ & $\text{MAE}_l$ & $r_l^{\infty}$ & &$\text{MAE}_l$ & $r_l^{\infty}$ & & $\text{RMSE}_l$ & $r_l^{2}$ & &$\text{RMSE}_l$ & $r_l^{2}$       \\
\hline
$2$  & 1.9404e+00 &       & & 1.9404e+00 &       && 1.7834e-01 &       && 1.9074e-01 &        \\
$3$  & 2.6038e-01 & 2.8977& & 2.6038e-01 & 2.8977&& 1.6501e-02 & 3.4340&& 2.1348e-02 & 3.1594 \\
$4$  & 5.8785e-02 & 2.1471& & 5.8897e-02 & 2.1443&& 2.2947e-03 & 2.8462&& 2.7146e-03 & 2.9753 \\
$5$  & 3.6878e-03 & 3.9946& & 2.5759e-03 & 4.5150&& 1.3960e-04 & 4.0389&& 1.5111e-04 & 4.1671 \\
\hline
\hline
& \multicolumn{2}{c}{PU-MLS$^3_{\text{W4}}$} & &   \multicolumn{2}{c}{{\color{black} DDPU}-MLS$^3_{\text{W4}}$}& &\multicolumn{2}{c}{PU-MLS$^3_{\text{W4}}$} & &   \multicolumn{2}{c}{{\color{black} DDPU}-MLS$^3_{\text{W4}}$}\\ \cline{1-3} \cline{5-6} \cline{8-9} \cline{11-12}
$l$ & $\text{MAE}_l$ & $r_l^{\infty}$ & &$\text{MAE}_l$ & $r_l^{\infty}$ & & $\text{RMSE}_l$ & $r_l^{2}$ & &$\text{RMSE}_l$ & $r_l^{2}$       \\
\hline
$2$  & 1.9421e+00 &       & & 1.9421e+00 &       && 1.8229e-01 &       && 1.8919e-01 &        \\
$3$  & 3.2329e-01 & 2.5867& & 2.6038e-01 & 2.8989&& 1.7048e-02 & 3.4186&& 2.0985e-02 & 3.1725 \\
$4$  & 5.8884e-02 & 2.4569& & 5.8897e-02 & 2.1443&& 2.1875e-03 & 2.9623&& 2.4949e-03 & 3.0723 \\
$5$  & 4.3867e-03 & 3.7467& & 2.5759e-03 & 4.5150&& 1.4931e-04 & 3.8729&& 1.4366e-04 & 4.1182 \\
\hline
\hline
& \multicolumn{2}{c}{PU-MLS$^3_{\text{G}}$} & &   \multicolumn{2}{c}{{\color{black} DDPU}-MLS$^3_{\text{G}}$}& &\multicolumn{2}{c}{PU-MLS$^3_{\text{G}}$} & & \multicolumn{2}{c}{{\color{black} DDPU}-MLS$^3_{\text{G}}$}\\ \cline{1-3} \cline{5-6} \cline{8-9} \cline{11-12}
$l$ & $\text{MAE}_l$ & $r_l^{\infty}$ & &$\text{MAE}_l$ & $r_l^{\infty}$ & & $\text{RMSE}_l$ & $r_l^{2}$ & &$\text{RMSE}_l$ & $r_l^{2}$       \\ \hline
$2$  & 5.4493e+00 &   & & 5.4463e+00 &   && 7.5698e-01 &   && 7.7851e-01 &     \\
$3$  & 2.9662e-01 & 4.1994& & 3.6358e+00 & 0.5830&& 4.4877e-02 & 4.0762&& 1.8134e-01 & 2.1020  \\
$4$  & 1.4680e-02 & 4.3367& & 8.0028e-02 & 5.5056&& 1.8423e-03 & 4.6064&& 9.2098e-03 & 4.2994  \\
$5$  & 2.1421e-03 & 2.7768& & 1.4321e-02 & 2.4824&& 1.5831e-04 & 3.5407&& 6.7622e-04 & 3.7676  \\
\hline
\end{tabular}
\end{center}
\caption{\scriptsize{Errors and rates using linear and {\color{black} data-dependent} PU-MLS methods (degree 3) for the provided test on Halton's points.}}\label{exp24}
\end{table}

Again, when the same experiments are repeated on non-uniform grids using Halton's points in $[0,1]^2$ (see Tables \ref{exp23} and \ref{exp24}), the observed orders of accuracy remain consistent with the theoretical estimates (see Th.~\ref{orderL} and Prop.~\ref{orderNL}). The convergence behavior of the two algorithms is analogous, while the non-linear variant produces marginally lower errors.

In summary, for both $p=2$ and $p=3$, and independently of whether the data are distributed on a uniform grid or on Halton's points, the linear and {\color{black} data-dependent} approaches exhibit similar performance in terms of accuracy and convergence rate. Nevertheless, the {\color{black} DDPU}-MLS method generally attains slightly better accuracy, even when the target function is smooth. 

{\color{black} It is important to emphasize that the goal of the {\color{black} DDPU}--MLS formulation is not to improve the approximation accuracy for smooth functions. In such cases, the linear PU--MLS method is already optimal, and the non-linear weights are designed to recover the same asymptotic behavior. The results reported in Tables~\ref{exp11} -- \ref{exp24} confirm that the proposed {\color{black} data-dependent} approach preserves the expected convergence rates and accuracy in smooth regions, without introducing any degradation.}

In the next subsection, the performance of the algorithms is examined in the presence of data exhibiting strong gradients or discontinuities.



\subsection{Avoiding oscillations close to the discontinuities} 
{\color{black} While the numerical results for smooth test functions show comparable behavior for PU--MLS and {\color{black} DDPU}--MLS, the differences between the two methods become substantial when discontinuities are present.} In this subsection, we perform several experiments starting with the piecewise smooth function ${\color{black} f_2}$, Eq. \eqref{frank}, using Eq. \eqref{Franke} defined in Section \ref{intro}.

Our goal is to compare the linear and {\color{black} data-dependent} PU-MLS methods. {\color{black} We use a uniform evaluation grid consisting of $N_{\mathrm{eval}} = 120^2$ points in the square $[0,1]^2$} and employ the Wendland $\mathcal{C}^2$ and $\mathcal{C}^4$ functions{\color{black}, using the shape parameters $\gamma_k$ defined in the previous subsection.} The results, together with the approximation errors between the original function and the approximated one for each method, are presented in Figures \ref{example1W2} and \ref{example1W4}. We observe that the {\color{black} data-dependent} method exhibits appropriate behavior near the discontinuities, whereas the linear method generates noticeable oscillations. When the figure is rotated (second column of the figures), these differences become even more pronounced. The error plots in the third column support this observation: the linear PU-MLS method produces a broader region of elevated error near the discontinuity, indicating smearing. In contrast, the {\color{black} DDPU}-MLS method yields a more confined error distribution, preserving the discontinuous structure more effectively. This observation will be further corroborated with additional numerical examples. 

 In order to analyze the behaviour when the discontinuity is more pronounced, we choose the function \eqref{Trig} (used in \cite{ALRY}), first using the Wendland $\mathcal{C}^2$ function (Figure \ref{example2W2}) and after using the Wendland $\mathcal{C}^4$ function (Figure \ref{example2W4}). 

 \begin{equation}\label{Trig}
g(x,y) = 
\begin{cases}
\sin(xy), & (x - 0.5)^2 + (y - 0.5)^2 \geq 0.25^2, \\
\cos(xy), & (x - 0.5)^2 + (y - 0.5)^2 < 0.25^2.
\end{cases}
\end{equation}

Some undesirable effects and smearing of the discontinuities appear when linear methods are used, which are reduced when the {\color{black} data-dependent} algorithms are employed. Moreover, as the discontinuity becomes more pronounced, these differences are even more apparent compared to the previous example. The error plots confirm this behavior: the linear method produces a wider region of elevated error around the discontinuity, while the non-linear method maintains a more localized and accurate reconstruction.



 Now, we study function \eqref{Trig2} to confirm that the constructed method avoids the Gibbs phenomenon, so that oscillations near the discontinuity and image diffusion are reduced. We can see the results obtained in Figures \ref{example3W2} and \ref{example3W4}, using respectively the radial basis functions Wendland $\mathcal{C}^2$ and $\mathcal{C}^4$ functions.

 \begin{equation} \label{Trig2}
 h(x,y) = 
\begin{cases}
y\sin(x)+y\cos(x), & (x - 0.5)^2 + (y - 0.5)^2 \geq 0.25^2, \\
\exp(xy)+1, & (x - 0.5)^2 + (y - 0.5)^2 < 0.25^2.
\end{cases}    
 \end{equation}

 As observed in the previous examples, the linear PU-MLS method produces oscillations close to the discontinuity when applied to the function \eqref{Trig2}. In contrast, the {\color{black} data-dependent} formulation effectively suppresses these artifacts, yielding a more accurate and stable approximation across the two regions. The error remains more localized in the non-linear method, confirming its ability to preserve discontinuities and reduce oscillations.




 Finally, we test \eqref{Trig3}, which includes both positive and negative jumps across the discontinuity.

  \begin{equation} \label{Trig3}
 j(x,y) = 
\begin{cases}
-(x+y+1)\cos(4x)+\sin(4(x+y)), & (x - 0.5)^2 + (y - 0.5)^2 \geq 0.1, \\
\exp(-10((x-0.5)^2+(y-0.5)^2)), & (x - 0.5)^2 + (y - 0.5)^2 < 0.1.
\end{cases}    
 \end{equation}



Figures \ref{example4W2} and \ref{example4W4} illustrate the comparison between the linear and non-linear methods. As in the previous cases, the {\color{black} data-dependent} approach consistently reduces error near discontinuities, offering a more stable and accurate approximation. These results confirm that undesired phenomena such as oscillations and smearing are effectively mitigated when the {\color{black} DDPU}-MLS method is applied.


\section{Conclusions} \label{conc}

In this work, we have proposed a {\color{black} data-dependent} enhancement of the Partition of Unity Moving Least Squares method ({\color{black} DDPU}-MLS). To the best of our knowledge, this is the first attempt in the literature to incorporate non-linear strategies within the PU-MLS framework. The motivation behind this development is the observation that the classical PUMLS scheme, although highly accurate in smooth regions, may lead to oscillatory behavior or loss of stability under demanding conditions. The {\color{black} DDPU}-MLS formulation introduces a modified weighting strategy that adapts locally, with the goal of reinforcing stability while preserving approximation quality.

The numerical experiments first confirm that, for smooth functions, the observed order of accuracy aligns with the theoretical predictions, with both PU-MLS and {\color{black} DDPU}-MLS achieving the expected convergence rates. More importantly, for functions featuring pronounced discontinuities, the {\color{black} data-dependent} method effectively suppresses the oscillations near the jumps that are present when the linear PU-MLS is applied. This indicates that the {\color{black} DDPU}-MLS approach successfully mitigates the Gibbs phenomenon while maintaining the approximation quality in smooth regions.

Overall, the results indicate that {\color{black} DDPU}-MLS is a reliable alternative to the standard PUMLS method. While both approaches display almost identical convergence properties in smooth test cases, the non-linear mechanism provides additional robustness that can be advantageous in more complex scenarios. Future research will focus on assessing the behavior of {\color{black} DDPU}-MLS in problems involving irregular data distributions or discontinuities, and on further refining the adaptive weighting strategy to reduce the need for manual parameter adjustment.


\section*{Conflict of Interest} 
The authors state that there are no conflicts of interest associated with this work.

\section*{Declaration of the Use of Generative AI and AI-Assisted Tools in the Writing Process} During the preparation of this work, the authors utilized Microsoft Copilot solely to assist with spelling and grammatical corrections. 
All content was subsequently reviewed and edited by the authors, who take full responsibility for the accuracy and integrity of the final publication.

\bibliographystyle{these}

\newpage
\begin{figure}[H]
\centering
    \begin{tabular}{ccc}
\multicolumn{3}{c}{\scriptsize{PU-MLS}}\\
         \includegraphics[width=0.3\hsize]{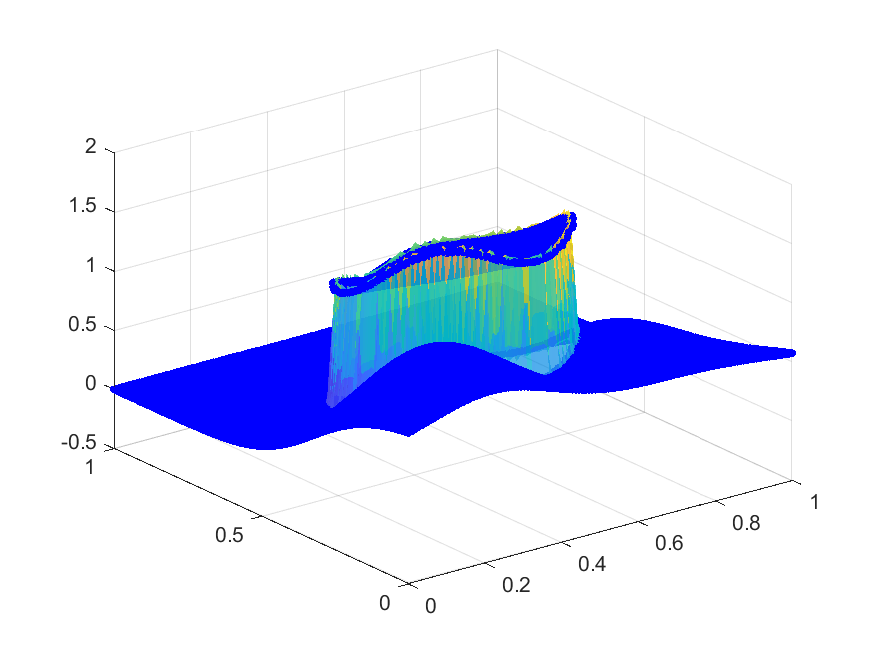} & \includegraphics[width=0.3\hsize]{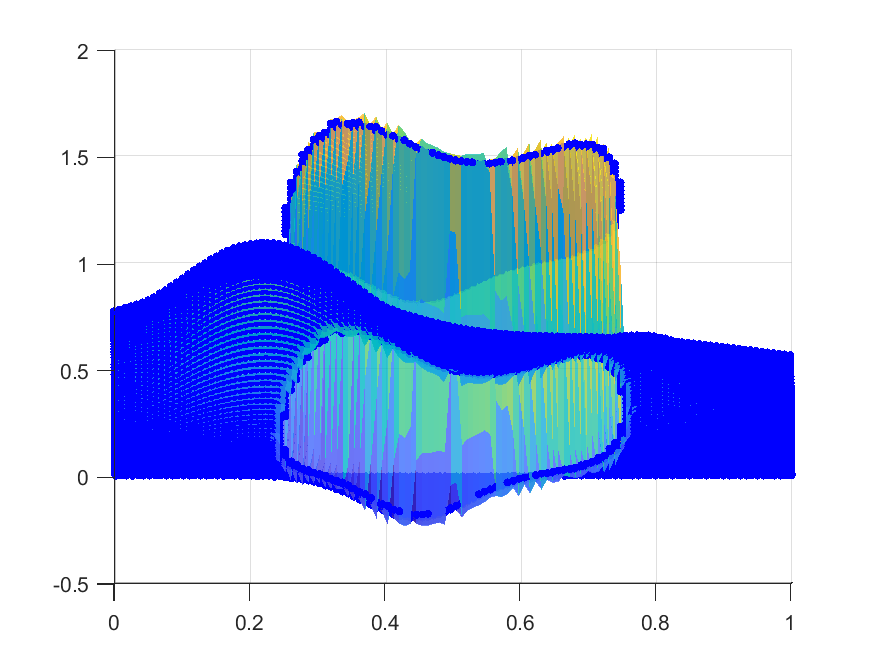} & \includegraphics[width=0.3\hsize]{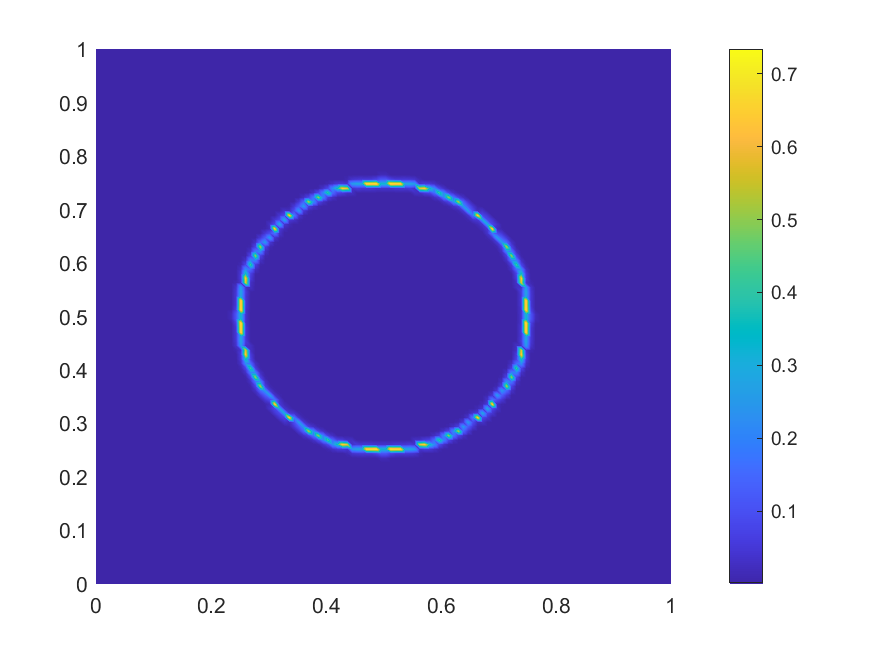}\\
\multicolumn{3}{c}{\scriptsize{{\color{black} DDPU}-MLS}}\\
         \includegraphics[width=0.3\hsize]{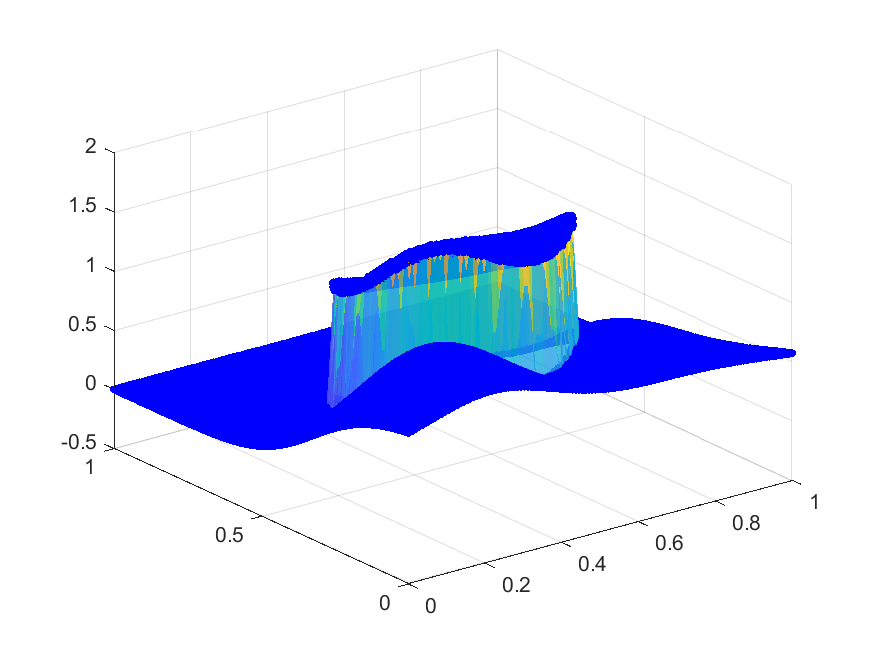} & \includegraphics[width=0.3\hsize]{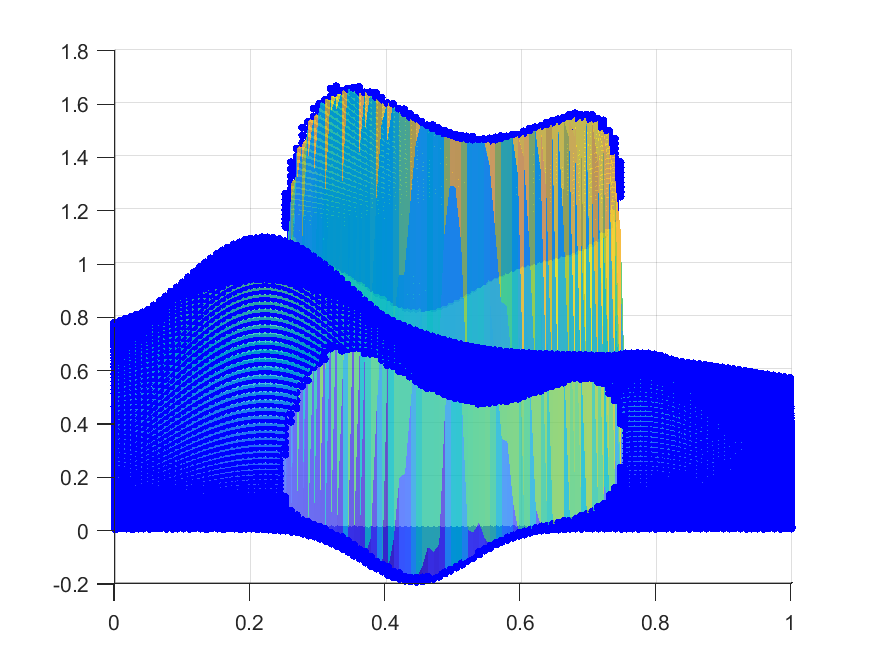}  & \includegraphics[width=0.3\hsize]{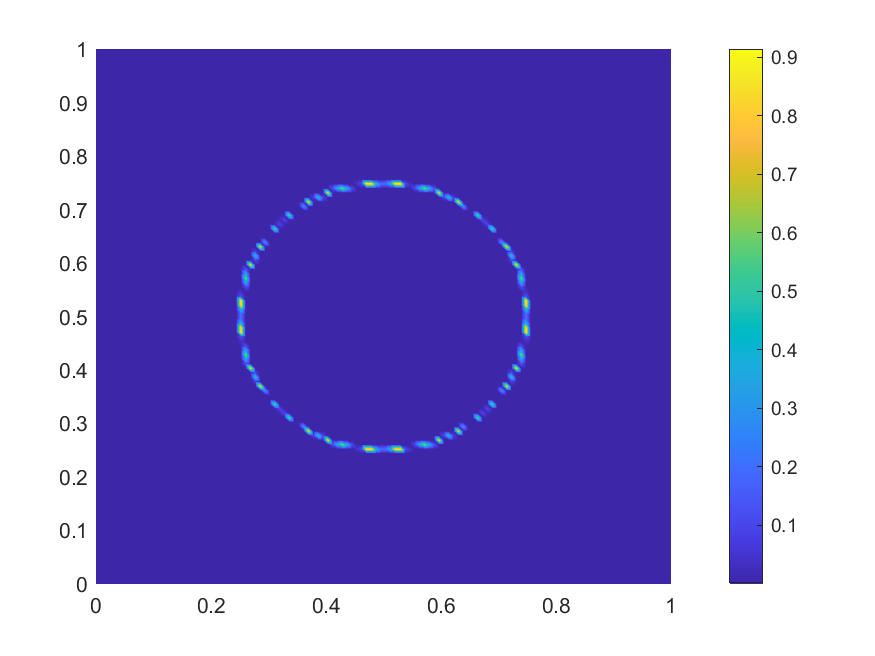}\\
 \end{tabular}
 \caption{\scriptsize{Approximation of the function {\color{black} $f_2$}, Eq. \eqref{frank}, using PU-MLS and {\color{black} DDPU}-MLS with the Wendland $\mathcal{C}^2$ function. The second column shows rotated views of the plots in the first column. The third column shows the errors between the original function and its approximation.}}
 \label{example1W2}
 \end{figure}

 \begin{figure}[H]
\centering
    \begin{tabular}{ccc}
\multicolumn{3}{c}{\scriptsize{PU-MLS}}\\
         \includegraphics[width=0.3\hsize]{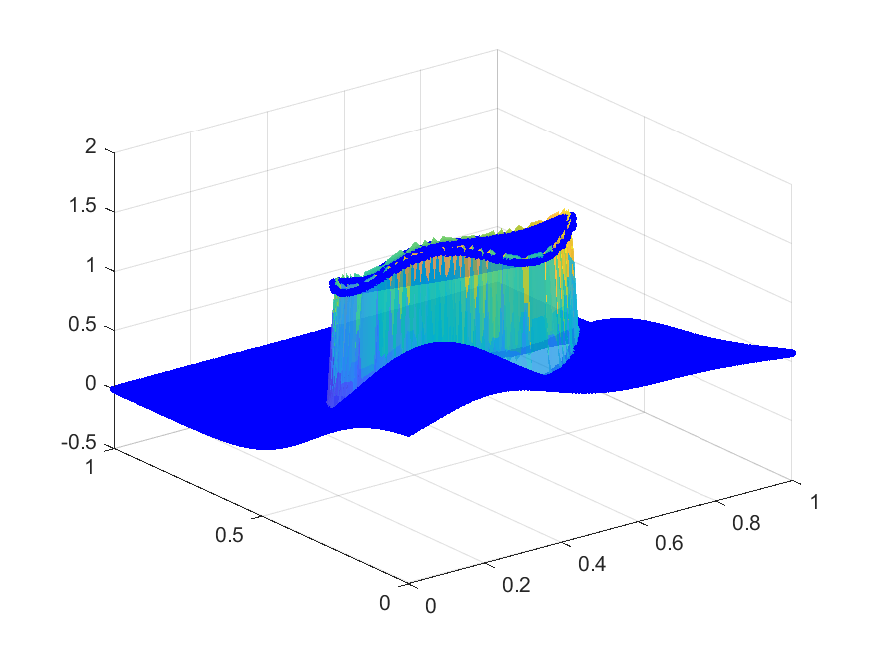} & \includegraphics[width=0.3\hsize]{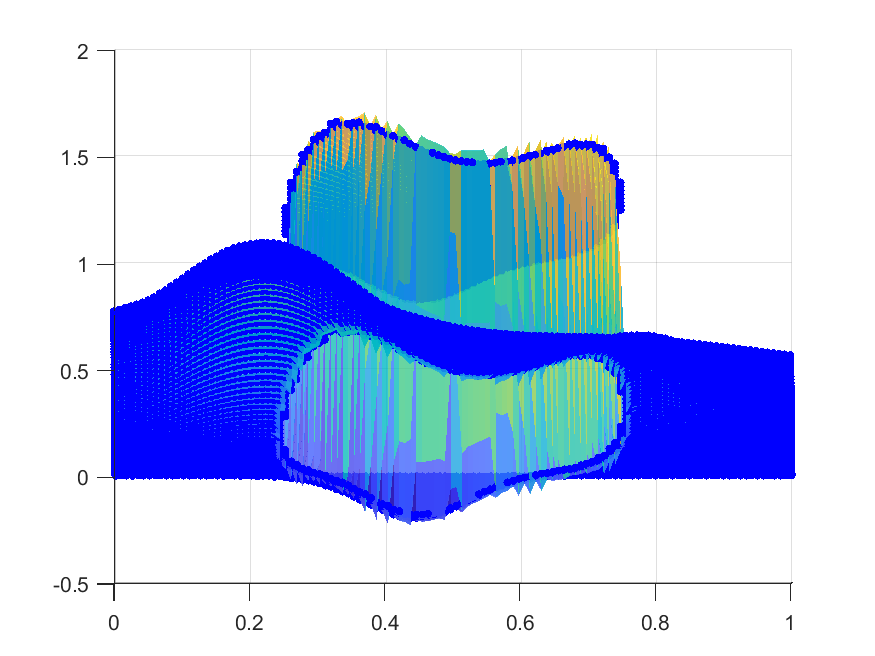} & \includegraphics[width=0.3\hsize]{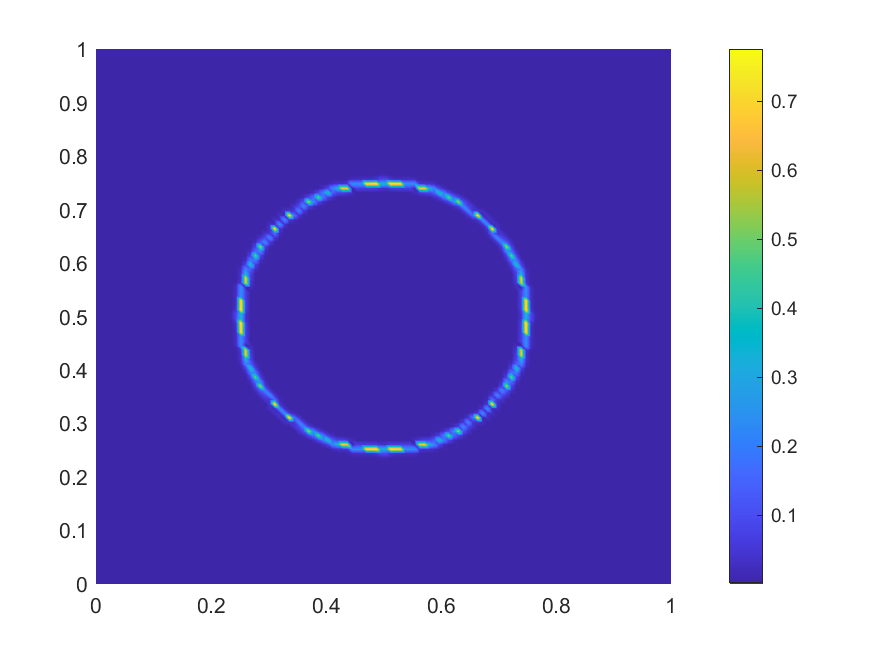}\\
\multicolumn{3}{c}{\scriptsize{{\color{black} DDPU}-MLS}}\\
         \includegraphics[width=0.3\hsize]{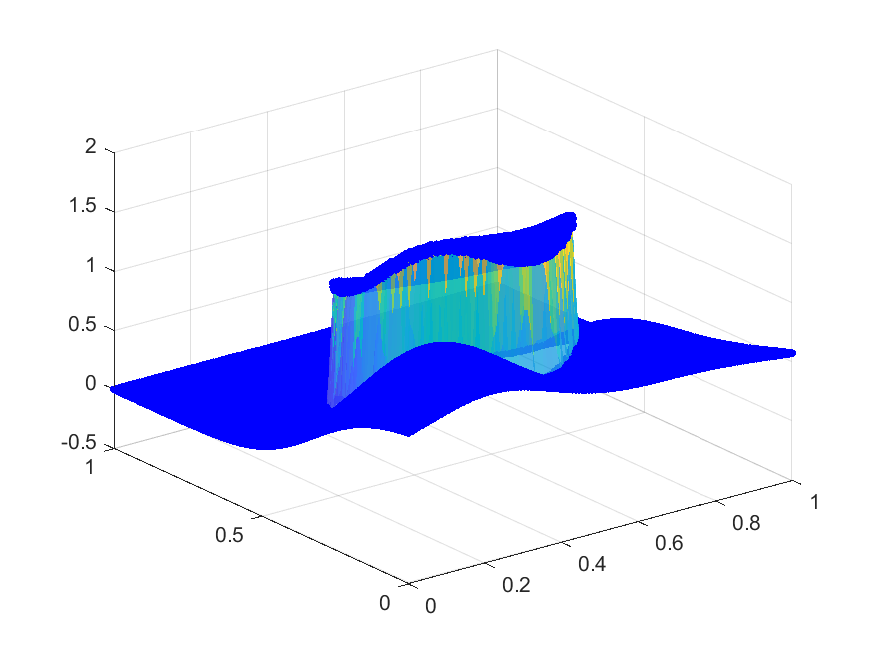} & \includegraphics[width=0.3\hsize]{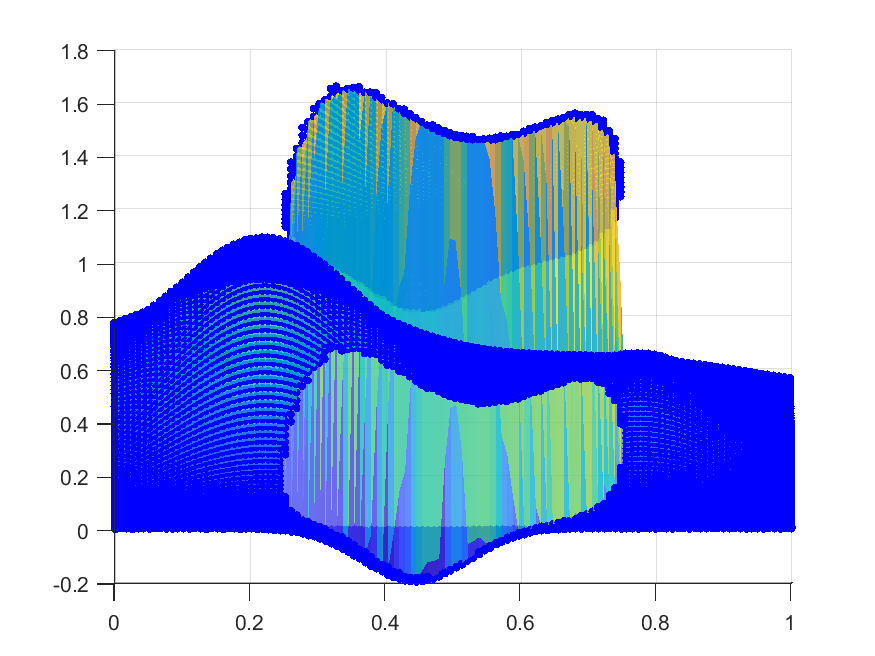} & \includegraphics[width=0.3\hsize]{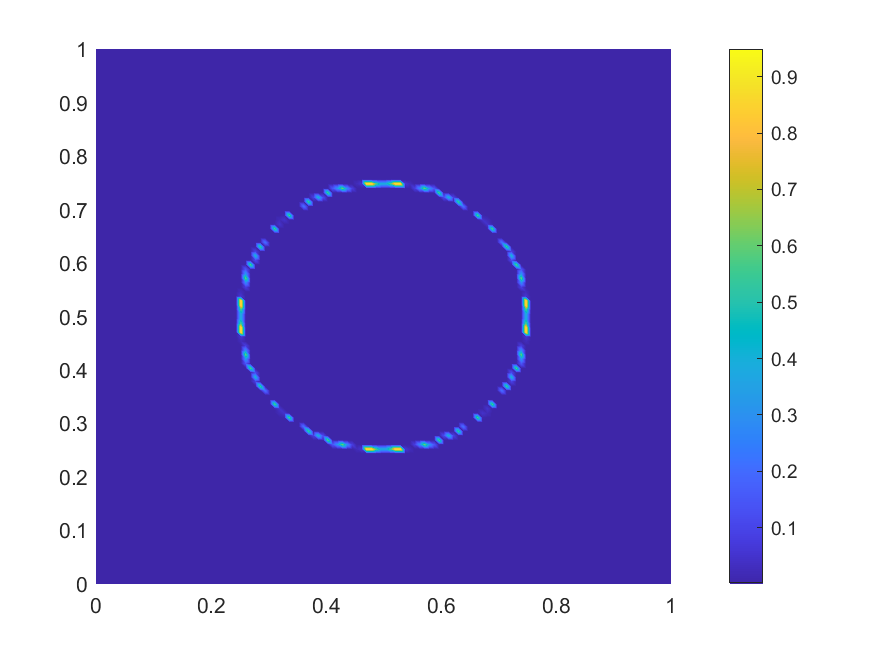}\\
 \end{tabular}
 \caption{\scriptsize{Approximation of the function {\color{black} $f_2$}, Eq. \eqref{frank}, using PU-MLS and {\color{black} DDPU}-MLS with the Wendland $\mathcal{C}^4$ function. The second column shows rotated views of the plots in the first column. The third column shows the errors between the original function and its approximation.}}
 \label{example1W4}
 \end{figure}

 \begin{figure}[H]
\centering
    \begin{tabular}{ccc}
\multicolumn{3}{c}{\scriptsize{PU-MLS}}\\
         \includegraphics[width=0.3\hsize]{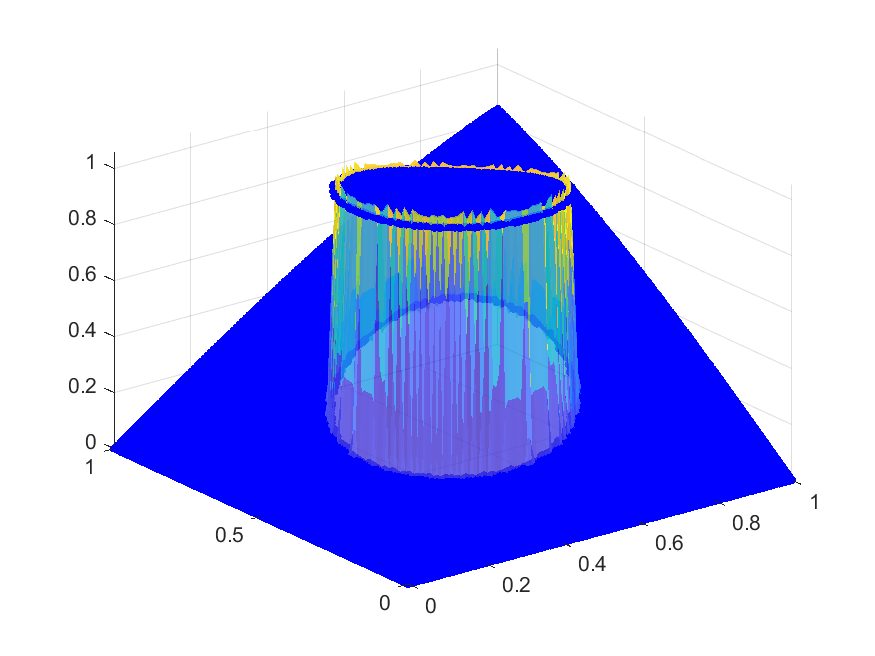} & \includegraphics[width=0.3\hsize]{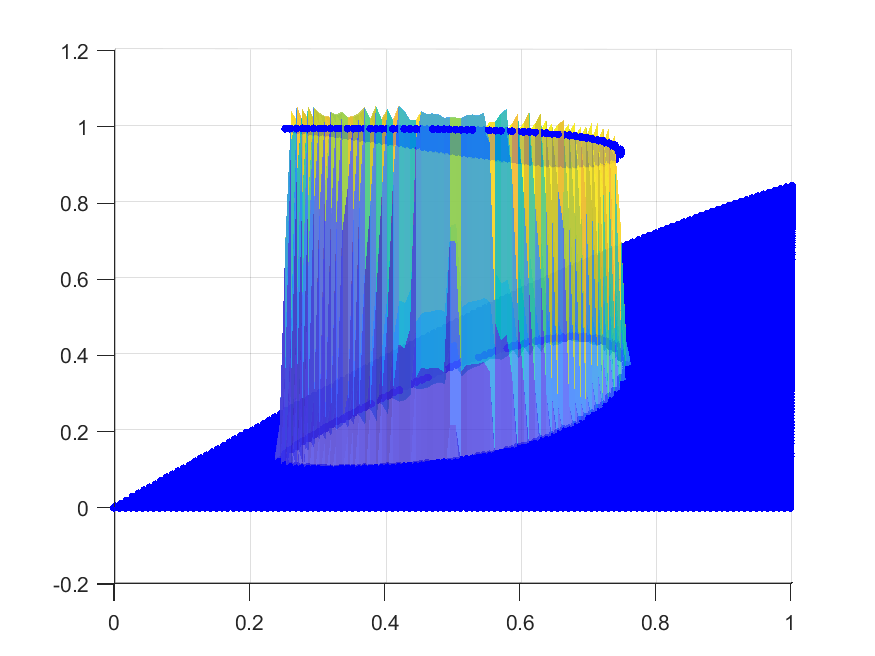} & \includegraphics[width=0.3\hsize]{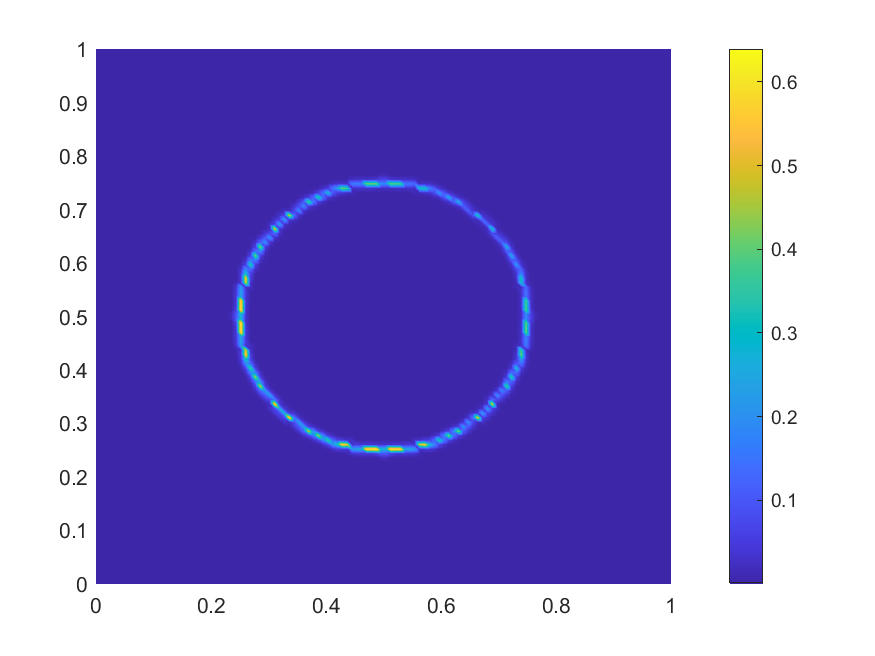}\\
\multicolumn{3}{c}{\scriptsize{{\color{black} DDPU}-MLS}}\\
         \includegraphics[width=0.3\hsize]{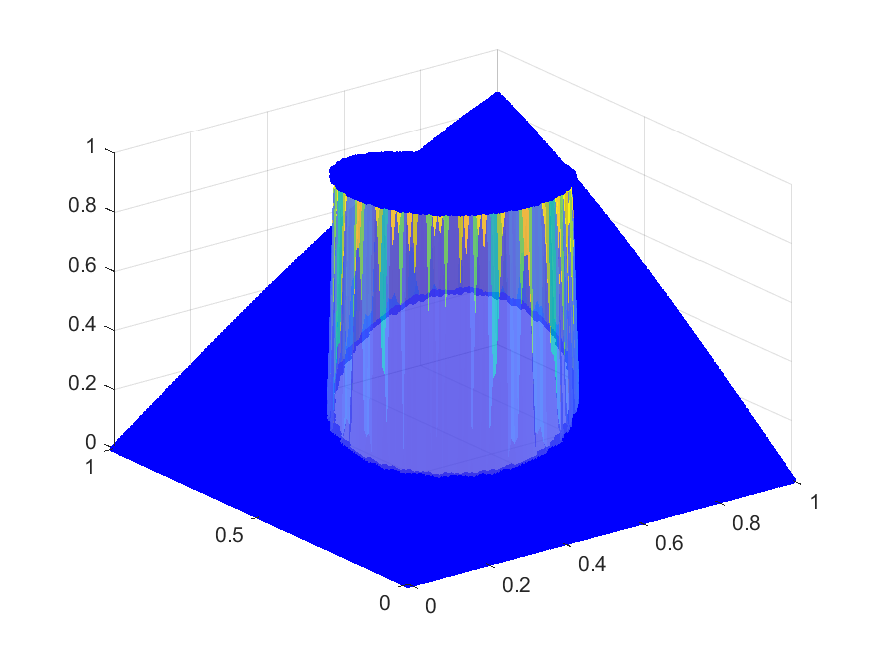} & \includegraphics[width=0.3\hsize]{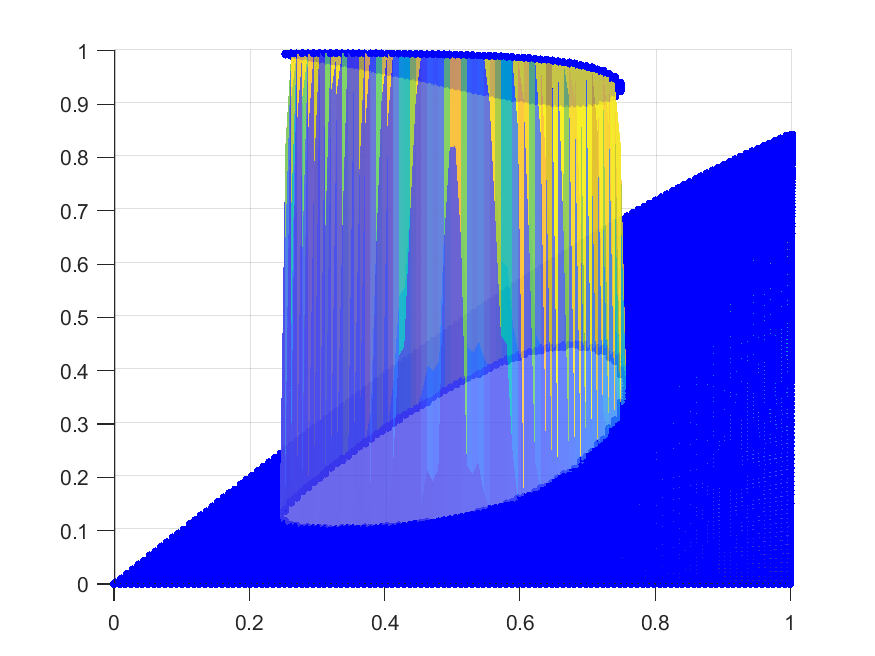}  & \includegraphics[width=0.3\hsize]{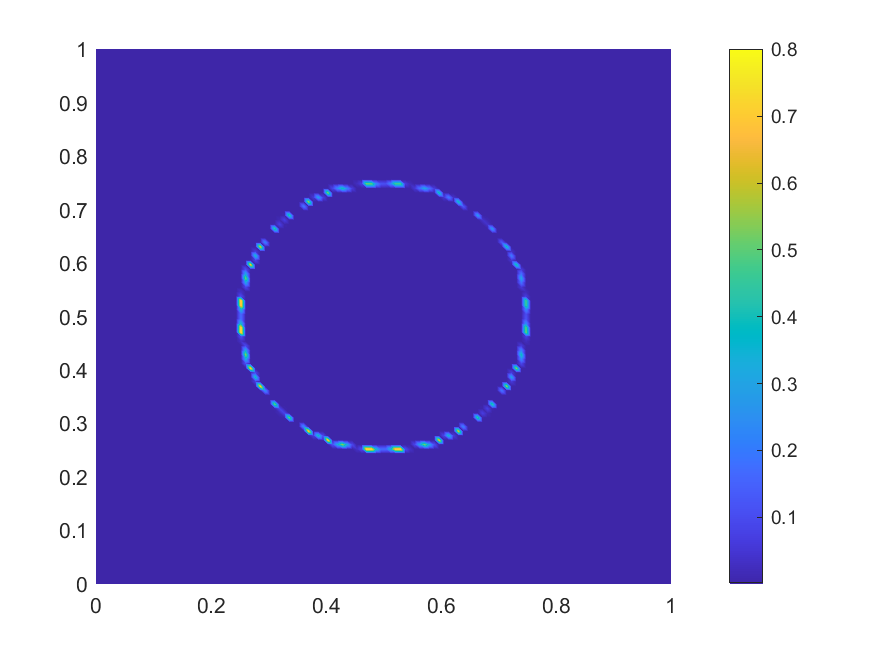}\\
 \end{tabular}
 \caption{\scriptsize{Approximation of the function $g$, Eq. \eqref{Trig}, using PU-MLS and {\color{black} DDPU}-MLS with the Wendland $\mathcal{C}^2$ function. The second column shows rotated views of the plots in the first column. The third column shows the errors between the original function and its approximation.}}
 \label{example2W2}
 \end{figure}

 \begin{figure}[H]
\centering
    \begin{tabular}{ccc}
\multicolumn{3}{c}{\scriptsize{PU-MLS}}\\
         \includegraphics[width=0.3\hsize]{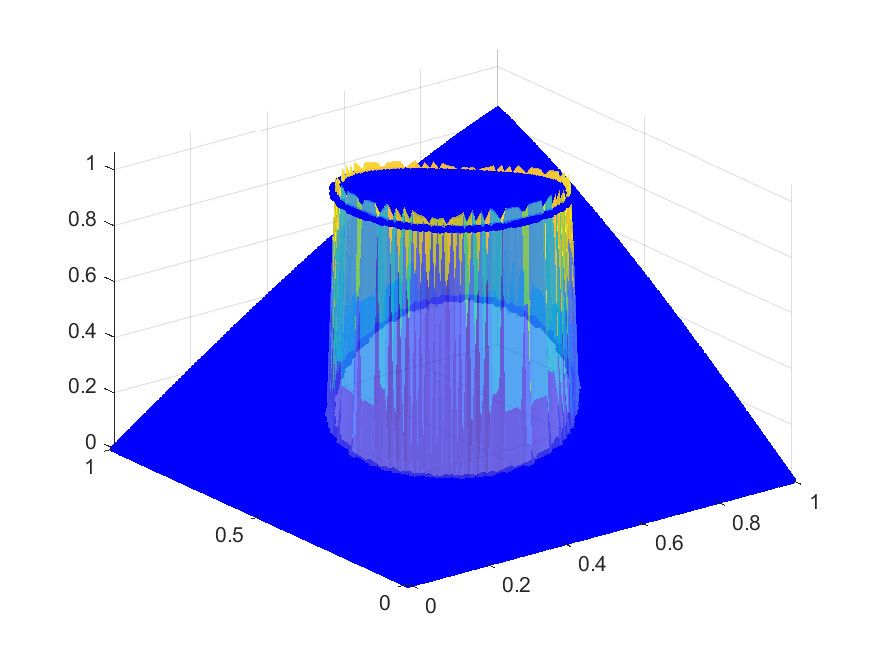} & \includegraphics[width=0.3\hsize]{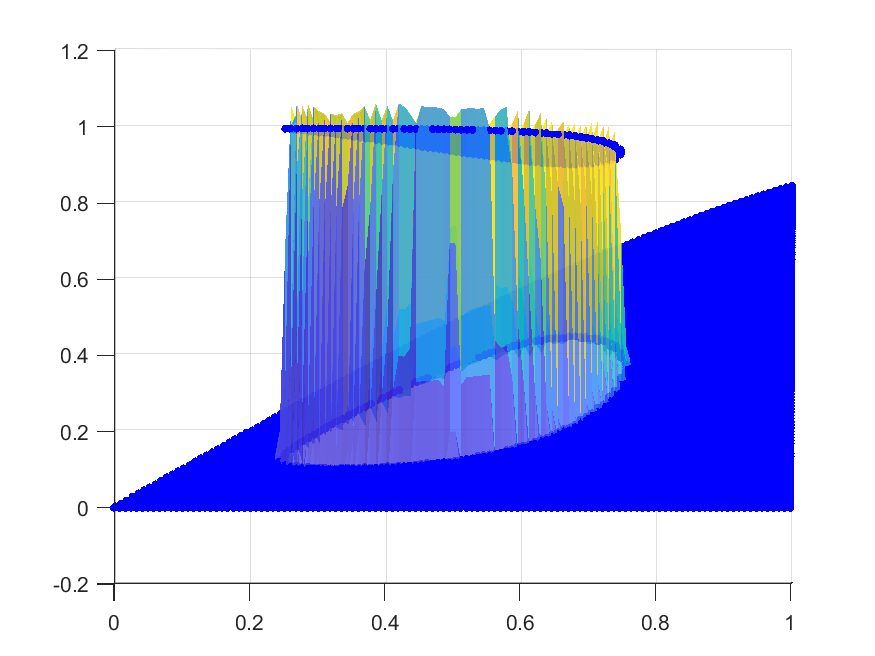} & \includegraphics[width=0.3\hsize]{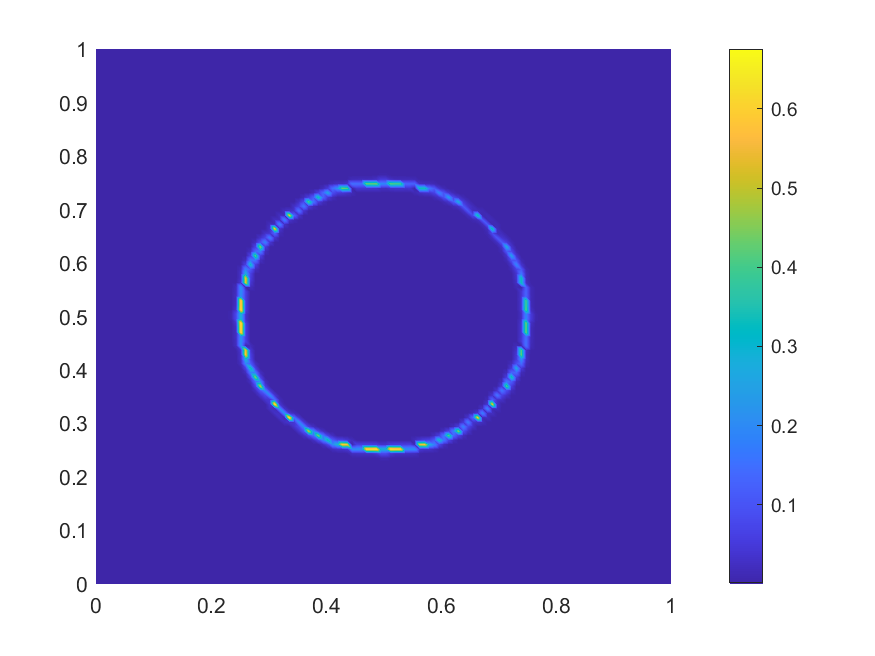}\\
\multicolumn{3}{c}{\scriptsize{{\color{black} DDPU}-MLS}}\\
         \includegraphics[width=0.3\hsize]{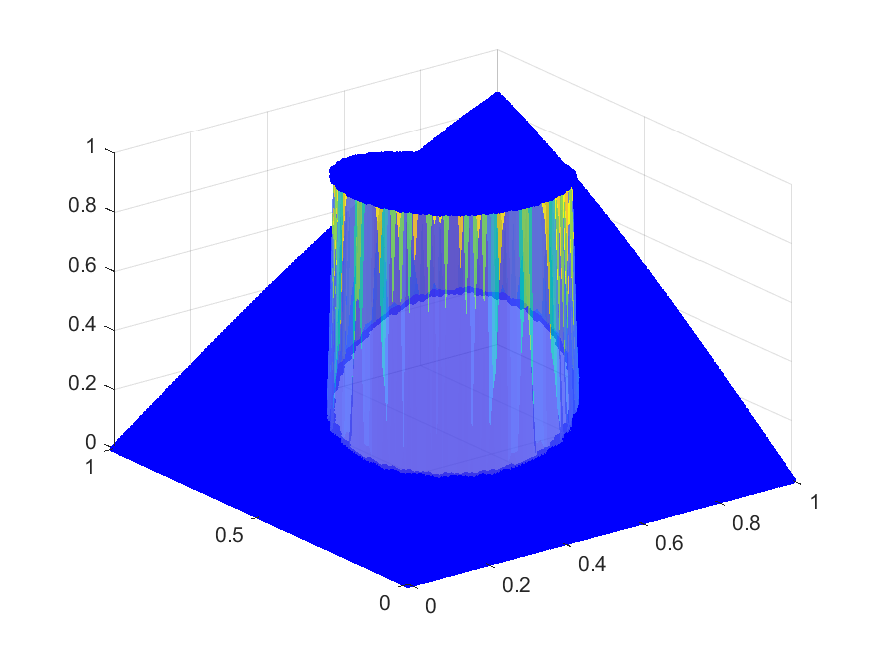} & \includegraphics[width=0.3\hsize]{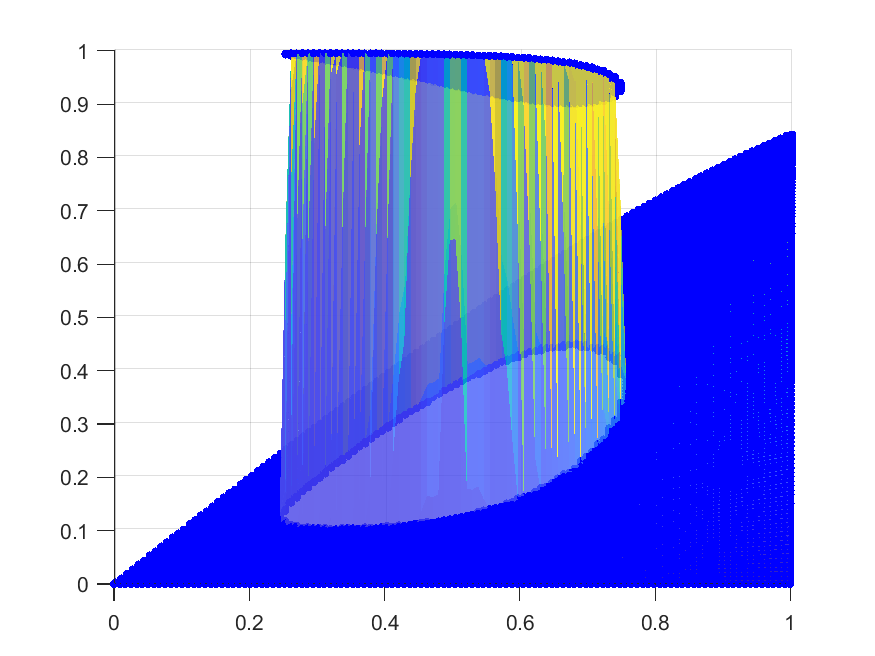} & \includegraphics[width=0.3\hsize]{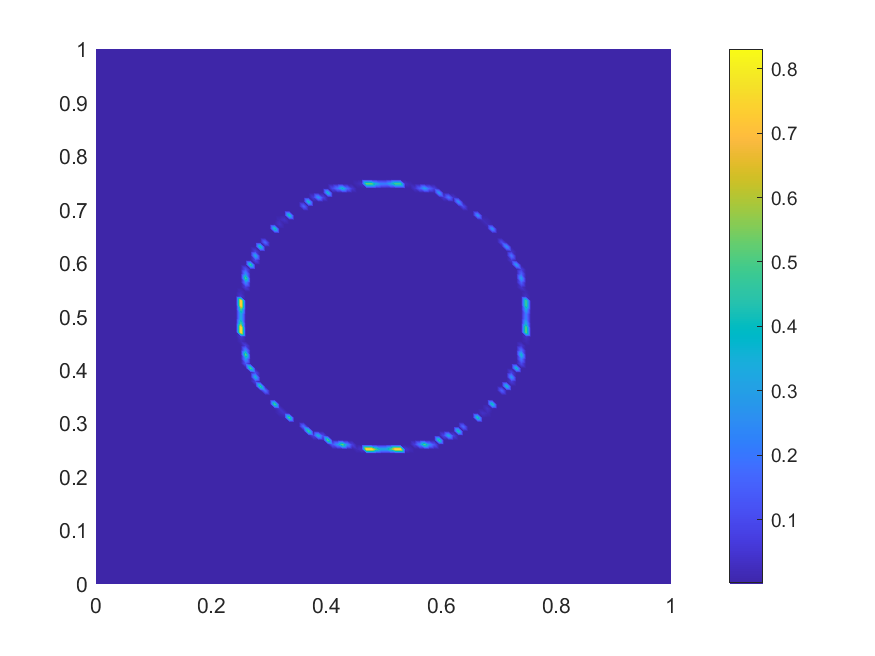}\\
 \end{tabular}
 \caption{\scriptsize{Approximation of the function $g$, Eq. \eqref{Trig}, using PU-MLS and {\color{black} DDPU}-MLS with the Wendland $\mathcal{C}^4$ function. The second column shows rotated views of the plots in the first column. The third column shows the errors between the original function and its approximation.}}
 \label{example2W4}
 \end{figure}

 \begin{figure}[H]
\centering
    \begin{tabular}{ccc}
\multicolumn{3}{c}{\scriptsize{PU-MLS}}\\
         \includegraphics[width=0.3\hsize]{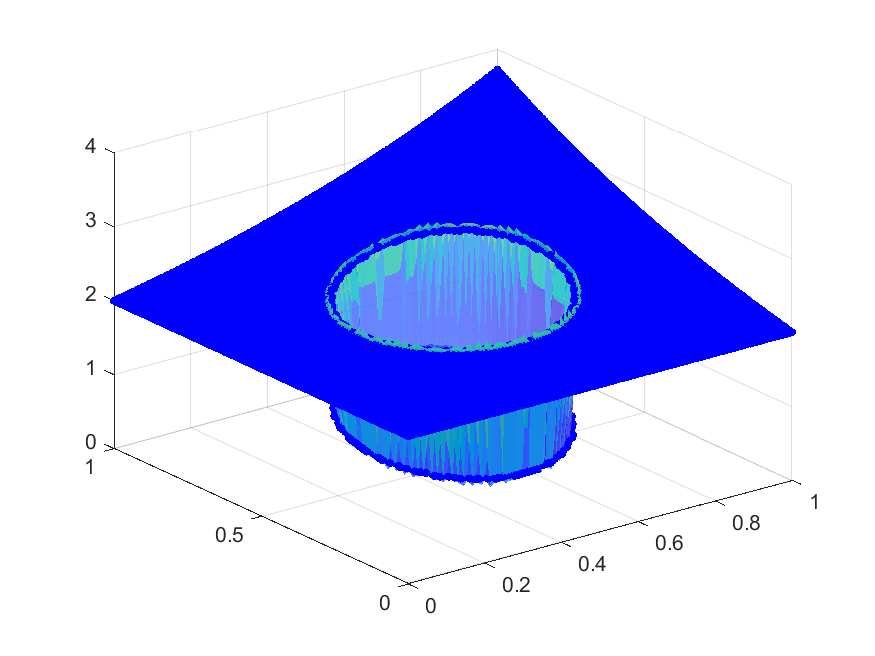} & \includegraphics[width=0.3\hsize]{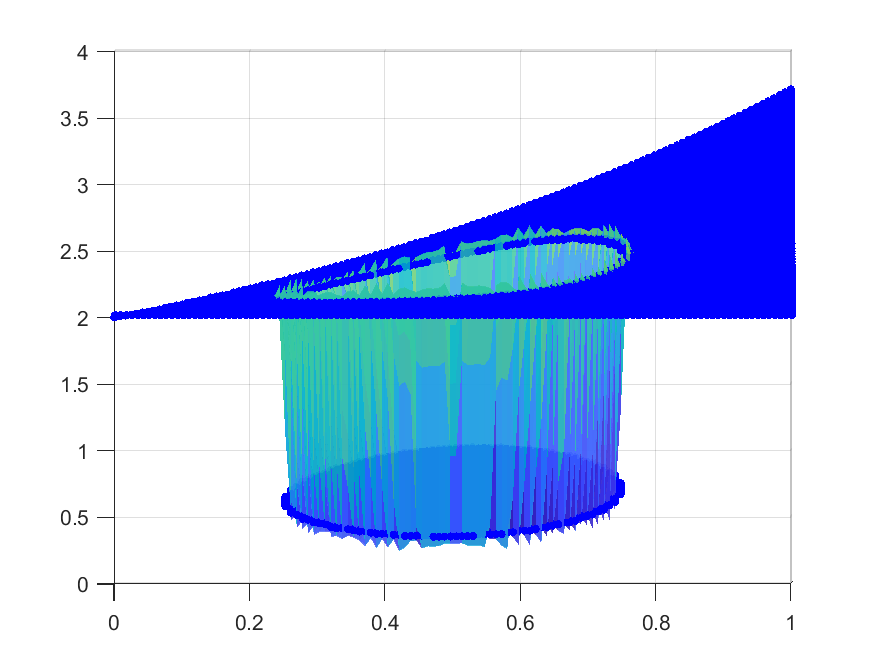} & \includegraphics[width=0.3\hsize]{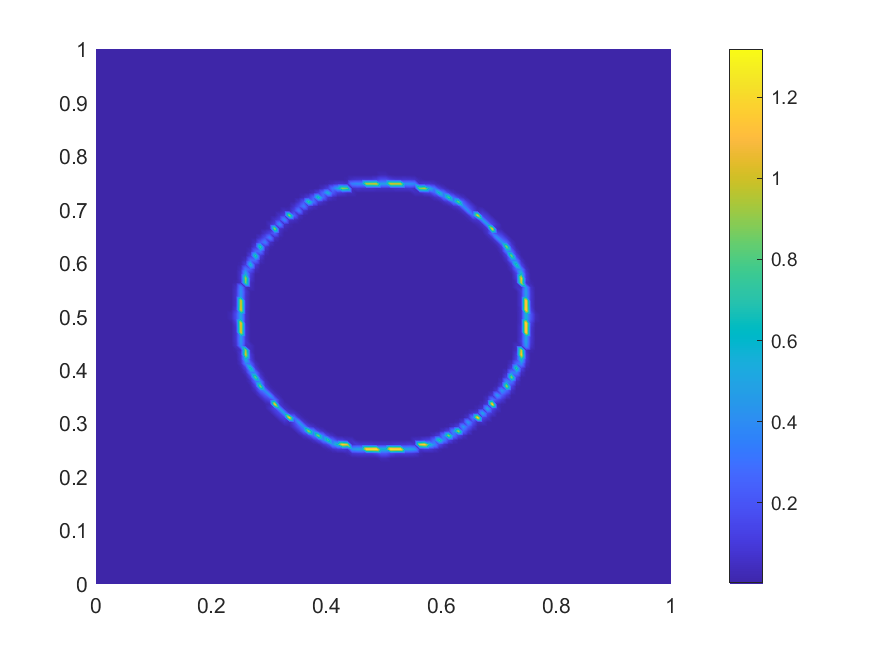}\\
\multicolumn{3}{c}{\scriptsize{{\color{black} DDPU}-MLS}}\\
         \includegraphics[width=0.3\hsize]{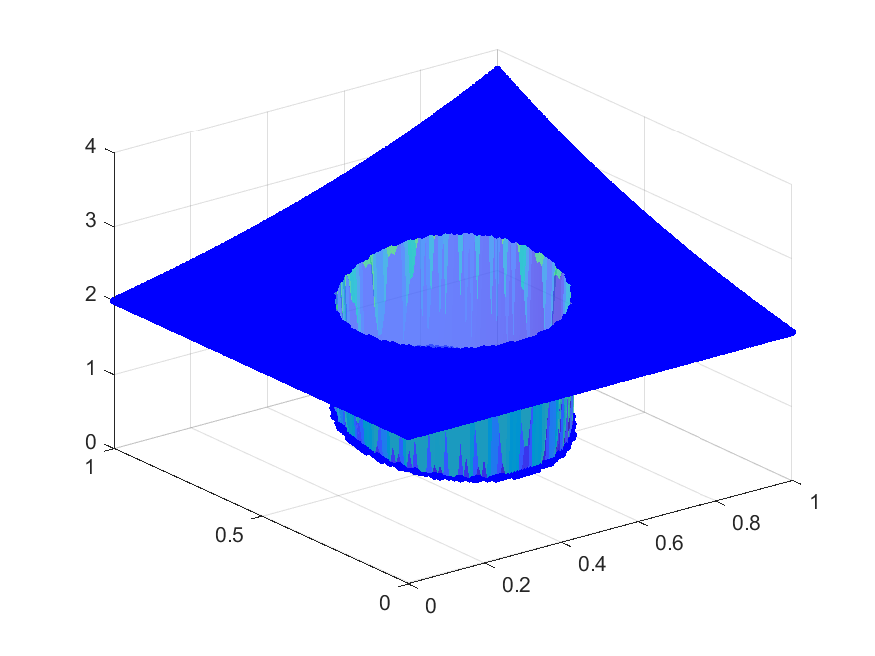} & \includegraphics[width=0.3\hsize]{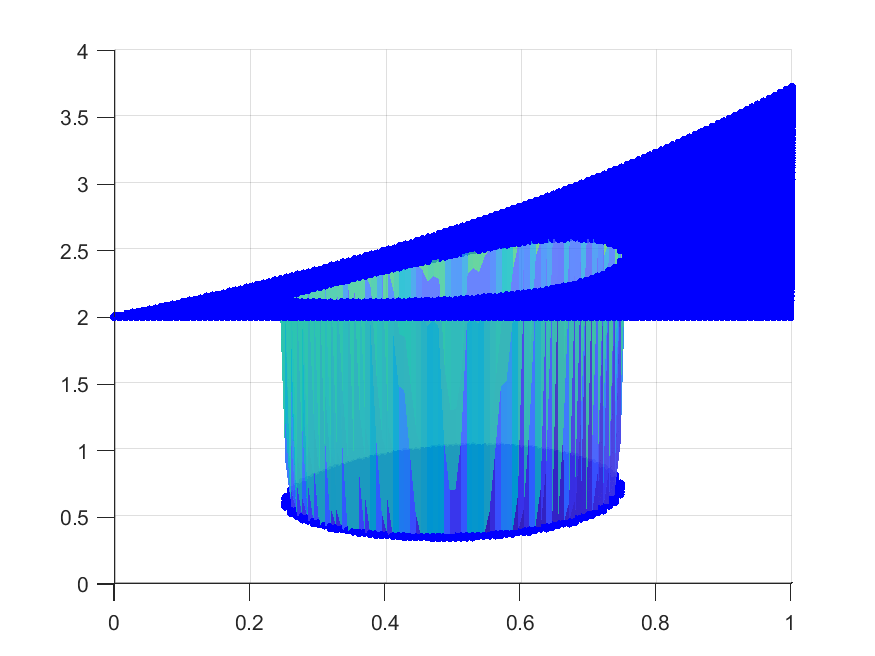} & \includegraphics[width=0.3\hsize]{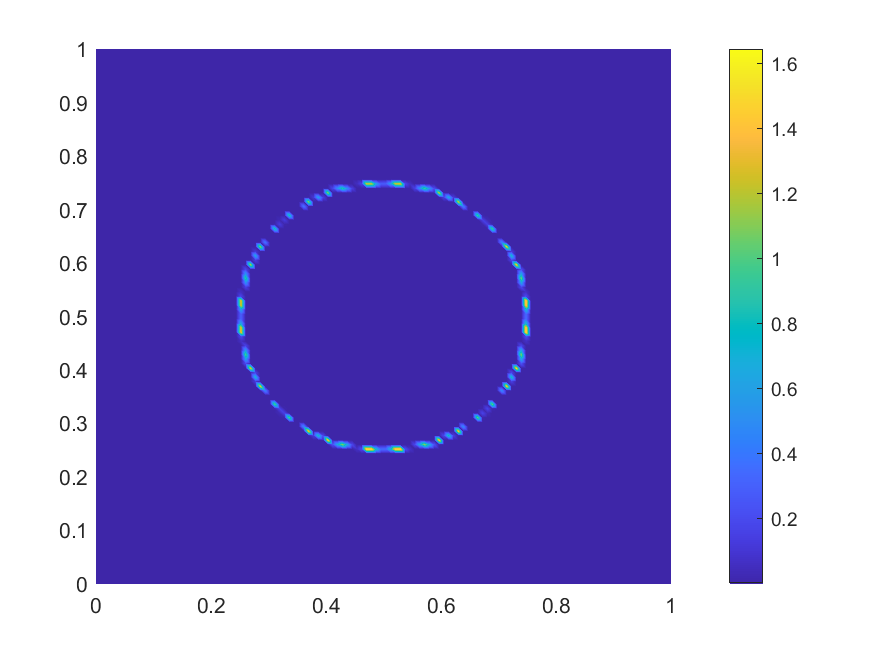}\\
 \end{tabular}
 \caption{\scriptsize{Approximation of the function $h$, Eq. \eqref{Trig2}, using PU-MLS and {\color{black} DDPU}-MLS with the Wendland $\mathcal{C}^2$ function. The second column shows rotated views of the plots in the first column. The third column shows the errors between the original function and its approximation.}}
 \label{example3W2}
 \end{figure}

 \begin{figure}[H]
\centering
    \begin{tabular}{ccc}
\multicolumn{3}{c}{\scriptsize{PU-MLS}}\\
         \includegraphics[width=0.3\hsize]{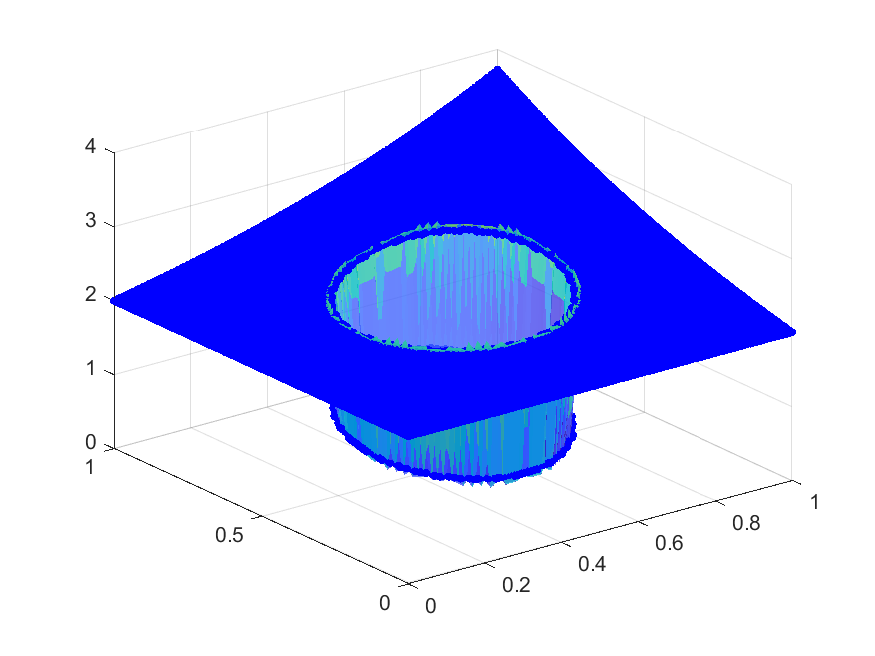} & \includegraphics[width=0.3\hsize]{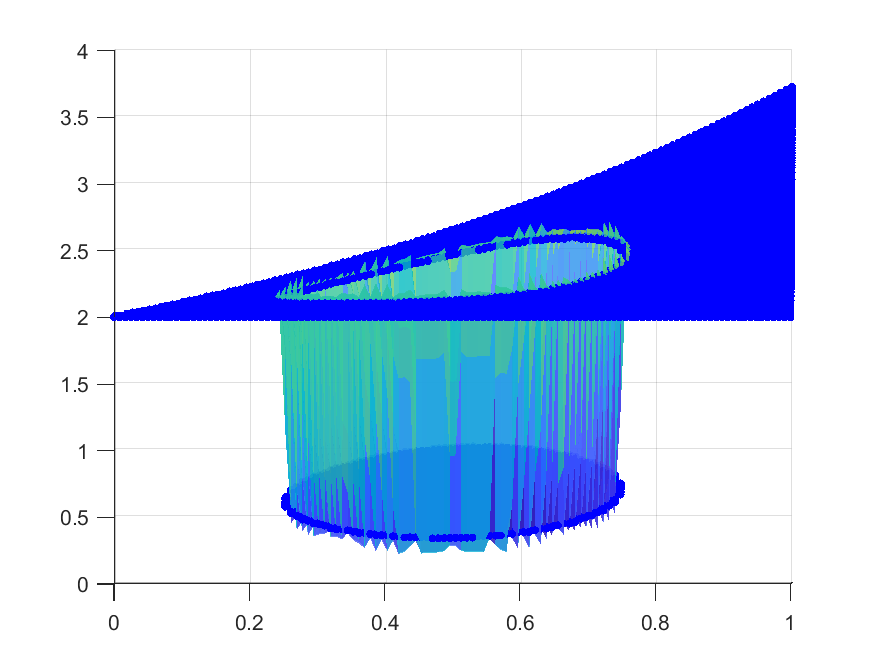}  & \includegraphics[width=0.3\hsize]{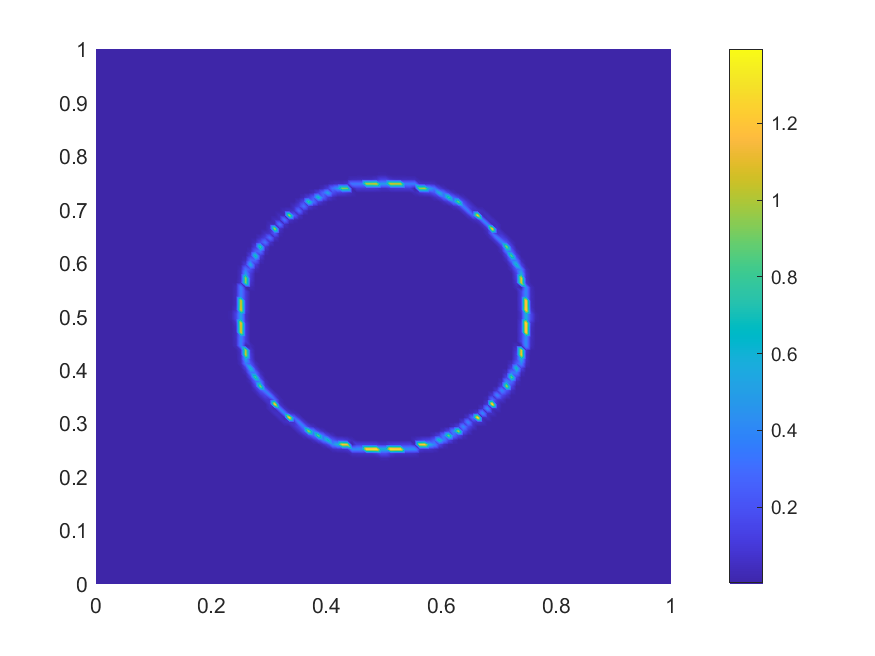}\\
\multicolumn{3}{c}{\scriptsize{{\color{black} DDPU}-MLS}}\\
         \includegraphics[width=0.3\hsize]{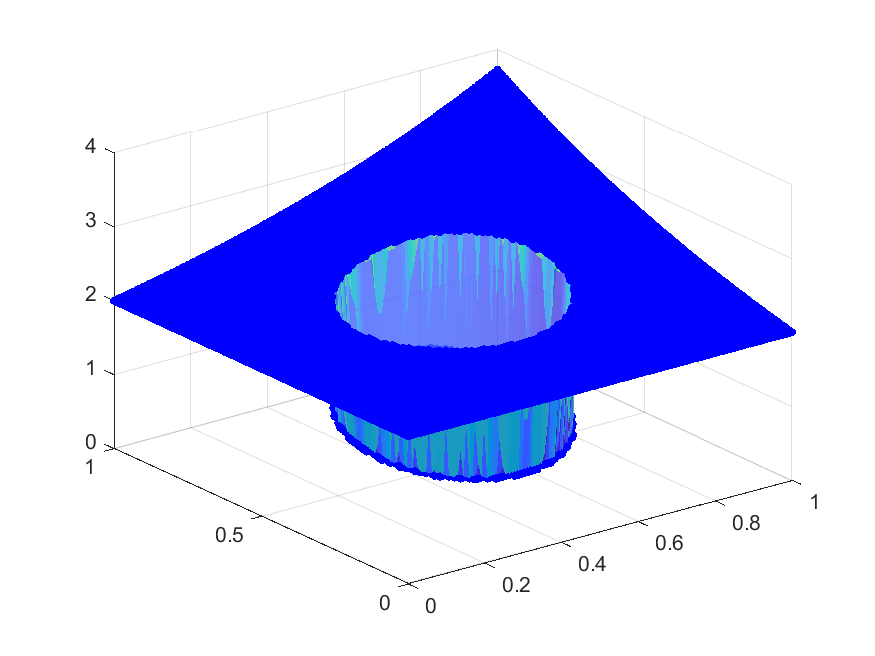} & \includegraphics[width=0.3\hsize]{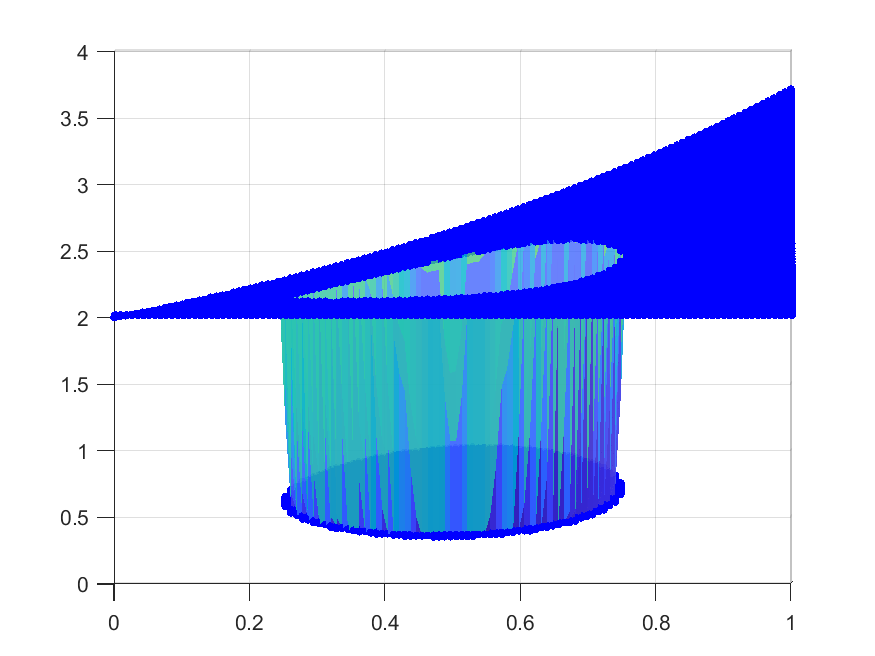} & \includegraphics[width=0.3\hsize]{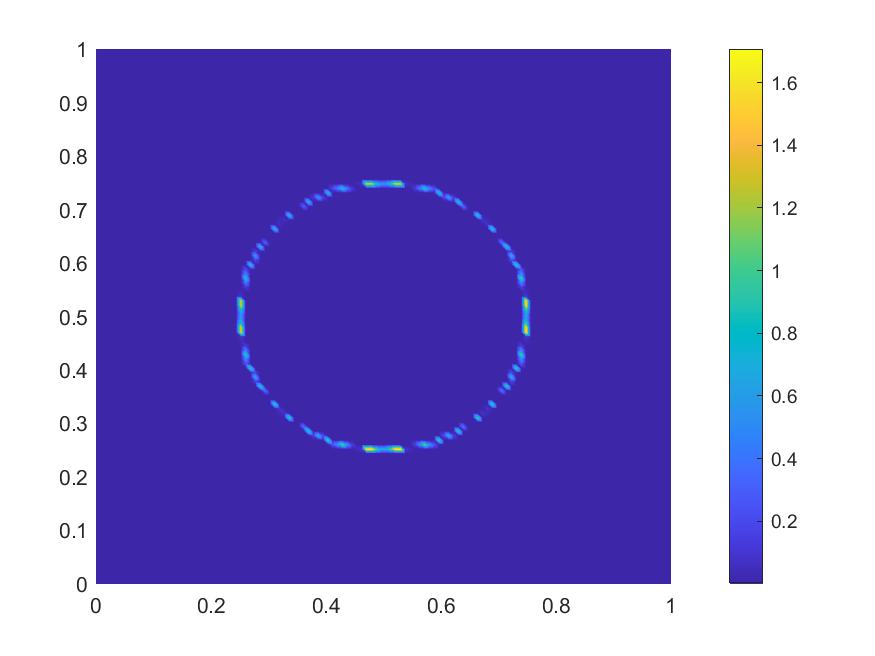}\\
 \end{tabular}
 \caption{\scriptsize{Approximation of the function $h$, Eq. \eqref{Trig2}, using PU-MLS and {\color{black} DDPU}-MLS with the Wendland $\mathcal{C}^4$ function. The second column shows rotated views of the plots in the first column. The third column shows the errors between the original function and its approximation.}}
 \label{example3W4}
 \end{figure}

   \begin{figure}[H]
\centering
    \begin{tabular}{ccc}
\multicolumn{3}{c}{\scriptsize{PU-MLS}}\\
         \includegraphics[width=0.3\hsize]{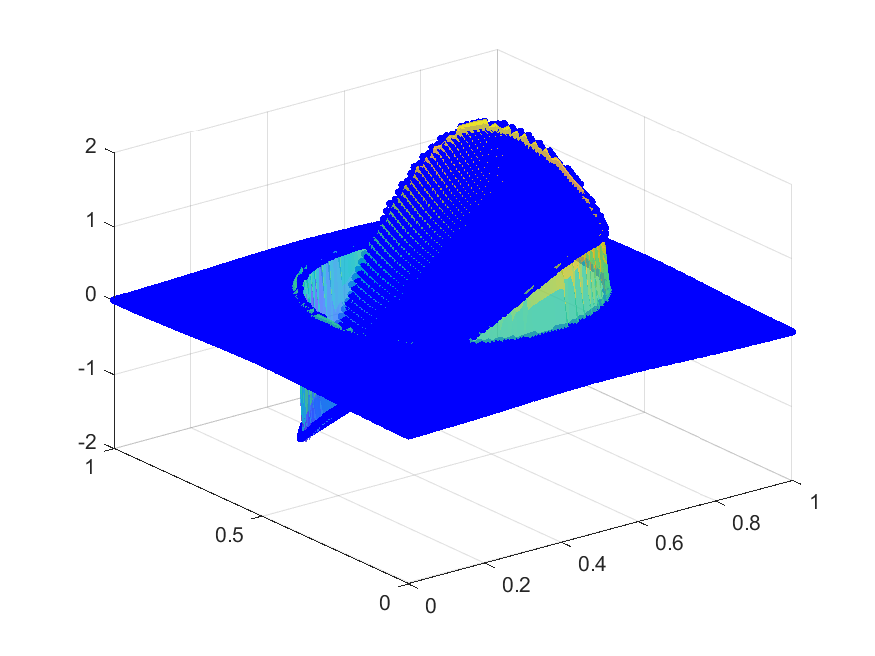} & \includegraphics[width=0.3\hsize]{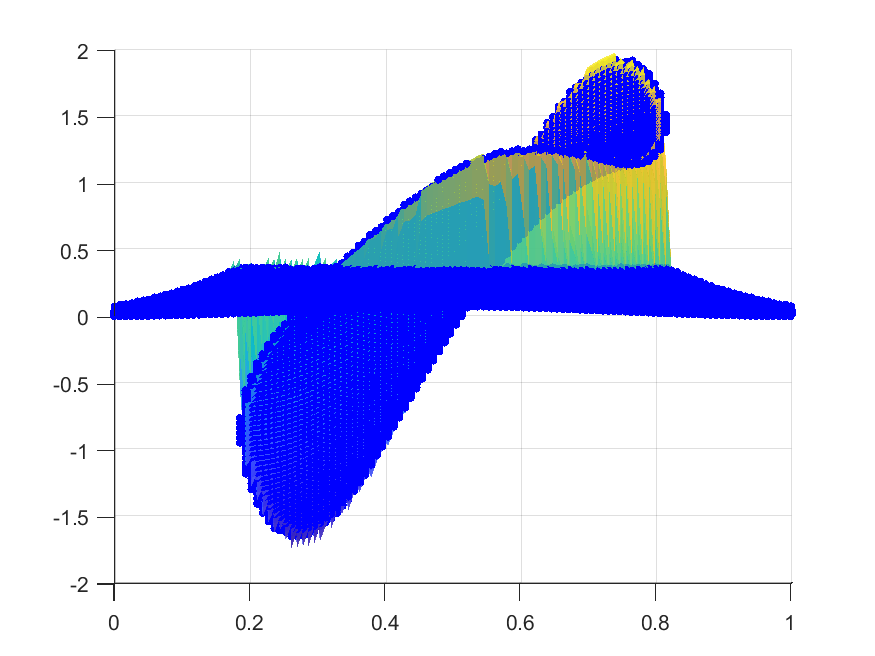} & \includegraphics[width=0.3\hsize]{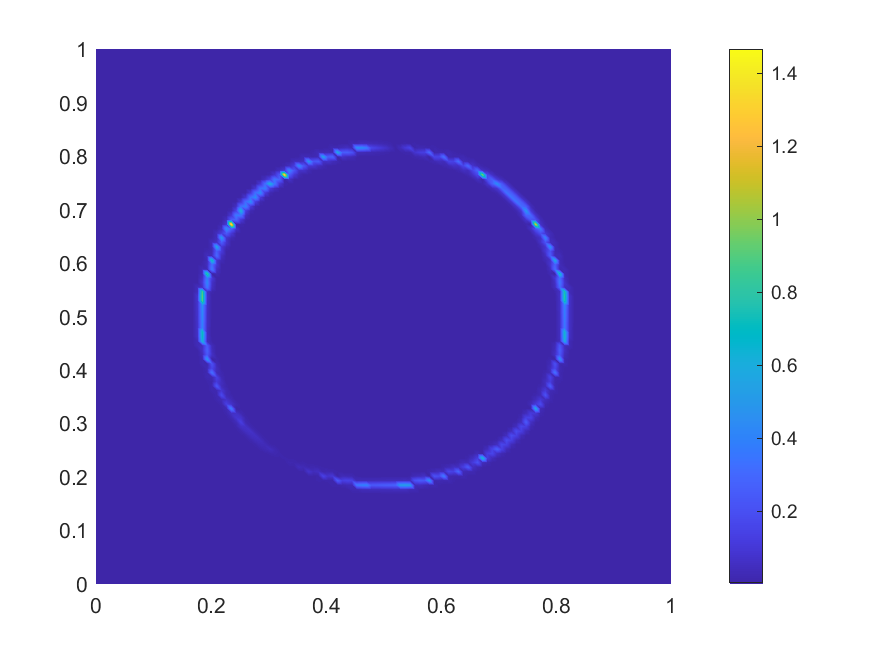}\\
\multicolumn{3}{c}{\scriptsize{{\color{black} DDPU}-MLS}}\\
         \includegraphics[width=0.3\hsize]{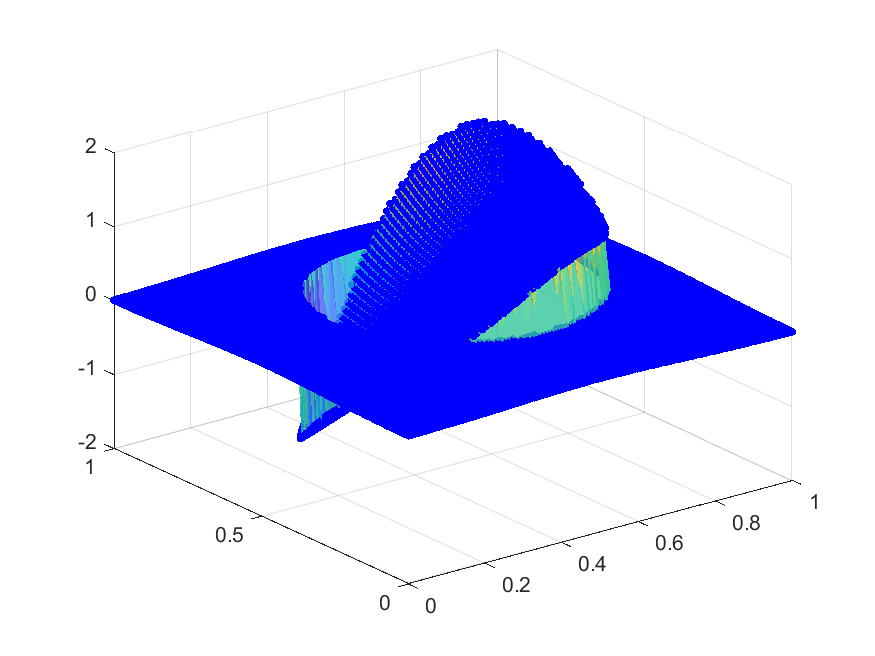} & \includegraphics[width=0.3\hsize]{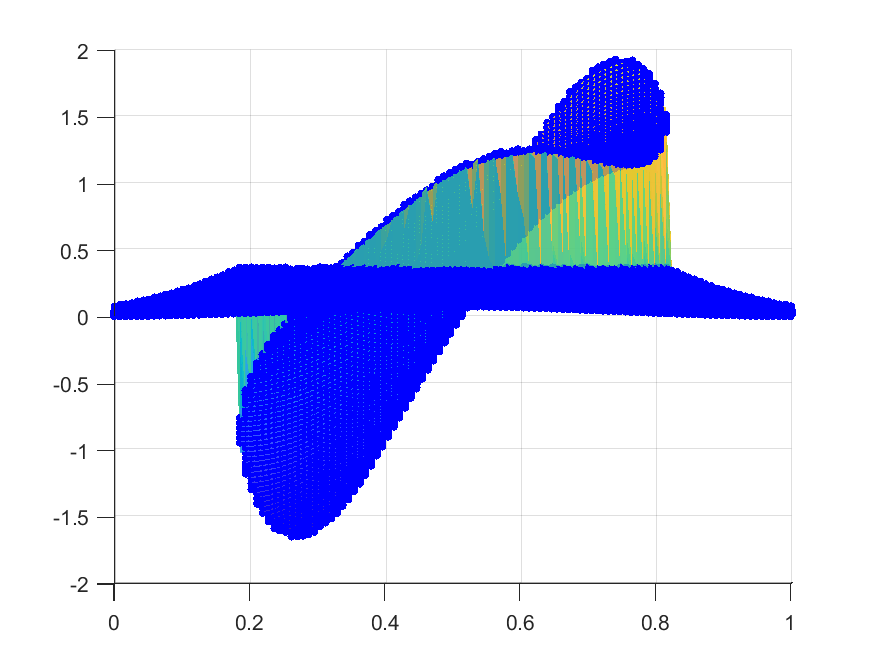} & \includegraphics[width=0.3\hsize]{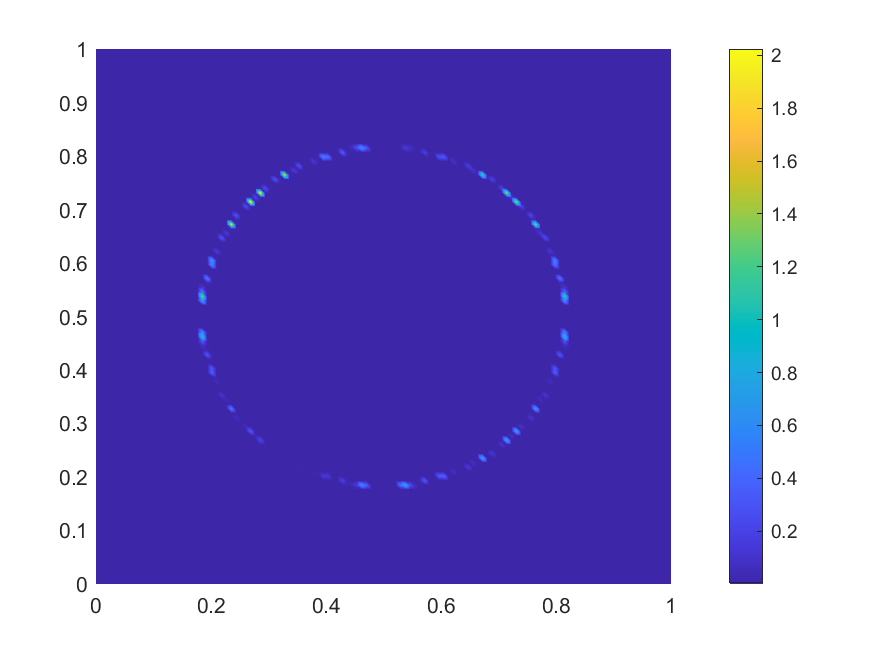}\\
 \end{tabular}
 \caption{\scriptsize{Approximation of the function $j$, Eq. \eqref{Trig3}, using PU-MLS and {\color{black} DDPU}-MLS with the Wendland $\mathcal{C}^2$ function. The second column shows rotated views of the plots in the first column. The third column shows the errors between the original function and its approximation.}}
 \label{example4W2}
 \end{figure}

 \begin{figure}[H]
\centering
    \begin{tabular}{ccc}
\multicolumn{3}{c}{\scriptsize{PU-MLS}}\\
         \includegraphics[width=0.3\hsize]{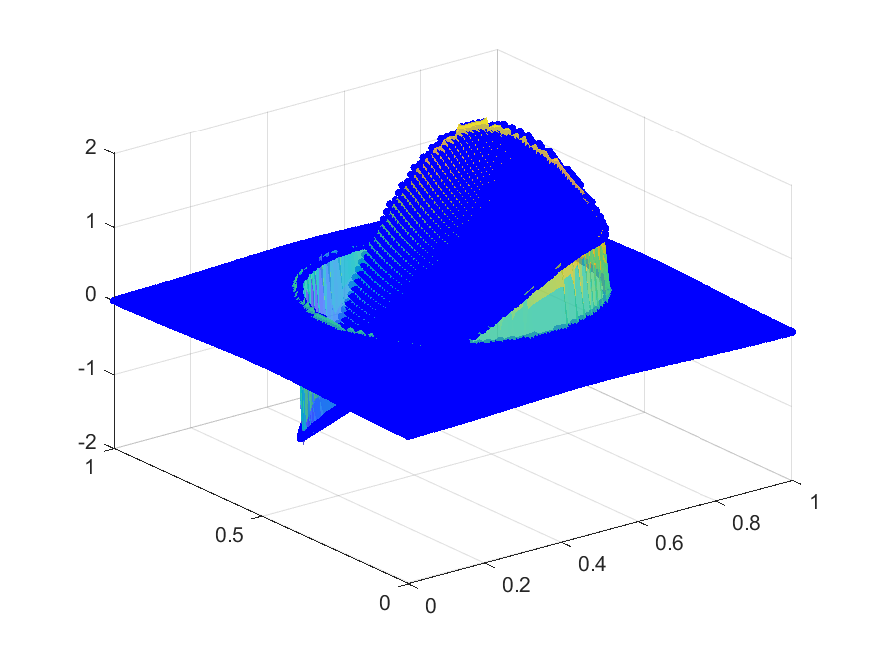} & \includegraphics[width=0.3\hsize]{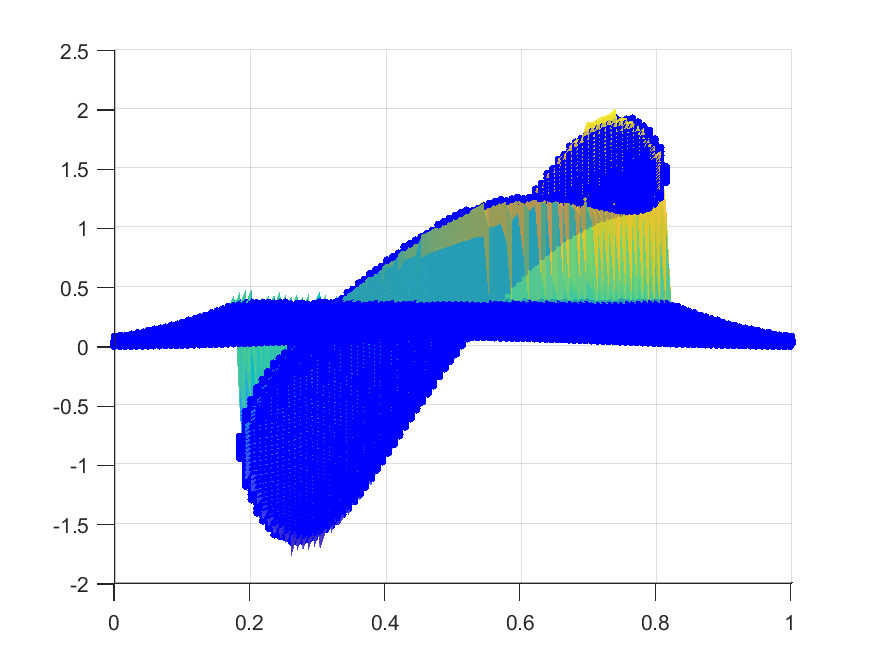}  & \includegraphics[width=0.3\hsize]{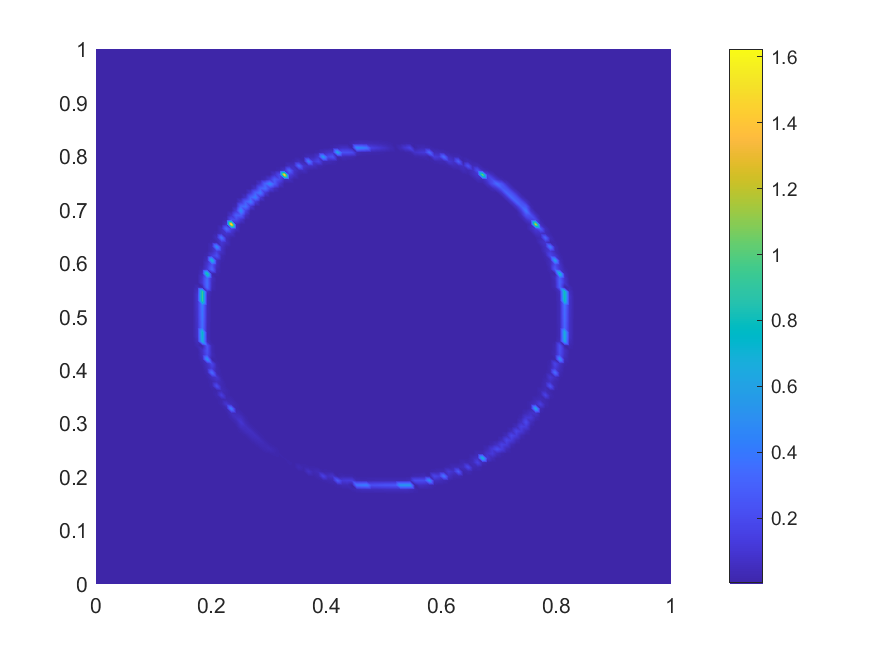}\\
\multicolumn{3}{c}{\scriptsize{{\color{black} DDPU}-MLS}}\\
         \includegraphics[width=0.3\hsize]{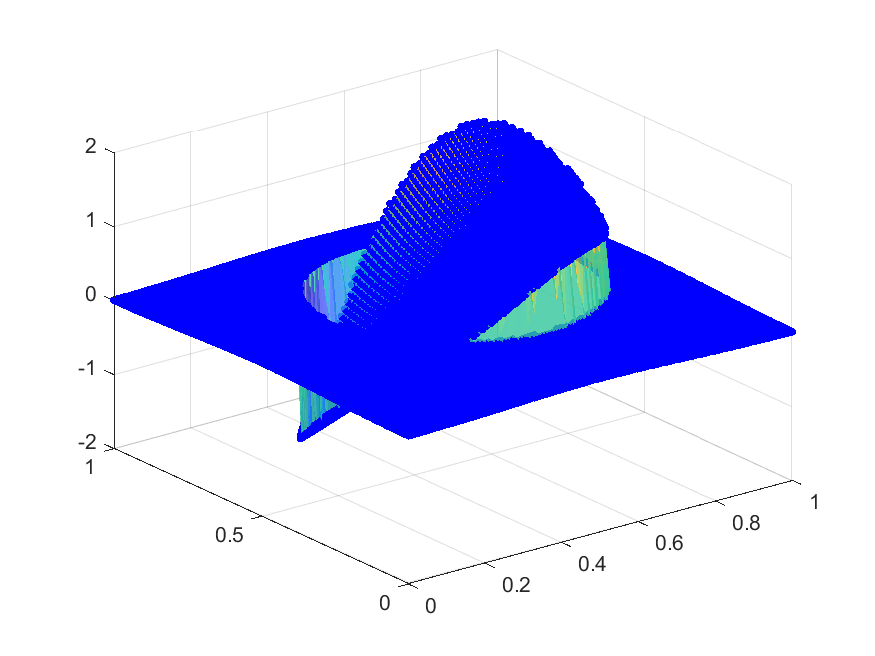} & \includegraphics[width=0.3\hsize]{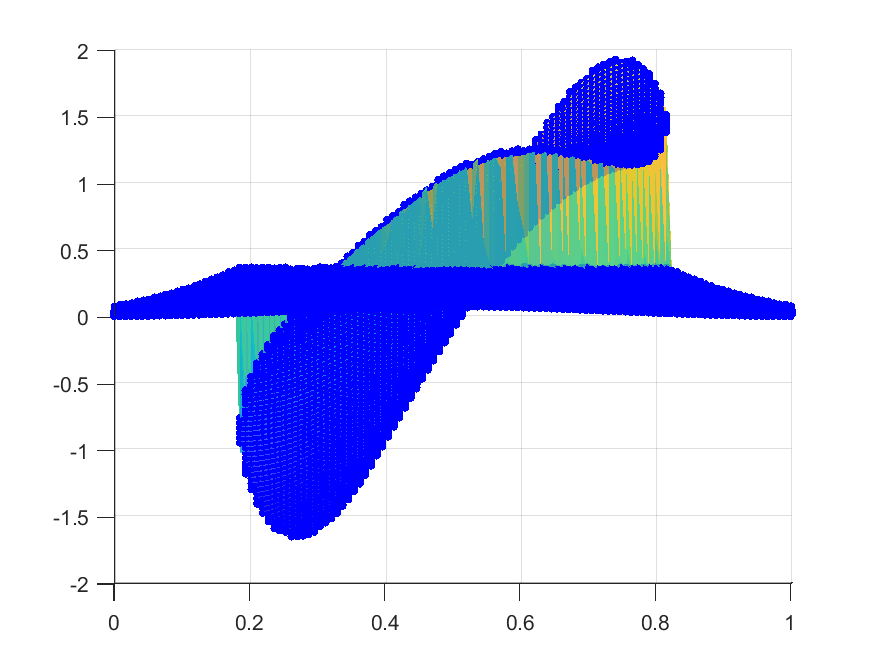} & \includegraphics[width=0.3\hsize]{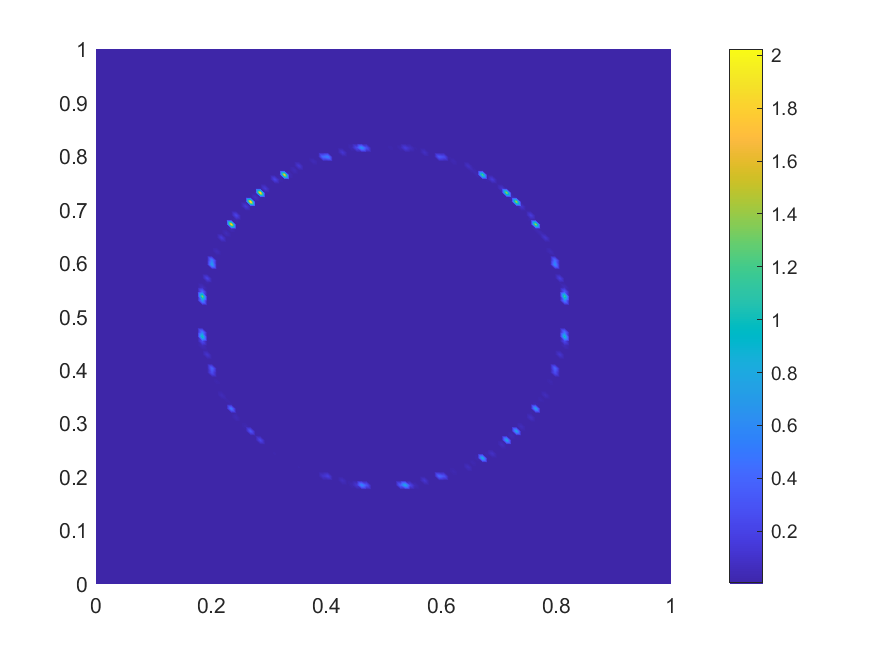}\\
 \end{tabular}
 \caption{\scriptsize{Approximation of the function $j$, Eq. \eqref{Trig3}, using PU-MLS and {\color{black} DDPU}-MLS with the Wendland $\mathcal{C}^4$ function. The second column shows rotated views of the plots in the first column. The third column shows the errors between the original function and its approximation.}}
 \label{example4W4}
 \end{figure}



\end{document}